\newtheorem{thm}{Theorem}[section]
\newtheorem{prop}[thm]{Proposition}
\newtheorem{lem}[thm]{Lemma}
\newtheorem{cor}[thm]{Corollary}
\newtheorem{asm}{Assumption}
\theoremstyle{remark}
\newtheorem{rem}[thm]{Remark}
\theoremstyle{definition}
\newtheorem{defn}{Definition}
\newcommand{\ra}{\rightarrow}
\newcommand{\Ra}{\Rightarrow}
\newcommand{\Q}{\mathbb Q}     
\newcommand{\R}{\mathbb R}     
\newcommand{\Z}{\mathbb Z}     
\renewcommand{\a}{\alpha}
\renewcommand{\b}{\beta}
\renewcommand{\d}{\delta}
\renewcommand{\th}{\theta}
\renewcommand{\k}{\kappa}
\newcommand{\e}{\varepsilon}
\newcommand{\del}{\partial}
\renewcommand{\l}{\lambda}
\renewcommand{\L}{\Lambda}
\newcommand{\s}{\sigma}
\newcommand{\bigo}{\mathcal{O}}
\newcommand{\fl}[1]{\lfloor #1 \rfloor}  
\newcommand{\ind}[1]{ \mathbf{1}_{ \{ #1 \} } } 
\newcommand{\w}{\omega}              
\renewcommand{\P}{\mathbb{P}}        
\newcommand{\E}{\mathbb{E}}          
\newcommand{\vp}{\mathrm{v}_0}       
\newcommand{\tw}{\tilde\omega}
\newcommand{\be}{\begin{equation}}
\newcommand{\ee}{\end{equation}}
\DeclareMathOperator{\Var}{Var}   \DeclareMathOperator{\Cov}{Cov}  
\DeclareMathOperator{\supp}{supp}
\def\Pv{\mathbf{P}}  \def\Ev{\mathbf{E}}  \def\Varv{\mathbf{Var}} \def\Covv{\mathbf{Cov}} 
\title[Hydrodynamic limits for independent RWRE]{Hydrodynamic limit for a system of independent, sub-ballistic random walks in a common random environment}
\author{Milton Jara}
\address{Milton Jara\\IMPA\\Estrada Dona Castorina, 110\\Horto, Rio de Janeiro\\Brazil}
\email{mjara@impa.br}
\author{Jonathon Peterson}
\address{Jonathon Peterson\\Purdue University\\Department of Mathematics\\150 N University Street\\West Lafayette, IN  47907\\USA}
\email{peterson@math.purdue.edu}
\urladdr{http://www.math.purdue.edu/~peterson}
\thanks{J. Peterson was partially supported by NSA grant H98230-13-1-0266.}
\subjclass[2010]{Primary: 60K35 ; Secondary: 60K37}
\keywords{Hydrodynamic limit, random walk in random environment, directed traps}
\date{\today}
\begin{document}

\begin{abstract}
We consider a system of independent random walks in a common random environment. Previously, a hydrodynamic limit for the system of RWRE was proved under the assumption that the random walks were transient with positive speed \cite{pRWRESystem}. In this paper we instead consider the case where the random walks are transient but with a sublinear speed of the order $n^\kappa$ for some $\kappa \in (0,1)$
and prove a quenched hydrodynamic limit for the system of random walks with time scaled by $n^{1/\k}$ and space scaled by $n$. The most interesting feature of the hydrodynamic limit is that the influence of the environment does not average out under the hydrodynamic scaling; that is, the asymptotic particle density depends on the specific environment chosen. 
The hydrodynamic limit for the system of RWRE is obtained by first proving a hydrodynamic limit for a system of independent particles in a directed trap environment. 
\end{abstract}

\maketitle

\section{Introduction}

\subsection{Overview and significance of the main results}

In this section we give an informal discussion of the main results of this article in terms of a simple model. More rigorous definitions and a more general setting can be found in subsequent sections.

Let us consider the following simple model of a random walk in random environment. We have two types of unfair coins, $A$ and $B$. Coins of type $A$ have heads probability $p >\frac{1}{2}$ and coins of type $B$ have heads probability $q < \frac{1}{2}$. Initially we associate to each site of the lattice $\mathbb{Z}$ a coin of one of the types $A$ and $B$ with equal probability. Then we run a simple random walk in the following way. When the walk is at site $x$, it flips the coin associated to $x$. It moves to the right if the coin gets heads and to the left if the coin gets tails. Let us call $(X_n)_{n \geq 0}$ the Markov chain obtained in this way. It will be useful to introduce the parameters $r = \frac{1-p}{p}$ and $s=\frac{1-q}{q}$. In \cite{sRWRE}, it is proved that $(X_n)_{n \geq 1}$ is transient to the right as soon as $p+q >1$ (that is, $rs<1$), and that the asymptotic speed of the chain is equal to zero if $r+s \geq 2$, see \eqref{XnLLN} below. 
When the walk is transient to the right with asymptotic speed zero (i.e., $rs<1$ and $r+s\geq 2$), there exists a parameter $\kappa \in (0,1)$ such that $ r^\kappa + s^\kappa =2$. If $\frac{\log r}{\log s}$ is irrational (the so-called {\em non-lattice condition}), the scaling limit
\[
\lim_{n \to \infty} \frac{X_{n^{1/\kappa}t}}{n}
\]
exists in the weak quenched sense, see \cite{dgWQL,estzWQL,psWQLTn,psWQLXn}. The scaling limit is a process we call {\em directed trap process}. In one sentence, the directed trap process is a Markov process in $\mathbb{R}$ generated by the (random) operator $\frac{d}{d \s}$, where $\s$ is a realization of a $\kappa$-stable subordinator. The random subordinator $\s$ can be understood as the weak scaling limit of the random environment. Our aim in this article is to answer the following question: what can we say about a system of independent random walks following the evolution described above on a common random environment? We will prove that the {\em hydrodynamic limit} of this system of particles is given by the PDE 
\begin{equation}\label{hdlequation}
\frac{\partial u}{\partial t} = -\frac{d u}{d \s },
\end{equation}
where the right side of \eqref{hdlequation} represents the operator $\frac{d}{d\s}$ applied to the function $x\mapsto u(t,x)$ for $t$ fixed. 
One of the striking features of this hydrodynamic limit is the dependence of the limiting equation on the random environment (that is, the operator $\frac{d}{d\s}$ appearing in the PDE is random). This features is not new in the literature; similar results have been proved in \cite{jltTrapHydro,fjlEPRC}, and our result is a natural follow-up. However, one important difference in terms of hydrodynamic limits with \cite{jltTrapHydro,fjlEPRC} is that our limiting equation is {\em hyperbolic}, while the equations in \cite{jltTrapHydro,fjlEPRC} are {\em diffusive}. This is not evident from the scaling exponents of the equations, but a closer look at the proofs of the hydrodynamic limits in \cite{jltTrapHydro,fjlEPRC} reveal the use of tools commonly used in the proof of diffusive hydrodynamic limits. Somehow this is summarized by the self-adjointness of the operators $\frac{d}{dx} \frac{d}{d\s}$, $\frac{d}{d\s} \frac{d}{dx}$ with respect to suitable Hilbert spaces.
Since the limiting equation in the current paper is hyperbolic, the techniques from the previous papers do not seem applicable and we instead prove a hydrodynamic limit by a series of couplings with progressively simpler systems of particles. 

In proving the hydrodynamic limit of the system of independent particles, we need to establish various properties of the limiting equation \eqref{hdlequation} which are interesting in their own right. The point is that it is not
necessarily clear
in which sense the probability density function associated to the directed trap process satisfies the proposed hydrodynamic equation. This is a delicate point which is of independent interest and we devote a considerable part of this article to answer this question. The same problem is already present in \cite{jltTrapHydro,fjlEPRC} and the answer in these three cases (ours and theirs) are different.

\subsection{Transient random walks in random environment}

The main object of study in this paper is a system of independent one-dimensional random walks in a common random environment. 
First we recall the standard model of one-dimensional random walks in a random environment. 
In this model an \emph{environment} is a sequence $\w = \{\w_x\}_{x \in \Z} \in [0,1]^\Z =: \Omega$. 
For a fixed environment $\w$ and any $x \in \Z$, a random walk in the environment $\w$ started at $x$ is a Markov chain $(X_n)_{n\geq 0}$ with distribution $P_\w^x$ defined by 
\[
 P_\w^x(X_0 = x) = 1, \quad\text{and}\quad P_\w^x( X_{n+1}=z \, | \, X_n=y ) = 
\begin{cases}
 \w_y & \text{if } z=y+1 \\
 1-\w_y &  \text{if } z=y-1 \\
0 &\text{otherwise}. 
\end{cases}
\]
A RWRE is constructed by first choosing an environment $\w$ according to some probability measure $P$ on $(\Omega,\mathcal{F})$ (where $\mathcal{F}$ is the natural Borel $\s$-field) and then generating a random walk in the environment $\w$ as above. 
Oftentimes we will be interested in the case $X_0 = 0$ and so we will use the notation $P_\w$ instead of $P_\w^0$. 
The measure $P_\w$ of the random walk conditioned on the environment $\w$ is called the \emph{quenched} law of the RWRE. By averaging over the law $P$ on environments one obtains the \emph{averaged} law $\P(\cdot) = E_P[ P_\w(\cdot) ]$.  

A system of independent RWRE can be constructed as follows. First, let $\w$ be chosen according to the fixed measure on environments $P$. Then, for the fixed environment $\w$ we can let $\{(X^{x,j}_n)_{n\geq 0} \}_{x\in\Z,j\geq 1}$ be an independent family of random walks in the environment $\w$ such that $X^{x,j}_0=x$ for every $x\in \Z$ and $j\geq 1$. In a slight abuse of notation we will let $P_\w$ be the quenched joint distribution of all of the random walks. 
Also, at times we will be interested in the path of a single random walk started from $x$ and so we will use $X_n^x$ instead of $X_n^{x,1}$ in these cases. 

While the above construction enables us to start infintely many random walks at every point, we will be interested in the case where there are finitely many random walks started at every $x\in \Z$. 
We will let $\chi_0(x)$ denote the number of random walks initially started at location $x\in\Z$ and will refer to $\chi_0 = \{\chi_0(x)\}_{x\in\Z}$ as the \emph{initial configuration}. 
If we only follow the paths of these particles then
the configuration after $n$ steps $\chi_n = \{\chi_n(x)\}_{x\in\Z}$ is given by
\[
 \chi_n(x) = \sum_{y\in\Z} \sum_{j\leq \chi_0(y)} \ind{X_n^{y,j} = x}. 
\]

For a fixed environment $\w$, if the initial configuration of particles $\chi_0$ has distribution $\mu \in \mathcal{M}_1(\Z_+^\Z)$, where $\mathcal{M}_1(\Z_+^\Z)$ is the space of probability measures on $\Z_+^\Z$, then we will denote the quenched law of the system of particles by $P_\w^\mu$. In much of what follows below we will be interested in cases where the measure on initial configurations depends on the environment $\w$. That is, we will allow for $\mu = \mu(\w)$ to be a measurable function from $\Omega$ to the space $\mathcal{M}_1(\Z_+^\Z)$ equipped with the topology of weak convergence of probability measures.   
For such initial configurations depending on the environment it makes sense to define the averaged measure on the system of random walks by $\P^\mu(\cdot) = E_P[P_\w^{\mu(\w)}(\cdot)]$. 

The main goal of the current paper will be to prove a hydrodynamic limit theorem for the system of independent RWRE. 
In this paper we will make the following assumptions on the distribution $P$ on the environment $\w$. 
\begin{asm}\label{asmiid}
 The distribution $P$ on the environment is such that $\w = \{\w_x\}_{x\in\Z}$ is an i.i.d.\ sequence. 
\end{asm}
Many of the properties of RWRE can be stated in terms of the distribution of the statistic $\rho_x:=\frac{1-\w_x}{\w_x}$ of the environment. 
To this end, our second assumption is the following. 
\begin{asm}\label{asmt}
 $E_P[\log \rho_0] < 0$. 
\end{asm}
It is known that under Assumptions \ref{asmiid} and \ref{asmt} the RWRE are transient to the right \cite{sRWRE}. 
In addition, Solomon proved in \cite{sRWRE} the following result on the limiting speed of RWRE satisfying Assumptions \ref{asmiid} and \ref{asmt}. 
\begin{equation}\label{XnLLN}
 \P\left( \lim_{n\ra\infty} \frac{X_n}{n} = \vp \right) = 1, \quad \text{where} \quad \vp =  \begin{cases} \frac{1-E_P[\rho_0]}{1+E_P[\rho_0]} & \text{if } E_P[\rho_0] < 1 \\ 0 & \text{if } E_P[\rho_0] \geq 1. \end{cases} 
\end{equation}
In \cite{pRWRESystem}, the following hydrodynamic limit was proved under the additional assumption that the RWRE are ballistic 
(that is, $\vp>0$ or equivalently $E_P[\rho_0] < 1$).
\begin{thm}[Theorem 1.4 in \cite{pRWRESystem}]
 Let Assumptions \ref{asmiid} and \ref{asmt} hold, and additionally assume that $E_P[\rho_0] < 1$ so that the limiting speed $\vp>0$. 
Let $\mathcal{C}_0$ be the set of continuous functions on $\R$ with compact support. 
If there is a bounded function $u_0(x)$ and a sequence of initial configurations $\{\chi_0^n\}_{n\geq 1}$ such that 
\begin{equation}\label{oldhdl1}
 \lim_{n\ra\infty} \frac{1}{n} \sum_{x \in \Z} \chi_0^n(x) \phi(\tfrac{x}{n}) = \int_\R u_0(x) \phi(x) \, dx, \quad  \forall \phi \in \mathcal{C}_0, 
\end{equation}
then for any fixed $t>0$
\begin{equation}\label{oldhdl2}
  \lim_{n\ra\infty} \frac{1}{n} \sum_{x \in \Z} \chi_{nt}^n(x) \phi(\tfrac{x}{n}) = \int_\R u_0(x-t\vp) \phi(x) \, dx, \quad  \forall \phi \in \mathcal{C}_0. 
\end{equation}
The limits \eqref{oldhdl1} are either both almost sure limits under the averaged measure or limits in probability under the quenched measure (for almost every environment $\w$). 
\end{thm}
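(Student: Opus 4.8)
The plan is to reduce the theorem to a single first-moment estimate and then recover both asserted modes of convergence from the conditional independence of the walks. By linearity we fix $\phi\in\mathcal C_0$, with $\supp\phi$ bounded and $|\phi|\le M$. Expanding particles into their walks,
\[
\frac1n\sum_{x\in\Z}\chi_{\nt}^{n}(x)\,\phi\!\left(\tfrac xn\right)=\frac1n\sum_{y\in\Z}\sum_{j\le\chi_0^n(y)}\phi\!\left(\tfrac{X_{\nt}^{y,j}}{n}\right),
\]
and since a walk moves at most one unit per step, only the $N_n$ particles started in a window of size $O(n)$ around $\supp\phi$ can contribute; applying \eqref{oldhdl1} to a nonnegative bump dominating the indicator of a slightly larger interval shows $N_n=O(n)$. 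Conditionally on $\w$ the displayed sum is $1/n$ times a sum of $N_n$ independent $[-M,M]$-valued random variables, so its quenched variance is $O(1/n)$ and Hoeffding's inequality gives, for every $\w$, a bound of the form $2\exp(-cn\e^2)$ on the quenched probability that it deviates from its quenched mean by more than $\e$. Hence it suffices to prove that, for $P$-a.e.\ $\w$,
\begin{equation}\label{pf:fm}
\frac1n\sum_{y\in\Z}\chi_0^n(y)\,E_{\theta^{y}\w}\!\left[\phi\!\left(\tfrac{y+X_{\nt}}{n}\right)\right]\ \longrightarrow\ \int_{\R}u_0(x-t\vp)\,\phi(x)\,dx,
\end{equation}
where $\theta^y$ is the shift $(\theta^y\w)_x=\w_{x+y}$ and $X$ under $P_{\theta^y\w}$ starts at $0$ (the sum in \eqref{pf:fm} is effectively finite, since the terms with $|y|$ large vanish deterministically): the quenched in-probability limit then follows from \eqref{pf:fm} by Chebyshev, and the averaged almost sure limit follows from \eqref{pf:fm} via the Hoeffding bound and Borel--Cantelli, a countable dense family of test functions handling all $\phi$ at once.

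To prove \eqref{pf:fm} the key ingredient is a \emph{uniform quenched law of large numbers}: for $P$-a.e.\ $\w$ and every $K>0$,
\[
\lim_{n\to\infty}\ \sup_{|y|\le Kn}\ P_{\theta^{y}\w}\!\left(\left|\tfrac{X_{\nt}}{n}-t\vp\right|>\e\right)=0\qquad\text{for every }\e>0.
\]
Granting this, the uniform continuity of $\phi$ lets one replace $\phi\bigl((y+X_{\nt})/n\bigr)$ by $\phi\bigl(y/n+t\vp\bigr)$ inside \eqref{pf:fm} at the cost of an error $O(\omega_\phi(\e))+o(1)$ (using $N_n=O(n)$ and the bound $M$ for the atypical particles), after which \eqref{pf:fm} is precisely \eqref{oldhdl1} applied to the compactly supported test function $x\mapsto\phi(x+t\vp)$, since $\int u_0(x)\phi(x+t\vp)\,dx=\int u_0(x-t\vp)\phi(x)\,dx$; letting $\e\downarrow0$ finishes the proof. (If one only assumes bounded occupation numbers, the weaker statement that the density of ``slow'' starting points $y$ tends to $0$ would suffice just as well.)

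The genuinely substantial step, and the main obstacle, is the uniform quenched LLN: the quenched LLN of \cite{sRWRE} gives the limit for each fixed shift $\theta^y\w$, but here $y$ ranges over a set growing linearly in $n$, so a priori the weighted average in \eqref{pf:fm} could be driven by shifts for which the LLN has not yet taken effect at time $\nt$. I would attack this through the hitting-time representation: for a walk from $0$, the quenched crossing times $\gamma_i:=T_{i+1}-T_i$ are independent given $\w$ with means $\mu_i(\w)=E_\w[\gamma_i]$ measurable with respect to $\{\w_x:x\le i\}$, and the ballisticity hypothesis $E_P[\rho_0]<1$ is exactly what makes $E_P[\mu_0]=1/\vp<\infty$. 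Transience reduces $\{|X_{\nt}/n-t\vp|>\e\}$ to upper and lower deviations of the crossing times $T_{\lceil(t\vp\pm\e)n\rceil}^{(y)}$, so it is enough to show that $\tfrac1n T^{(y)}_{\lfloor cn\rfloor}\to c/\vp$ in $P_{\theta^y\w}$-probability uniformly over $|y|\le Kn$, for each fixed $c>0$; the monotonicity of $c\mapsto T^{(y)}_{\lfloor cn\rfloor}$ together with the continuity of the limit reduces this to a countable set of values of $c$ by a Dini-type argument. What remains is a uniform ergodic/maximal estimate for the stationary sequence $(\mu_i(\w))$ — that $\tfrac1n\sum_{i=y}^{y+\lfloor cn\rfloor}\mu_i(\w)\to c/\vp$ uniformly in $y\in[-Kn,Kn]$ — together with a uniform Chebyshev control of the fluctuations $\gamma_i-\mu_i$. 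This is the delicate point, since even in the ballistic regime $\mu_0$ need not possess moments beyond the first; one remedy is to run the argument instead through the regeneration structure of one-dimensional RWRE, which is ballistic precisely when the regeneration time has finite mean, and for which the uniformity over $|y|\le Kn$ follows from a union bound over the $O(n)$ shifts. Assembling this uniform law of large numbers, rather than any of the soft steps above, is where I expect the real work to lie; the residual transience corrections — particles that have reached a level but temporarily slipped back below it by time $\nt$ — are routine, because the positive quenched speed means each level is occupied for only $O(\e n)$ units of time.
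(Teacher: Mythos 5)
First, a remark on scope: the present paper does not prove this statement at all --- it is quoted verbatim from \cite{pRWRESystem} --- so the only comparison available is with the proof in that reference. Your overall architecture (reduce to the quenched first moment by conditional independence plus Hoeffding/Chebyshev concentration, then reduce the first-moment limit to a law of large numbers for a single walker that is uniform over starting points $|y|\leq Kn$, and finally feed \eqref{oldhdl1} the shifted test function $x\mapsto\phi(x+t\vp)$) is the standard route for hydrodynamics of independent particles and is in the spirit of the cited proof, which likewise works through hitting-time decompositions between levels. The soft steps (restriction to an $O(n)$ window, $N_n=O(n)$ from \eqref{oldhdl1}, exponential quenched concentration, countable family of test functions for the a.s.\ statement) are fine.

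The genuine gap is exactly where you place the ``real work'': the uniform quenched LLN is asserted, not proved, and the one concrete mechanism you offer for the uniformity --- ``a union bound over the $O(n)$ shifts'' via the regeneration structure --- does not go through under the stated hypotheses. A union bound over $\sim n$ starting points needs per-site deviation probabilities that are $o(n^{-1})$. Under only $E_P[\rho_0]<1$ the tail parameter $\kappa$ (when it exists) can lie in $(1,2]$, and in that regime the annealed slowdown probability $\P(X_n<(\vp-\e)n)$ decays only polynomially, of order $n^{1-\kappa}$, which is \emph{not} $o(n^{-1})$; so any union bound must be run with quenched, not annealed, deviation estimates, and those require quantitative input (quenched slowdown bounds \`a la Gantert--Zeitouni, or moment/tail estimates for $E_\w[T_1]$ and $\Var_\w(T_1)$) that you neither state nor derive. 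Your alternative route through crossing times has the same soft spot: the control of the means $\mu_i(\w)$ is indeed easy (a windowed sum is a difference of two partial sums, so the strong LLN for the stationary sequence $\mu_i$ with $E_P[\mu_0]=1/\vp<\infty$ gives the uniform statement), but the ``uniform Chebyshev control of the fluctuations $\gamma_i-\mu_i$'' is not routine, because $E_P[\Var_\w(T_1)]=\infty$ whenever $\kappa\leq 2$; one needs a truncation of the quenched variances (or of the crossing times) together with tail bounds such as $x^{1/2}P(\Var_\w(T_1)>x)\to 0$, which do hold under ballisticity but must be proved and then turned into an almost-sure (in $\w$) statement to match the quenched mode claimed in the theorem. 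As written, the proposal establishes the reduction correctly but leaves the central lemma --- the uniform-in-$y$ quenched LLN for $P$-a.e.\ $\w$ --- without a valid proof, and the shortcut suggested for it is false at the stated level of generality.
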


In this paper we will be interested in studying systems of RWRE that are transient but with a sublinear speed (i.e., $\vp= 0$). As noted in \eqref{XnLLN} above, RWRE that are transient to the right have $\vp=0$ if and only if $E_P[ \rho_0 ] \geq 1$, but for our results we will need to assume the following slightly stronger assumption. 
\begin{asm}\label{asmk}
There exists $\k \in (0,1)$ such that $E_P[ \rho_0^\k ] = 1$.
\end{asm}
We will also need the following technical assumptions. 
\begin{asm}\label{asmnl}
 $\log \rho_0$ is a non-lattice random variable under the distribution $P$ on environments, and $E_P[\rho_0^\k \log \rho_0] < \infty$ where $\k$ is the parameter from Assumption \ref{asmk}. 
\end{asm}

\begin{rem}
 For RWRE that are transient to the right (i.e., $E_P[\log \rho_0] < 0$) the parameter $\k>0$ defined by $E_P[\rho_0^\k] = 1$ plays a major role in many of the known results (note that in some of these results the parameter $\k\geq 1$). For instance, $\k$ determines the rates of decay of both the averaged and quenched large deviation slowdown probabilities \cite{dpzTE,gzQSubexp}, and in the limiting distributions proved in \cite{kksStable} both the scaling and the type of limiting distribution are determined by the parameter $\k$. 
For instance, Assumptions \ref{asmiid}--\ref{asmnl} imply that 
\[
 \lim_{n\ra\infty} \P\left( \frac{X_n}{n^\kappa} \leq x \right) = 1- L_\kappa(x^{-1/\k}), \quad \forall x>0, 
\]
where $L_\kappa$ is the distribution function for a totally skewed to the right $\k$-stable random variable. 
\end{rem}

\begin{rem}
The technical conditions in Assumption \ref{asmnl} are needed to obtain certain precise tail asymptotics that are needed for our results. These technical conditions were also needed for the averaged limiting distributions of transient RWRE in \cite{kksStable} and also more recently for the results on the weak quenched limiting distributions in \cite{dgWQL,estzWQL,psWQLTn,psWQLXn}. 
\end{rem}


\subsection{Hydrodynamic limits for systems of RWRE}

For any environment $\w$, define 
\begin{equation}\label{gdef}
 g_\w(x) = \frac{1}{\w_x}\left( 1 + \rho_{x+1} + \rho_{x+1}\rho_{x+2} + \rho_{x+1}\rho_{x+2}\rho_{x+3}+\cdots\right). 
\end{equation}
If the distribution on environments satisfies Assumptions \ref{asmiid} and \ref{asmt}, then it can be shown that $g_\w(x)  = E_\w\left[ \sum_{n\geq 0} \ind{X^x_n = x} \right]$ for $P$-a.e.\ environment $\w$ - that is, $g_\w(x)$ is the expected number of times the random walk started at $x$ visits $x$ (including the visit at time 0). 
The functions $g_\w$ are useful for constructing stationary distributions for the systems of independent RWRE in the environment $\w$. 
To this end, for any $\a>0$ let $\bar\mu_\a(\w) \in \mathcal{M}_1(\Z_+^\Z)$ be defined by 
\[
\bar\mu_\a(\w) = \bigotimes_{x \in \Z} \text{Poisson}( \a g_\w(x) ). 
\]
It was shown in \cite{pRWRESystem} (see also \cite{clEPRE}) that $\bar\mu_\a(\w)$ is a stationary distribution for the system of RWRE for any $\a>0$. That is, 
\[
 P_\w^{\bar\mu_\a(\w)}( \eta_n \in \cdot ) = P_\w^{\bar\mu_\a(\w)}( \eta_0 \in \cdot) = \bar\mu_\a(\w)(\cdot), \quad \forall n\geq 0,
\]
for $P$-a.e.\ environment $\w$. 

The hydrodynamic limit proved in this paper will describe the behavior of the system of particles when the initial configurations are what may be considered ``locally stationary''. 
For a continuous function $u:\R \ra [0,\infty)$ and $\w\in \Omega$ and $n\geq 1$ fixed let $\mu_u^{n}(\w) \in \mathcal{M}_1(\Z_+^\Z)$ be the measure on configurations given by 
\begin{equation}\label{rwlocstat}
 \mu_u^{n}(\w) = \bigotimes_{x\in\Z} \text{Poisson}\left( u(\tfrac{x}{n}) g_\w(x)  \right).
\end{equation}
We call these configurations locally stationary because the distribution of the configuration in a small neighborhood of $x \approx yn$ is approximately the same under $\mu_u^{n}(\w)$ and $\bar\mu_{u(y)}(\w)$. 

If $E_P[\rho_0] < 1$ (equivalently $\k > 1$), then it follows from Assumption \ref{asmiid} that $E_P[g_\w(x)] = \frac{1+E_P[\rho_0]}{1-E_P[\rho_0]} = \vp^{-1} < \infty$. 
From this it is easy to see that if $\chi_0^n$ is a sequence of initial configurations with $\chi_0^n \sim \mu_u^n(\w)$, then the configurations $\chi_0^n$ satisfy the condition \eqref{oldhdl1} for the hydrodynamic limit when $E_P[\rho_0] < 1$ (if $\k>1$ then the limit holds in probability with respect to the quenched measure; if $\k>2$ then it can be shown that the limit holds almost surely with respect to the averaged measure). 
Informally, this says that by scaling space by $n$ and giving each particle a mass of $n^{-1}$, the empirical distribution of particles converges to the deterministic measure $\vp^{-1} u(x) \, dx$. 

If $\k \leq 1$ then $E_P[g_\w(x)] = \infty$, and so one can no longer hope for condition \eqref{oldhdl1} to hold when the initial configurations have distribution $\mu_u^n(\w)$. 
However, it will follow from our main results below that when $\k \in (0,1)$, 
\begin{equation}\label{weakiclim}
\lim_{n\ra\infty} \P^{\mu_u^n}\left( \frac{1}{n^{1/\k}} \sum_x \chi_0(x) \phi(x/n) \in \cdot \right) = \Pv\left( \int_\R u(x) \phi(x) \, \s_W(dx) \in \cdot \right), \quad \forall \phi\in \mathcal{C}_0, 
\end{equation}
where $\s_W=\{\s_W(x)\}_{x\in \R}$ is a two-sided $\k$-stable subordinator with distribution $\Pv$.
\begin{rem}
In a slight abuse of notation, we will use $\s_W(x)$ to denote a non-decreasing c\`adl\`ag function of $x$ with $\s_W(0) = 0$ and will use $\s_W(dx)$ to denote the corresponding measure on $\R$ given by $\s_W((a,b]) = \s_W(b)-\s_W(a)$. 
We explain briefly the notation $\s_W$ used here and in the rest of the paper. Under the measure $\Pv$, $W$ is non-homogeneous Poisson point process on $\R\times (0,\infty)$ with intensity measure $\l y^{-\k-1} \, dx\,dy$ for some $\l>0$ and then the measure $\s_W(dx)$ is defined by $\s_W(A) = \iint_{A\times (0,\infty)} y \, W(dx \, dy)$. 
\end{rem}
\begin{rem}
Note that \eqref{weakiclim} differs from \eqref{oldhdl1} in two respects. First of all, the necessary scaling gives mass $n^{-1/\k}$ to each particle, and secondly the limiting scaled empirical measure is a \emph{random} measure $u(x) \, \s_W(dx)$ instead of a deterministic measure. 
\end{rem}
\begin{rem} 
A proof of \eqref{weakiclim} can be obtained with a little bit of work using results from \cite{dgWQL}. However, we will not give this argument here since this will also follow from the proof of our main result below where we will show a similar convergence for the empirical measure of the system of random walks as they evolve. 
\end{rem}

Our main result in this paper is a hydrodynamic limit for systems of RWRE when $\k \in (0,1)$. 
\begin{thm}\label{th:RWREhdl}
 Let Assumptions \ref{asmiid}--\ref{asmnl} hold with $\k \in (0,1)$, and let $u$ be a nonnegative, continuous function with compact support on $\R$. 
Then, for
any function $\phi(t,x)$ on $\R_+ \times \R$ that is continuous with compact support, 
\[
  \lim_{n\ra\infty} \P^{\mu_u^n}\left( \frac{1}{n^{1/\k}} \int \sum_x \chi_{tn^{1/\k}}(x) \phi(t, \tfrac{x}{n}) \, dt \in \cdot \right) = \Pv\left( \iint u_W(t,x) \phi(t,x) \, \s_W(dx)\, dt \in \cdot \right),
\]
where $\s_W$ is a two-sided $\k$-stable subordinator with distribution $\Pv$ and the function $u_W(t,x)$ 
satisfies $u_W(0,\cdot) \equiv u(\cdot)$ and 
\begin{equation}\label{PDEdtdW}
\begin{split}
 - \frac{\del}{\del t} u_W(t,x) &= 
\begin{cases}
 \lim_{h \ra 0^-} \frac{u_W(t,x+h)-u_W(t,x)}{ \s_W(x+h)-\s_W(x) } &  \text{if } t = 0 \\
 \lim_{h \ra 0} \frac{u_W(t,x+h)-u_W(t,x)}{ \s_W(x+h)-\s_W(x) } &  \text{if } t > 0, 
\end{cases}
,\qquad \forall x \in \mathcal{J}_W, 
\end{split}
\end{equation}
where $\mathcal{J}_W = \{x: \, \s_W(x) - \s_W(x-) > 0 \}$ is the set of the locations where $\s_W$ has a jump. 
\end{thm}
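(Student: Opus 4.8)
The plan is to transport the problem, through a sequence of couplings, to a system of independent particles in a \emph{directed trap environment}, prove the hydrodynamic limit there, and then separately analyze the limiting equation~\eqref{PDEdtdW}. For a sub-ballistic RWRE the walk advances by crossing a sequence of ``traps'' --- deep valleys of the potential $V(x)=\sum_{i\le x}\log\rho_i$ --- and under Assumptions~\ref{asmk}--\ref{asmnl} the time to cross the $k$th trap is, up to a negligible correction on the hydrodynamic time scale, an exponential variable whose (random) mean $\b_k$ is comparable to the exponential of the depth of that trap; Kesten's renewal theorem (which is where the non-lattice hypothesis and the moment condition of Assumption~\ref{asmnl} are used) gives $P(\b_0>t)\sim C t^{-\k}$, and the $\b_k$ are essentially i.i.d. I would first establish such a comparison for a single walk: there is a coupling of $(X^x_m)_{m\ge 0}$ with a ``trap walk'' that jumps along the trap locations $\nu_1<\nu_2<\cdots$, waiting a mean-$\b_k$ exponential time at $\nu_k$, under which the trajectories agree after dividing time by $n^{1/\k}$ and space by $n$; applied simultaneously to all particles (legitimate, since the walks are conditionally independent given $\w$) it couples the two \emph{systems}. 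The point process $\{(\nu_k/n,\ \b_k/n^{1/\k})\}_k$ converges to the Poisson point process $W$ of intensity $\l y^{-\k-1}\,dx\,dy$, so that $\tfrac{1}{n^{1/\k}}\sum_{\nu_k\le xn}\b_k\Rightarrow \s_W(x)$. The one step genuinely specific to the model is the treatment of the initial law $\mu_u^n(\w)=\bigotimes_x\mathrm{Poisson}(u(\tfrac xn)g_\w(x))$: one uses that $g_\w$ is large precisely near the bottoms of traps and that, summed over the $k$th trap block, $g_\w$ is of the same order as $\b_k$ (indeed the two give rise to the same limiting subordinator $\s_W$), so the initial configuration projected onto the trap locations converges to a product--Poisson field with intensity $u(x)\,\s_W(dx)$ --- this is also the content of~\eqref{weakiclim}.

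Next, fix a realization of $W$, hence of $\s_W$. Because the particles move independently from a product--Poisson field, for any continuous compactly supported $\phi$ the quantity $\tfrac{1}{n^{1/\k}}\int\sum_x\chi_{tn^{1/\k}}(x)\phi(t,\tfrac xn)\,dt$ has, conditionally on the environment, mean equal to the same expression with $\chi$ replaced by its single-particle expectation, and conditional variance $\bigo(n^{-1/\k})$ (a second-moment computation exploiting the product--Poisson structure). It therefore suffices to pass to the limit in the conditional mean, i.e.\ to show that the single-particle transition kernels of the rescaled trap walk converge to the transition semigroup $P^t_W(y,dx)$ of the limiting \emph{directed trap process}: the scaling limit of a single trap walk, $Y_t=\widetilde\s^{-1}(t)$, where $\widetilde\s$ is the $\k$-stable subordinator obtained from $\s_W$ by replacing each atom of size $m$ by an independent exponential of mean $m$ --- each particle using an independent $\widetilde\s$, all built from the common limiting environment $\s_W$, which is why the hydrodynamic limit ends up depending on $\s_W$ alone yet is nondegenerate (consistent with the stable limit law of $X_n/n^\k$). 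This identifies the limit in Theorem~\ref{th:RWREhdl} as $\iint u_W(t,x)\phi(t,x)\,\s_W(dx)\,dt$ with $u_W$ characterized by $\int u_W(t,x)f(x)\,\s_W(dx)=\int\big(\int f(x)\,P^t_W(y,dx)\big)u(y)\,\s_W(dy)$. Integrating over $t$ rather than fixing $t$ is what makes this last limit tractable: it replaces the empirical measure by an occupation measure and sidesteps the convergence of a dense atomic measure at a fixed time. The trap-system limit may internally require one or two further simplifications (erasing the inter-trap motion, replacing the trap locations by an exactly homogeneous array), but the essential inputs are single-particle convergence and a law of large numbers over particles.

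Finally one must verify that $u_W$ so defined satisfies $u_W(0,\cdot)\equiv u$ and~\eqref{PDEdtdW}. Since $\s_W$ is purely atomic, both $\s_W(dx)$ and the limiting particle measure live on $\mathcal J_W$, so $u_W(t,a)=N_a(t)/m_a$, where $N_a(t)$ is the number of limiting particles at the atom $a$ of mass $m_a$; the dynamics is transport-with-delay (particles move instantaneously between atoms and are held at each atom $a$ for a mean-$m_a$ time), so mass conservation yields a balance relation $\dot N_a=(\text{flux in})-N_a/m_a$ at each atom, the two fluxes being expressible through the left and right values of $u_W(t,\cdot)$. Converting this into $-\partial_t u_W=\frac{d}{d\s_W}u_W$ --- in particular, showing that the difference quotients in~\eqref{PDEdtdW} exist (two-sided for $t>0$, when $u_W(t,\cdot)$ has developed jumps exactly on $\mathcal J_W$, but only one-sided at $t=0$, where the datum $u$ is still continuous and the other side carries no information) and both equal $-\partial_t u_W$ --- is the delicate step: one must control how $u_W(t,\cdot)$ and $\s_W$ decay on each side of an atom, simultaneously across all scales. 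I expect this, and the regularity of $u_W$ it demands, to be the main obstacle; it is exactly the ``in what sense does the density solve the equation'' point emphasized in the introduction, and is made harder by the density of $\mathcal J_W$, which forces ``piecewise constant'' and the atom-by-atom balance to be read off from the limiting dynamics rather than from classical PDE theory (the equation being hyperbolic, not diffusive). The reduction to the trap model, by contrast, is longer but more routine, resting on known trap-crossing tail asymptotics and comparison estimates, its only real novelty being the passage of the environment-dependent initial law $\mu_u^n(\w)$ through the scaling limit.
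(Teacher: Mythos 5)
Your overall route is the same as the paper's (reduce to a system of independent particles in a directed trap environment via hitting-time couplings, use the Poissonian limit of $\{(\nu_k/n,\b_k/n^{1/\k})\}$, exploit the product--Poisson structure for a mean/variance argument, and describe the limit through a probabilistically defined $u_W$), but as it stands the proposal has a genuine gap on the part of the statement that concerns \eqref{PDEdtdW}. You define $u_W$ only weakly, through $\int u_W(t,x)f(x)\,\s_W(dx)=\int\bigl(\int f(x)P_W^t(y,dx)\bigr)u(y)\,\s_W(dy)$, and then argue for the differentiability by an atom-by-atom mass balance $\dot N_a=(\text{flux in})-N_a/m_a$. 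Since $\mathcal{J}_W$ is dense, there is no atom ``immediately to the left'' of $a$, so the flux-in term is not defined by your balance relation; at best it can be read as the left limit $u_W(t,a-)$, which only yields the analogue of \eqref{duWdteq1}. The theorem asserts the \emph{two-sided} spatial difference quotient for $t>0$, and that is a separate, nontrivial fact: in the paper it rests on the pointwise dual representation $u_W(t,x)=\Ev[u(Z_W^*(t;x))]$ via the time-reversal identity of Lemma \ref{lem:dual}, the continuity in $t$ of $u_W^\circ$ (Lemma \ref{lem:uWc}, obtained by deleting the trap at $x_k$), the ODE formula \eqref{duWdteq3}, and a concentration estimate comparing $\tau_W((x_k,x_k+h])$ with $\s_W((x_k,x_k+h])$ (the $\Varv(\tau_h)\le\s_h^2$ and $\tau_h\le\s_h^{1/2}$ truncation steps in Proposition \ref{prop:duWdt}). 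You explicitly flag this as ``the main obstacle'' and do not supply it, so the part of the theorem characterizing $u_W$ is not established by your argument.

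A second, smaller but real gap is the passage from the trap-adapted setting to the actual hypotheses of the theorem. Your treatment of the initial law $\mu_u^n(\w)$ is the one sentence that $g_\w$ summed over a trap block is of the order of $\b_k$; the precise link is \eqref{bxksums} ($\b_k=\sum_{x<\nu_{k+1}}b_{x,k}$, $g_\w(x)=\sum_k b_{x,k}$), and turning it into a coupling of configurations requires controlling the particles whose starting point is far (on scale $n^{1-\e}$, or outside the windows $[a_k^n,c_k^n]$) from the large trap they feed into, which is the content of \eqref{acprop1}--\eqref{acprop3} and Lemmas \ref{lem:rwcouple1}--\ref{lem:rwcouple2}; this does not follow from the block-sum comparison alone. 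You also never address that the Poisson limit of the trap structure and the hitting-time couplings are proved under the tilted law $Q$ (stationary under shifts by ladder locations), while the theorem is stated under $P$; the paper needs the extra coupling of Proposition \ref{pr:PQsystemcouple} (and Lemma \ref{PQcouplediff}) to remove this. Finally, your claim that the single-walk coupling applied ``simultaneously to all particles'' couples the systems needs uniformity over the $\bigo(n^{1/\k})$ particles and over their $\bigo(n)$ distinct starting blocks, which is what the shift-invariance argument and \eqref{EXZuc} in Lemma \ref{lem:chietacouple} provide; treating it as automatic skips a step the proof genuinely uses.
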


\begin{figure}
 \begin{tabular}{ccc}
 \includegraphics[scale=0.35]{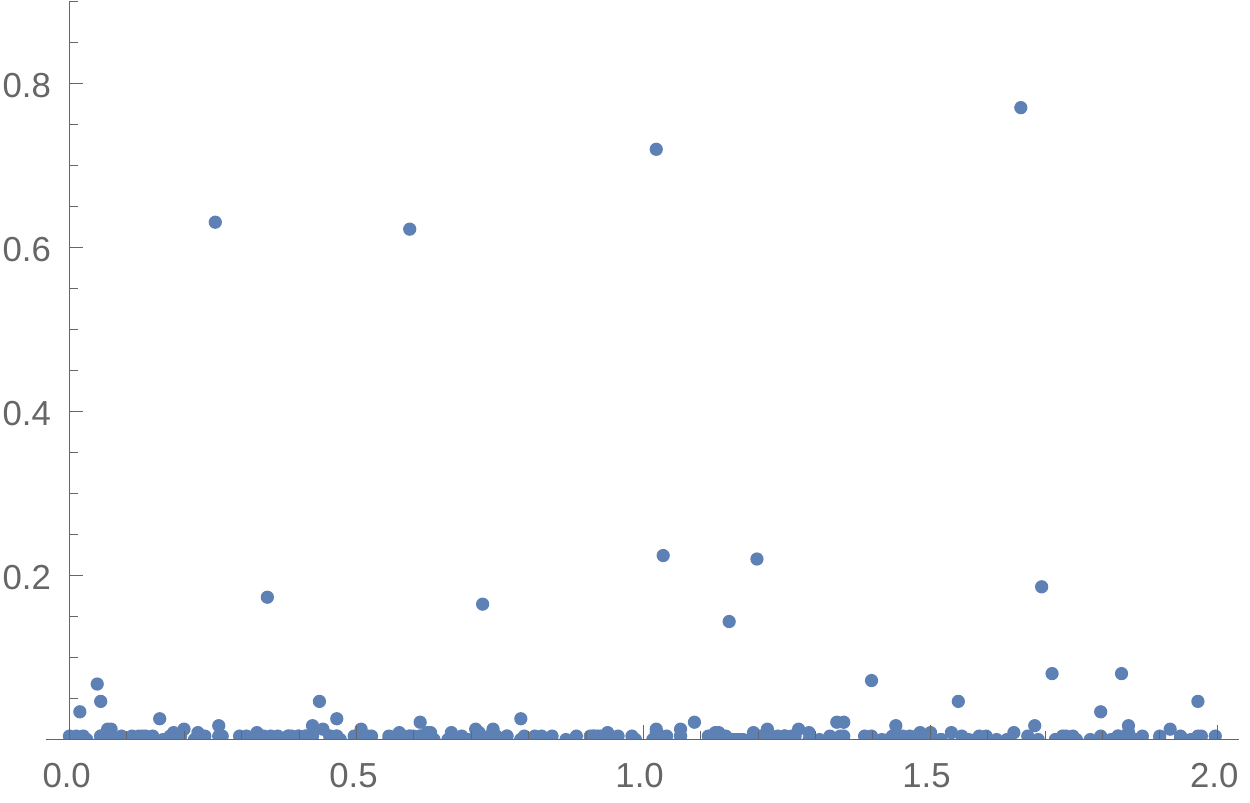} 
& \includegraphics[scale=0.35]{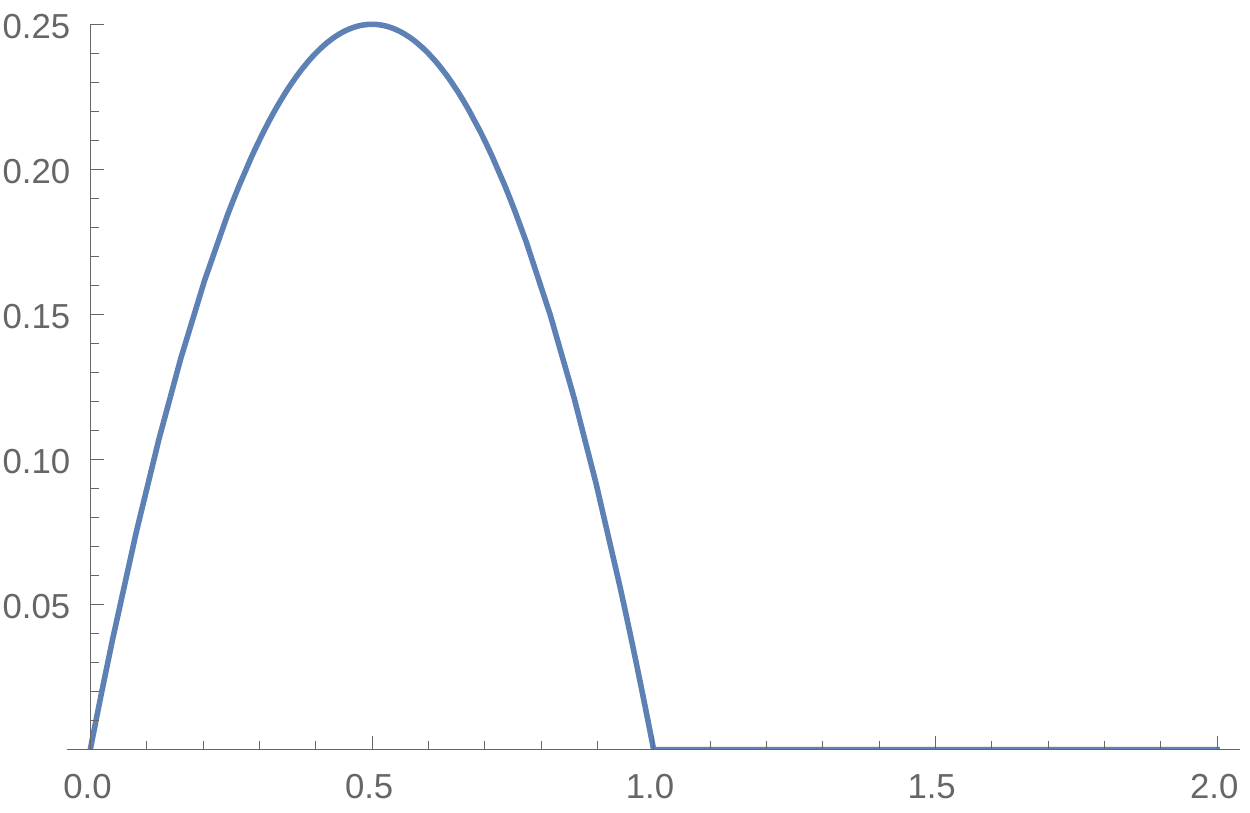}
& \includegraphics[scale=0.35]{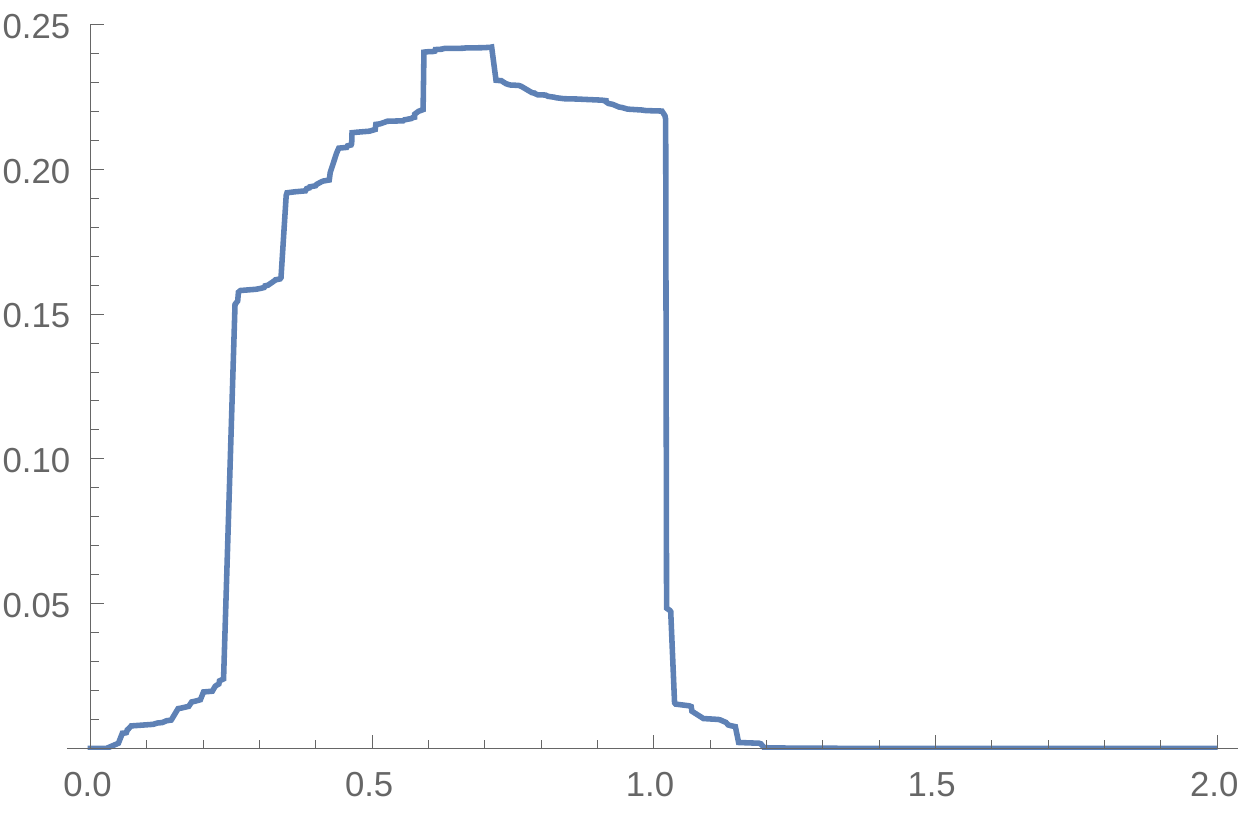}\\
  \includegraphics[scale=0.35]{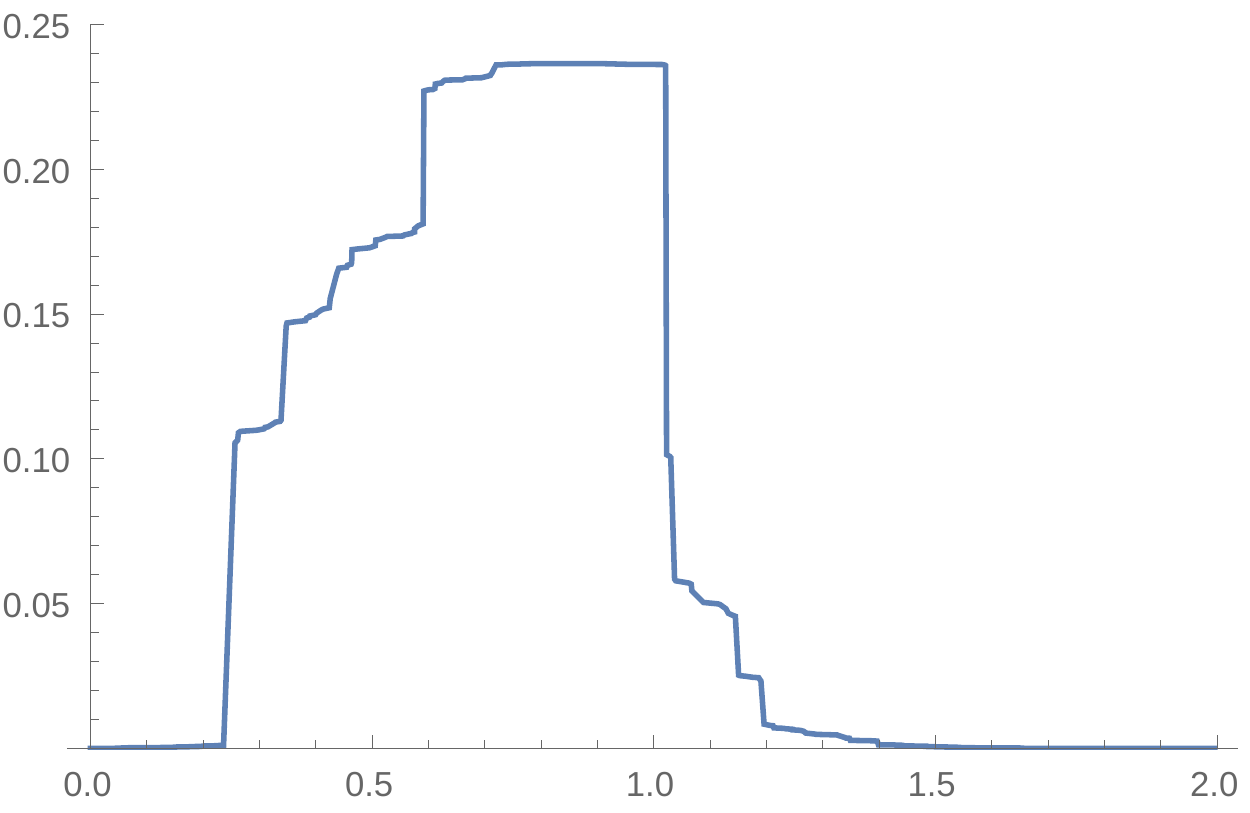}
& \includegraphics[scale=0.35]{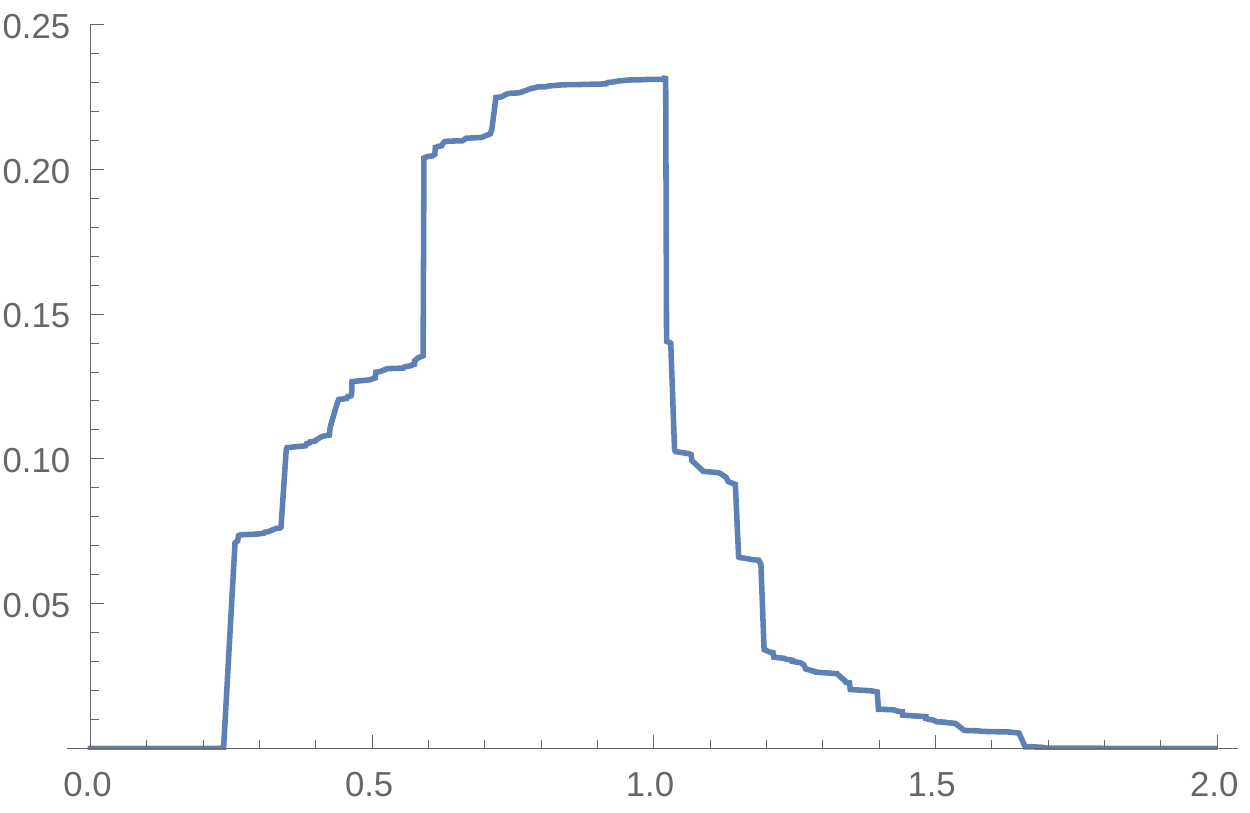}
& \includegraphics[scale=0.35]{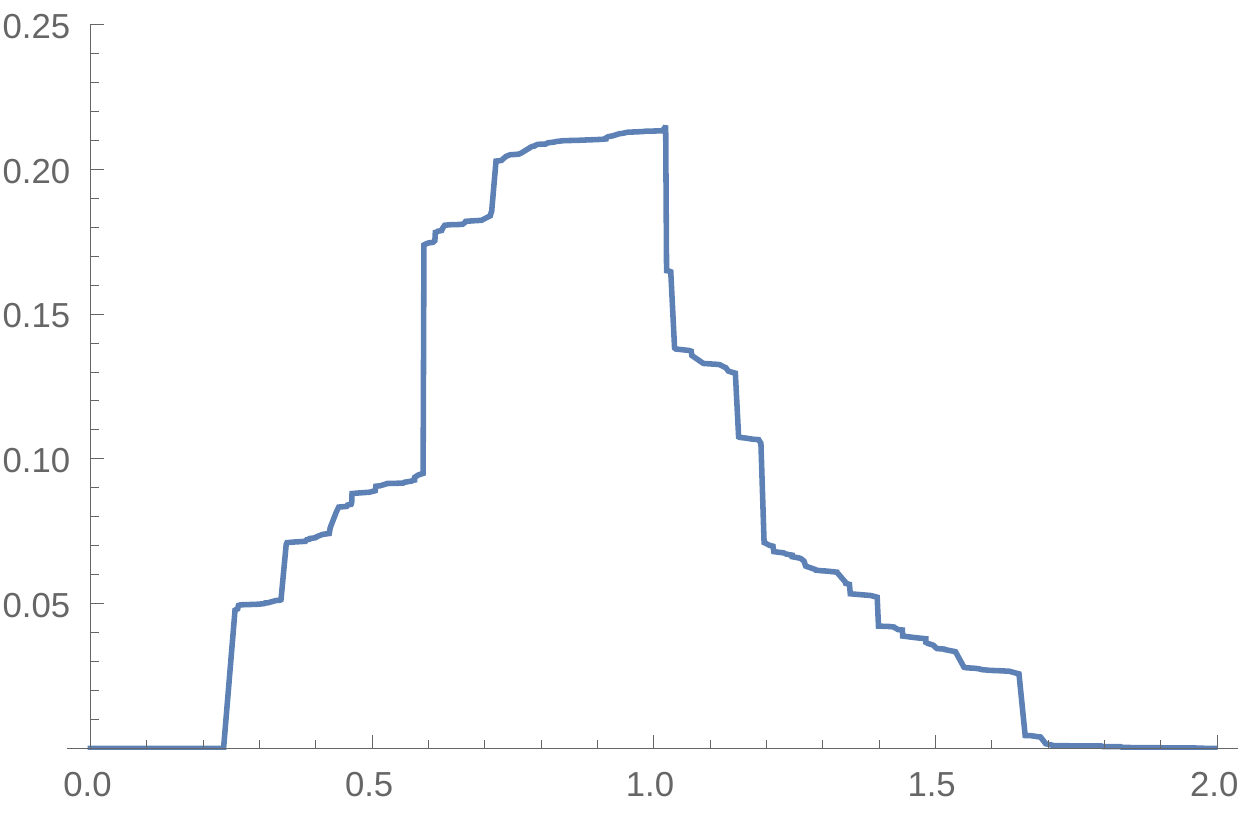}
 \end{tabular}
\caption{A simulation of the function $u_W(t,x)$ which appears in statement of Theorem \ref{th:RWREhdl} (and also in Theorem \ref{th:dthdl} below). 
The top left frame is a Poisson point process $W$ on $\R \times (0,\infty)$ with intensity measure $ y^{-\k-1} \, dy \, dx$ with $\k=0.7$. The succeeding frames show the function $u_W(t,x)$ at times $t=0,0.25,0.5,0.75,$ and $1.0$, respectively, when the initial configurations are given by the function $u(x) = x(1-x)\ind{x \in [0,1]}$.
For this simulation, a truncated point process was simulated on $\R \times [0.001,\infty)$ and since this point process has only finitely many atoms a corresponding approximation of $u_W$ can then be calculated by solving a finite system of ordinary differential equations related to the differentiability properties of $u_W$ in \eqref{PDEdtdW}.  
}\label{fig:uW}
\end{figure}

\begin{rem}
Both $u_W$ and $\s_W$ are defined in Section \ref{sec:dt} in terms of a point process $W = \sum_k \delta_{(x_k,y_k)}$ on $(0,\infty) \times \R$ (in the proof of Theorem \ref{th:RWREhdl} $W$ will be a random Poisson point process with intensity measure $\l y^{-\k-1} \, dx \, dy$).  
The function $\s_W$ in Theorem \ref{th:RWREhdl} is then identified with the atomic measure $\sum_k y_k \delta_{x_k}$ on $\R$ with support $\mathcal{J}_W = \{x_k\}_k$, and the function $u_W$ is defined probabilistically in \eqref{uWdef} using a stochastic process which evolves on $\mathcal{J}_W$. 
This probabilistic formulation defines $u_W(t,x)$ for all $x\in\R$ in such a way that it is c\`adl\`ag in $x$ with jumps only at points $x \in \{x_k\}_k = \mathcal{J}_W$.  Moreover, using this probabilistic formulation, it is shown in Section \ref{sec:uW} that the size of the jump $u_W(t,x_k) - u_W(t,x_k-)$ is proportional to $y_k = \s_W(x_k)-\s_W(x_k-)$. A simulation of the function $u_W(t,x)$ demonstrating these properties is shown in Figure \ref{fig:uW}.
\end{rem}

\begin{rem}
 The hydrodynamic limit in Theorem \ref{th:RWREhdl} is somewhat non-standard in that the limiting empirical measure of the particles is described by a \emph{random} measure $u_W(t,x) \, \s_W(dx)$. As will be seen from the proof of the theorem, the reason for this is that the effect of the environment $\w$ does not ``average out'' in the hydrodynamic scaling. 
We note that somewhat similar results in which the environment survives in the hydrodynamic limit were obtained previously for systems of independent particles in the Bouchaud trap model \cite{jltTrapHydro} and also for the exclusion process with random conductances \cite{fjlEPRC}. 
\end{rem}

\begin{rem}\label{rem:uWPDE}
Hydrodynamic limits of particle systems are typically described as solutions of some fixed PDE. 
However, the differentiability properties of the function $u_W$ given in \eqref{PDEdtdW} suggest that the empirical configuration of particles is asymptotically described by the solution to a \emph{random} PDE of the form \eqref{hdlequation}; the randomness in the PDE comes from the Poisson point process $W$ and is analogous to the randomness of the environment $\w$ in which the random walkers are moving in. 
Note that since $u_W(0,\cdot) = u(\cdot)$ is continuous and $\s_W(\cdot)$ is discontinuous at points in $\mathcal{J}_W$ the right side of \eqref{PDEdtdW} equals 0 in the case $t=0$. We state \eqref{PDEdtdW} using a limit instead in this case to make the connection with the PDE \eqref{hdlequation} more clear.

We suspect that the differentiability properties in \eqref{PDEdtdW}, together with the initial condition $u_W(0,\cdot) \equiv u(\cdot)$ uniquely characterizes the function $u_W(t,x)$. However, instead of characterizing the function $u_W$ as a solution to a PDE of some sort we will define $u_W$ probabilistically in Section \ref{sec:dt} using what we will call a \emph{directed trap process}. The differentiability properties in \eqref{PDEdtdW} will then be proved in Section \ref{sec:uW}. 
Finally, in Appendix \ref{app:uWunique} we show that if we also assume that the function $u(x)$ is of bounded variation then the function $u_W(t,x)$ is in fact the unique solution to a random PDE which depends on the point process $W$.

\end{rem}

We give now a brief outline of how we will prove Theorem \ref{th:RWREhdl}. 
Several recent works \cite{psWQLTn,psWQLXn,estzWQL,dgWQL} have shown that the behavior of RWRE under Assumptions \ref{asmiid}--\ref{asmnl} are similar to the behavior of a much simpler process which we will call a directed trap process. 
We define the directed trap process in Section \ref{sec:dt} and state an analog of Theorem \ref{th:RWREhdl} for systems of independent particles in a common trap environment. 
Due to the simple nature of the directed trap processes, one would expect the proof of the hydrodynamic limit to follow easily. However, a difficulty arises in that the natural limiting directed trap process has traps that are spatially dense in $\R$. Thus, a bit of work is required to properly define the function $u_W$ and to prove the corresponding differentiability properties like in \eqref{PDEdtdW}.
After proving the differentiability properties of the function $u_W$ in Section \ref{sec:uW} we then prove the hydrodynamic limit for directed traps (Theorem \ref{th:dthdl}) in Section \ref{sec:dthdl}. 
In Section \ref{sec:trapst} we recall some of the tools that were introduced in \cite{psWQLTn,psWQLXn} for coupling a RWRE with an associated directed trap process. These techniques are then adapted in Section \ref{sec:couple1} to couple entire systems of independent RWRE in a common environment with an associated system of independent directed trap particles in a common trap environment. 
This comes close to proving the hydrodynamic limit as stated in Theorem \ref{th:RWREhdl},
 but the natural coupling of the RWRE system with a directed trap system leads to a system of RWRE that is different from those in Theorem \ref{th:RWREhdl}. In particular, the natural coupling requires both a change in the distribution $P$ on environments and initial configurations that are different from the locally stationary configurations $\mu_u^n(\w)$ defined in \eqref{rwlocstat}. 
Proving that these two changes to the system of RWRE do not affect the hydrodynamic limit requires significant technical effort and is accomplished in Section \ref{sec:couple2}. 

\subsection{Notation and technical details}

We close the introduction by introducing some notation and discussing some technical details that will be used throughout the remainder of the paper.
Throughout this subsection, we will use $\Psi$ to denote a generic Polish space. 

\subsubsection{Functions}
The set of functions $f:\Psi \ra \R$ that are continuous with compact support will be denoted by $\mathcal{C}_0(\Psi)$, and $\mathcal{C}_0^+(\Psi) \subset \mathcal{C}_0(\Psi)$ will denote the subset of such functions that are also non-negative. In the case that $\Psi = \R$ we will write $\mathcal{C}_0$ and $\mathcal{C}_0^+$ instead of $\mathcal{C}_0(\R)$ and $\mathcal{C}_0^+(\R)$, respectively. 

If $\Psi$ has a metric $d_{\Psi}(x,y)$ and $f:\Psi \ra \R$ is a real-valued function on $\Psi$, then we will use the notation 
\begin{equation}\label{Deltadef}
\Delta(f;\d) = \sup_{d_{\Psi}(x,y) \leq \d } |f(x) - f(y)| 
\end{equation}
for the modulus of continuity of $f$.
The supremum in the definition of $\Delta(f;\d)$ is of course restricted to $x,y$ in the domain of the function $f$. 
Note that we do not assume that the function $f$ is continuous in defining the modulus of continuity. However, recall that if $f$ is uniformly continuous (for instance, if $f \in \mathcal{C}_0$) then $\Delta(f;\delta) \ra 0$ as $\d\ra 0$.

\subsubsection{Measures and point processes}
The space of non-negative Radon measures on $\Psi$ will be denoted by $\mathcal{M}_+(\Psi)$ (recall that Radon measures are finite on compact subsets). We will equip $\mathcal{M}_+(\Psi)$ with the usual vague topology.
The reader is referred to \cite{rEVRVPP} for more details on the vague topology, but we simply recall here that a sequence of Radon measures $\mu_n$ converges to $\mu \in \mathcal{M}_+(\Psi)$ if and only if $\lim_{n\ra\infty} \langle f, \mu_n \rangle = \langle f, \mu \rangle$ for all $f \in \mathcal{C}_0(\Psi)$, where here and throughout the paper we will use the notation
\[
 \langle f, \mu \rangle = \int_\Psi f(x) \mu(dx)
\]
to denote integration of $f$ with respect to the measure $\mu$. 
Also, we note that the vague topology is compatible with a metric that makes $\mathcal{M}_+(\Psi)$ a Polish space. 

If $\Psi$ is also locally compact, we will use $\mathcal{M}_p(\Psi) \subset \mathcal{M}_+(\Psi)$ to denote the subset of $\mathcal{M}_+$ consisting of point processes on $\Psi$; that is purely atomic measures of the form $\sum_i \delta_{x_i}$, where $\delta_{x_i}$ denotes the Dirac-delta measure at the point $x_i \in \Psi$. 
Since point processes are Radon measures, there are only finitely many atoms in each compact subset of $\Psi$, and since we have assumed that $\Psi$ is a locally compact Polish space we can conclude that any point process only has countably many atoms. Therefore, we may (and will often) enumerate the atoms of the point process by the set of integers $\Z$.  
The only point processes that will appear in the remainder of the paper will be on the space $\Psi = \R \times (0,\infty]$. Thus, for the remainder of the paper we will simply write $\mathcal{M}_p$ in place of $\mathcal{M}_p(\R \times (0,\infty])$. Note that $[-L,L]\times [\e,\infty]$ is a compact subset of $\R\times (0,\infty]$ and thus any point process $M \in \mathcal{M}_p$ will have only finitely many atoms in $[-L,L] \times [\e,\infty]$. 

\subsubsection{Path spaces}
If $I \subset \R$ is a connected subset of $\R$, a real-valued function function $x:I \ra \R$ is called \emph{c\`adl\`ag} if it is right continuous with left limits.
We will use the notation $x(t-) = \lim_{s \ra t^-} x(s)$ for the value of the left limit at $t \in I$. 
The collection of all c\`adl\`ag paths $x:I \ra \Psi$ will be denoted $D_I$. 

There are many topologies that can be placed on the path space $D_I$ (see \cite{wSPL}), and in this paper we will at times use two different topologies: the Skorohod $J_1$-topology\footnote{There are several different Skorohod topologies, but the Skorohod $J_1$-topology is the most commonly used and in fact is often just called the Skorohod topology.} and the uniform topology. 
When we wish to indicate that a particular topology is being used we will use the notation $D_I^J$ or $D_I^U$ to denote $D_I$ equipped with the Skorohod $J_1$-topology or the uniform topology, respectively. 
We refer the reader to \cite{bCOPM} or \cite{wSPL} for the details of the Skorohod $J_1$-topology and the uniform topology, and will simply recall here some important properties of these topologies that we will use. 
\begin{itemize}
 \item The Skorohod $J_1$-topology is compatible with a metric that makes $D_I^J$ into a Polish space. 
 \item The uniform topology is stronger than the Skorohod $J_1$-topology. However, if a sequence of paths $x_n \ra x$ in the space $D_I^{J}$ and the limiting path $x$ is continuous, then it follows that $x_n \ra x$ in $D_I^U$ as well. 
\end{itemize}

\subsubsection{Asymptotic notation}
For sequences $\{f_n\}_n$ and $\{g_n\}_n$ of real numbers with $g_n>0$ for all $n$, we will use the following notation for comparing the asymptotics of these sequences as $n\ra\infty$. 
\begin{itemize}
 \item $f_n = \bigo(g_n)$ if $\limsup_{n\ra\infty} |f_n|/g_n < \infty$. 
 \item $f_n = o(g_n)$ if $\lim_{n\ra\infty} f_n/g_n = 0$. 
 \item $f_n \sim g_n$ if $\lim_{n\ra\infty} f_n/g_n = 1.$
\end{itemize}

\noindent\textbf{Acknowledgement.} The authors are grateful to an anonymous referee for a very thorough and careful reading of the paper, and for the helpful suggestion in simplyfing the proof of Lemma \ref{lem:uWprops}. 

\section{Directed traps and systems of independent directed traps}\label{sec:dt}

The proof of the hydrodynamic limit in Theorem \ref{th:RWREhdl} will be obtained by comparing the system of RWRE with a system of particles in a directed trap environment. In this subsection we will introduce the model of directed traps and state a hydrodynamic limit for systems of independent particles in directed traps. 

\subsection{Directed traps}
We begin by describing the model of directed traps. Recall that we are using the notation $\mathcal{M}_p$ for the space of point processes on $\R \times (0,\infty]$. 
A ``trap environment'' for a directed trap process is an element $W = \sum_k \d_{(x_k,y_k)} \in \mathcal{M}_p$. 
For $W$ fixed, we then wish to construct a directed trap process $Z_W$ with the following dynamics. When located at $x_k$ it stays there for an Exp($1/y_k$) amount of time before moving to the next trap to the right. 
Of course, a complication with this informal description arises if the set $\{x_k\}_k$ of spatial trap locations is dense in $\R$, for then there is no ``next'' trap to the right. 
To account for this, we now define the following subset of $\mathcal{M}_p$ for which we can make the above construction rigorous (including cases where the locations of the traps are dense in $\R$). 

\begin{defn}\label{def:T}
 The subset $\mathcal{T} \subset \mathcal{M}_p$ consists of all 
point processes $W$ with the following properties: 
\begin{enumerate}
 \item $W(\R \times \{\infty\}) = 0$.
 \item $\sup_x W(\{x\} \times (0,\infty]) = 1$. 
 \item There exists an $\e_0>0$ such that $W((-\infty,-L]\times [\e_0,\infty)) = W([L,\infty)\times [\e_0,\infty)) = \infty$ for all $L<\infty$. 
 \item For any fixed $\e>0$ and $L<\infty$, 
\[
 \iint\limits_{[-L,L]\times (0,\e)} y \, W(dx \, dy) = \sum_k y_k \ind{|x_k|\leq L, \, y_k < \e} < \infty. 
\]
\end{enumerate}
Point processes $W \in \mathcal{T}$ will be referred to as \textbf{trap environments}.
\end{defn}


\begin{rem}
We briefly explain the need for these three conditions in the definition of $\mathcal{T}$. The first condition states that there are no ``infinite traps'' and the second condition is that there is only one trap at each spatial location. 
The final two conditions relate to the time it will take the process to cross an interval. 
The third condition ensures that there are infinitely many traps larger than a fixed size in both spatial directions; thus the process will not be able to travel an infinite distance in a finite amount of time. 
Finally, since there can be infinitely many ``small traps'' in an interval $[-L,L]$, the fourth condition will ensure that these traps are small enough so that the total time spent by the process $Z_W$ in crossing the interval $[-L,L]$ is finite. 
\end{rem}

\begin{rem}
Because there are (countably) infinitely many atoms $(x_k,y_k)$ in a trap environment $W \in \mathcal{T}$, we can enumerate the trap environments by the index set $\Z$. However, it should not necessarily be assumed that the atoms are ordered with respect to this indicing. That is, we cannot necessarily assume that $k<\ell$ implies that $x_k < x_\ell$ (although this will be the case for some trap environments we will consider later). 
\end{rem}


\begin{defn}
 If $W \in \mathcal{T}$ is a trap environment, then we will define 
\[
 \mathcal{J}_W = \{x \in \R: \, W(\{x\}\times (0,\infty)) > 0\}.
\]
We will refer to $\mathcal{J}_W$ as the set of \textbf{trap locations} for $W$.  
\end{defn}
\begin{defn}
  If $W \in \mathcal{T}$ is a trap environment, then we will define the measure $\s_W \in \mathcal{M}_+(\R)$ by 
\begin{equation}\label{sWdef}
 \s_W(A) = \iint_{A \times (0,\infty)} y \, W(dx \, dy). 
\end{equation}
We will refer to $\s_W$ as the \textbf{trap measure} for $W$. 
\end{defn}
\begin{rem}
 The fact that $\s_W$ is a Radon measure for $W \in \mathcal{T}$ follows from the first and last properties in Definition \ref{def:T}. 
\end{rem}

We now show how to construct a directed trap process $Z_W$ for $W \in \mathcal{T}$. 
Let $\{\zeta_k\}_{k\in \Z}$ be an i.i.d.\ family of Exp(1) random variables, and let 
$\tau_W \in \mathcal{M}_+(\R)$ be the random measure on $\R$ given by 
\[
 \tau_W = \sum_{k \in \Z} y_k \zeta_k \d_{x_k}. 
\]
\begin{rem}
The construction of $\tau_W$ above does not necessarily imply that $\tau_W \in \mathcal{M}_+$, but we claim that for almost every instance of the exponential random variables $\{\zeta_k\}$ the construction does indeed give a Radon measure.  
 To see this, note that $\Ev[\sum_{k \in \Z} y_k \zeta_k \ind{|x_k|\leq L}] = \sum_k y_k \ind{|x_k|\leq L} = \s_W([-L,L]) < \infty$ for any $L<\infty$.
Therefore, it follows that with probability one $\tau_W([-L,L]) < \infty$ for all $L<\infty$. That is, $\tau_W$ is indeed a non-negative Radon measure on $\R$. 
Therefore, we can define $\tau_W$ as above on a set of full probability and arbitrarily set $\tau_W$ to be Lebesgue measure on the null set where the above construction does not give a Radon measure. 
\end{rem}

We are now ready to define the directed trap process $Z_W$. 
The process $Z_W$ will move strictly to the right with speed dictated by the measure $\tau_W$ in that $\tau_W(A)$ will be the total amount of time that the process $Z_W$ spends in the subset $A$. 
To make this precise, for any $x\in \R$ and $t\geq 0$ we will define
\begin{equation}\label{ZWdef}
 Z_W(t;x) = \sup\{ x': \, \tau_W([x,x')) \leq t \}. 
\end{equation}
If the set $\mathcal{J}_W$, which marks to spatial locations of the traps, is nowhere dense in $\R$ then it is clear that $\{Z_W(t;x)\}_{t\geq 0}$ is a Markov process on the set $\mathcal{J}_W$ which evolves in the manner described at the beginning of the section (i.e., waits at $x_k$ for an Exp($1/y_k$) amount of time). 
On the other hand, if $\mathcal{J}_W$ has limit points in $\R$ then it is more complicated to prove that the construction \eqref{ZWdef} gives a Markov process. 
However, it is easy to see that this construction implies the following fact which would correspond to an application of the strong Markov property.
\begin{equation}\label{ZWsm}
  Z_W(t;x) = Z_W(t + \tau_W([z,x)); z), \quad \forall z<x.
\end{equation}
Additionally, the following properties of the process $Z_W$ follow easily from the definition in \eqref{ZWdef}. 
\begin{itemize}
 \item For any fixed $x \in \R$, the process $t\mapsto Z_W(t;x)$ is non-decreasing and right continuous. 
 \item The process $Z_W(t;x)$ starts at the ``first trap'' greater than or equal to $x$. That is, 
\[
 Z_W(0;x) = \inf\{ y \geq x: \, y \in \mathcal{J}_W \}. 
\]
\item The construction in \eqref{ZWdef} gives a natural coupling so that $Z_W(t;x) \leq Z_W(t;y)$ whenever $x \leq y$. Moreover, for any fixed $t > 0$, $Z_W(t;x)$ is non-decreasing and left continuous in $x$. 
\end{itemize}

An important special class of trap environments will be those trap environments with trap locations that are dense in $\R$. 
\begin{defn}
 The subset $\mathcal{T}'$ is the collection of all trap environments $W \in \mathcal{T}$ with the property that
$\mathcal{J}_W$ is dense in $\R$. 
\end{defn}
\noindent
The following additional properties are true for $Z_W$ when $W \in \mathcal{T}'$. 
\begin{itemize}
 \item If $W \in \mathcal{T}'$, then $Z_W(0;x) = x$ for all $x\in \R$. 
 \item If $W \in \mathcal{T}'$, then $Z_W(t;x)$ is almost surely continuous in $t$ for any fixed $x$. This follows from the fact that $\tau_W([x,y))$ is almost surely strictly increasing in $y$ for any fixed $x\in \R$. 
\end{itemize}

In addition to the directed trap process $Z_W$ defined above, we will also need to define an analogous process that moves to the left instead. 
That is, 
\[
 Z_W^*(t;x) = \inf\{ x': \tau_W((x',x]) \leq t \}. 
\]
Of course similar properties that were stated above for $Z_W$ are also true for $Z_W^*$.
Similarly to \eqref{ZWsm}, we have the following strong Markov-like property. 
\begin{equation}\label{ZWssm}
  Z_W^*(t;x) = Z_W^*(t + \tau_W((x,z]); z), \quad \forall x < z.
\end{equation}
Other important properties which are slightly different from those for $Z_W$ are the following. 
\begin{itemize}
 \item For $x \in \R$ fixed, $Z_W^*(t;x)$ is non-increasing and right continuous in $t$. 
 \item $Z_W^*(0;x) = \sup\{y\leq x: y \in \mathcal{J}_W \}$. 
 \item For $t > 0$ fixed, $Z_W^*(t;x)$ is non-decreasing and right continuous in $x$. 
\end{itemize}
The importance of the left-directed trap process $Z_W^*$ is that we will use it to define the function $u_W$ that appears in the statement of Theorem \ref{th:RWREhdl} and also in the hydrodynamic limit for directed traps below. This function is defined by 
\begin{equation}\label{uWdef}
 u_W(t,x) = \Ev\left[ u\left(Z_W^*(t;x)\right) \right]. 
\end{equation}
Clearly the above properties of $Z_W^*$ show that $u_W(0,x) = u(x)$ whenever $W \in \mathcal{T}'$.
We will prove that $u_W(t,x)$ satisfies the differentiability properties of \eqref{PDEdtdW} if $W \in \mathcal{T}'$ in  Section \ref{sec:uW} below. 

\subsection{Hydrodynamic Limits for Directed Traps}
The main goal of the current paper is to study systems of independent random walks in a common random environment $\w$. 
In a similar manner, we will study systems of independent directed trap particles in a common trap environment $W$. 
We will use notation that is similar to that which was used to define the systems of independent RWRE. 

Let $W \in \mathcal{T}$ be a fixed trap environment, and fix an enumeration $\{(x_k,y_k)\}$ of the atoms of $W$. 
Given this enumeration of the traps let $\{ (Z^{k,j}_W(t) )_{t\geq 0} \}_{k \in \Z, j\geq 1}$ be an independent family of directed trap processes in the trap environment $W$ such that 
$Z^{k,j}_W(\cdot) \overset{\text{Law}}{=} Z_W(\cdot\,;x_k)$ for all $k \in \Z$, $j\geq 1$. 
 As with the systems of RWRE, we will only start finitely many particles at each $x_k$, and we will use $\eta_t^W$ to denote the configuration of particles at any time $t\geq 0$. 
That is, we will fix a (random) initial configuration $\{ \eta_0^W(x_k) \}_{k\in \Z}$ according to a distribution that we will specify later. 
Then the configuration $\eta_t^W$ at any later time $t>0$ will be given by 
\[
 \eta_t^W(x_k) = \sum_{\ell \in \Z} \sum_{j=1}^{\eta_0^W(x_\ell)} \ind{Z_W^{\ell,j}(t) = x_k }, \quad k\in\Z, \, t\geq 0.
\]

We will be interested in proving a hydrodynamic limit for systems of independent directed trap processes in a common trap environment. To obtain a limit, however, instead of re-scaling a fixed trap environment we will instead assume that we have a sequence of trap environments $W_n$ that converge to a trap environment $W$. 
Moreover, we will assume that the limiting trap environment $W$ is dense in $\R$. 

\begin{asm}\label{asmtrapc}
 $\{W_n\}_{n\geq 1} \subset \mathcal{T}$ is a sequence of point processes such that $W_n \ra W \in \mathcal{T}'$ as $n\ra\infty$, where the convergence is with respect to the vague topology on $\mathcal{M}_p(\R \times (0,\infty])$.  
\end{asm}
\begin{rem}
 We will denote the atoms of $W_n$ by $\{(x_k^n,y_k^n)\}_{k\in \Z}$ to distinguish them from the atoms $(x_k,y_k)$ of  $W$. That is, $W_n = \sum_{k\in\Z} \d_{(x_k^n,y_k^n)}$ and $W= \sum_{k \in \Z} \d_{(x_k,y_k)}$. 
\end{rem}

Since $W_n \in \mathcal{T}$ for every $n\geq 1$, the fourth condition in the definition of $\mathcal{T}$ implies that 
\[
 \lim_{\e\ra 0}  \iint\limits_{[-L,L]\times (0,\e)} y \, W_n(dx \, dy) = 0, \quad \forall L<\infty, \, n\geq 1. 
\]
However, we will need to assume the following uniform control on this convergence. 

\begin{asm}\label{asmtraps}
 The sequence $\{W_n\}_{n\geq 1}$ of trap environments is such that 
\[
 \lim_{\e\ra 0} \limsup_{n\ra\infty}  \iint\limits_{[-L,L]\times (0,\e)} y \, W_n(dx \, dy) = 0, \quad \forall L<\infty. 
\]
\end{asm}

%

For a fixed sequence of trap environments $W_n \ra W$, we will use the notation $\eta_t^n$ to denote the system of particles in the trap environment $W_n$ rather than the more cumbersome $\eta_t^{W_n}$. 
To obtain a hydrodynamic limit for the systems of independent directed trap particles, we will need to make the following assumptions on the sequence of initial configurations. 
\begin{asm}\label{asmtrapic}
There exists a sequence $a_n \ra \infty$ and a function $u \in \mathcal{C}_0^+$ such that for every $n$, the initial configuration 
of the system $\eta^n$ in the trap environment $W_n = \sum_k \d_{(x_k^n,y_k^n)}$ is given by
\begin{equation}\label{trapic}
 \{ \eta_0^n(x_k^n) \}_{k\in \Z} \text{ is product Poisson with }\quad 
\eta_0^n(x_k^n) \sim \textnormal{Poisson}(a_n u(x_k^n) y_k^n). 
\end{equation}
\end{asm}
\begin{rem}
 It can be seen that for any trap environment $W = \sum_k \d_{(x_k,y_k)}$ and any $\a>0$ the configuration that is product Poisson with $\eta(x_k) \sim \text{Poisson}(\a y_k)$ is a stationary configuration for the system of particles in the trap environment $W$. Thus, the initial configurations given in \eqref{trapic} are analogous to the locally stationary initial configurations $\mu_u^n$ for the systems of independent RWRE that were described above. 
\end{rem}
\begin{rem}
 The sequence $a_n$ is needed so that the number of particles in a fixed interval is unbounded as $n \ra\infty$ and allows for a law of large numbers type result to be used. 
When we use Theorem \ref{th:dthdl} to prove results for systems of RWRE the sequence $a_n$ will be $n^{1/\k}$. 
\end{rem}

We are now ready to state our main result for the hydrodynamics of systems of independent directed trap particles. 
\begin{thm}\label{th:dthdl}
Suppose that there are trap environments $W_n \ra W \in \mathcal{T}'$ as in Assumptions \ref{asmtrapc} and \ref{asmtraps}. 
Let the systems of particles $\eta^n$ in the trap environments $W_n$ be constructed on a common probability space $\Pv$ so that the initial conditions given in Assumption \ref{asmtrapic} are satisfied for some $u \in \mathcal{C}_0^+$ and some sequence $a_n \ra \infty$. 
Then, for any $\phi \in \mathcal{C}_0(\R_+ \times \R)$ we have that 
\begin{equation}\label{dthdllim}
 \lim_{n\ra\infty} \frac{1}{a_n} \int \sum_{k\in\Z} \eta_t^n(x_k^n) \phi(t,x_k^n) \, dt 
= \int\int_\R \phi(t,x) u_W(t,x) \, \s_W(dx) \, dt, \quad \text{ in $\Pv$-probability}, 
\end{equation}
where $u_W(t,x)$ is the function defined in \eqref{uWdef}.
\end{thm}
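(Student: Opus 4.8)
The plan is to exploit the fact that, started from a product-Poisson configuration with independently moving particles, the space--time average is a Poissonized sum, so it concentrates around its mean; the mean can then be rewritten by a duality between the right- and left-directed trap processes and passed to the limit. Writing out each particle's trajectory,
\[
\Xi^n:=\frac{1}{a_n}\int\sum_{k}\eta_t^n(x_k^n)\phi(t,x_k^n)\,dt=\frac{1}{a_n}\sum_{k\in\Z}\sum_{j=1}^{\eta_0^n(x_k^n)}\int_0^\infty\phi\big(t,Z_{W_n}^{k,j}(t)\big)\,dt,
\]
where swapping $\int dt$ with the sums is legitimate since $\phi$ has compact support. For fixed $k$ the functionals $G_k^{n,j}:=\int_0^\infty\phi(t,Z^{k,j}_{W_n}(t))\,dt$ are i.i.d.\ over $j$ and bounded by $C_\phi:=\|\phi\|_\infty\,\mathrm{Leb}\{t:\phi(t,\cdot)\not\equiv 0\}$, and $a_n\Xi^n$ is a sum over $k$ of independent compound Poisson variables, so
\[
\Ev[\Xi^n]=\sum_k u(x_k^n)\,y_k^n\,\Ev\Big[\int_0^\infty\phi\big(t,Z_{W_n}(t;x_k^n)\big)\,dt\Big],\qquad \Var(\Xi^n)=\frac{1}{a_n}\sum_k u(x_k^n)\,y_k^n\,\Ev\big[(G_k^n)^2\big].
\]
Both sums are restricted to $\{k:x_k^n\in\supp u\}$. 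A short argument from Assumptions \ref{asmtrapc} and \ref{asmtraps} (no atoms of $W_n$ escape to $y=\infty$ over a compact $x$-window, and the sub-$\e$ traps are controlled uniformly in $n$) gives $\sup_n\s_{W_n}(\supp u)<\infty$ together with $\s_{W_n}\to\s_W$ vaguely; hence $\Var(\Xi^n)\le a_n^{-1}\|u\|_\infty C_\phi^2\,\s_{W_n}(\supp u)\to 0$, and by Chebyshev it is enough to show that $\Ev[\Xi^n]$ converges to the right-hand side of \eqref{dthdllim}.

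For the mean, I would use that each $W_n$ is a discrete trap environment, so $Z_{W_n}(\cdot\,;x)$ is an honest continuous-time Markov chain on $\mathcal{J}_{W_n}$ whose generator sends $f$ at a trap of size $y$ to $\tfrac1y(f(\text{next trap})-f(\text{here}))$, while $Z^*_{W_n}$ has the analogous generator with ``previous trap''. A one-line summation by parts shows these two generators are adjoint with respect to $\s_{W_n}=\sum_k y_k^n\delta_{x_k^n}$, i.e.\ $Z_{W_n}$ and $Z^*_{W_n}$ are in time-duality relative to $\s_{W_n}$. Therefore, for every $t$,
\[
\sum_k y_k^n\,u(x_k^n)\,\Ev\big[\phi(t,Z_{W_n}(t;x_k^n))\big]=\sum_k y_k^n\,\phi(t,x_k^n)\,\Ev\big[u(Z^*_{W_n}(t;x_k^n))\big],
\]
the sums converging absolutely because $u$ and $\phi(t,\cdot)$ have compact support and the semigroups are sup-norm contractions. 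Integrating in $t$ and writing $u_{W_n}(t,x):=\Ev[u(Z^*_{W_n}(t;x))]$ (the $W_n$-analogue of the function $u_W$ in \eqref{uWdef}) yields $\Ev[\Xi^n]=\iint\phi(t,x)\,u_{W_n}(t,x)\,\s_{W_n}(dx)\,dt$, which is exactly the target of \eqref{dthdllim} with $W_n$ in place of $W$.

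It then remains to show $\iint\phi\,u_{W_n}\,d\s_{W_n}\,dt\to\iint\phi\,u_W\,d\s_W\,dt$. Here I would construct all the directed trap processes on one probability space via a common family of $\mathrm{Exp}(1)$ clocks and prove $\tau_{W_n}\to\tau_W$ as random Radon measures on $\R$; this is precisely where Assumption \ref{asmtraps} is indispensable, forcing the ever more numerous small traps of $W_n$ to contribute asymptotically nothing to the time needed to cross any compact interval. Since $W\in\mathcal{T}'$ has dense trap locations, $\tau_W([x,\cdot))$ is strictly increasing, so $Z^*_W(t;x)$ is continuous in $t$, and $\tau_{W_n}\to\tau_W$ then gives $Z^*_{W_n}(t;x)\to Z^*_W(t;x)$ and hence $u_{W_n}(t,x)\to u_W(t,x)$ boundedly for every $(t,x)$ with $x\notin\mathcal{J}_W$; combined with the vague convergence $\s_{W_n}\to\s_W$ and the uniform bound $\sup_n\s_{W_n}(K)<\infty$ on compacts, this yields the desired convergence of the means and completes the proof.

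The main obstacle is this last step: $u_{W_n}(t,\cdot)$ and $u_W(t,\cdot)$ are only c\`adl\`ag, with jumps on the dense set of trap locations, while $\s_{W_n}$ and $\s_W$ are purely atomic, so one is integrating discontinuous functions against atomic measures that converge only vaguely, and one must show that the jumps and the atoms line up in the limit. I expect this to require an $\e$-truncation argument — on the finitely many traps of size $\ge\e$ in a compact window $u_{W_n}$ and $\s_{W_n}$ converge cleanly, while the contribution of the sub-$\e$ traps is controlled uniformly in $n$ via Assumption \ref{asmtraps}, the a priori bound $|u_{W_n}|\le\|u\|_\infty$, and the contraction property — and it is also the place where the precise probabilistic definition of $u_W$ through $Z^*_W$, and its compatibility with dense traps, must be exploited. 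This is carried out in Section \ref{sec:dthdl}.
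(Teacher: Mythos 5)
Your first two steps are sound, and in fact the reduction they give is somewhat cleaner than the paper's: the space--time functional is a sum over $k$ of independent compound Poisson variables, so its variance is bounded by $a_n^{-1}\|u\|_\infty\|\phi\|_\infty^2 T^2\,\s_{W_n}(\supp u)$, which tends to $0$ once one checks $\sup_n\s_{W_n}(\supp u)<\infty$ from Assumptions \ref{asmtrapc} and \ref{asmtraps}; and the time-reversal identity (the paper's \eqref{duality}, Lemma \ref{lem:dual}) turns the mean into $\iint\phi(t,x)\,u_{W_n}(t,x)\,\s_{W_n}(dx)\,dt$. Taken at face value this bypasses the paper's particle-level truncation estimate \eqref{truncerror} entirely. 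One caveat: your justification of the duality via generator adjointness assumes each $W_n$ has a well-defined ``next trap,'' i.e.\ isolated trap locations, which Assumption \ref{asmtrapc} does not guarantee ($W_n\in\mathcal{T}$ may itself have dense traps). This is repairable --- the identity $y_\ell\Pv(Z_W(t;x_\ell)=x_k)=y_k\Pv(Z_W^*(t;x_k)=x_\ell)$ is proved in the paper for arbitrary $W\in\mathcal{T}$ by conditioning on $\tau_W((x_\ell,x_k))$ --- but as written your argument only covers discrete $W_n$.

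The genuine gap is the final step, which you yourself flag as ``the main obstacle'' and then defer (``this is carried out in Section \ref{sec:dthdl}''): the deterministic convergence $\iint\phi\,u_{W_n}\,d\s_{W_n}\,dt\to\iint\phi\,u_W\,d\s_W\,dt$ is the actual content of the theorem, and nothing in your sketch establishes it. Pointwise convergence $u_{W_n}(t,x)\to u_W(t,x)$ off $\mathcal{J}_W$ together with vague convergence $\s_{W_n}\to\s_W$ cannot suffice, precisely because both measures are purely atomic and both integrands jump at the atoms; moreover the a.s.\ convergence $\tau_{W_n}\to\tau_W$ under a ``common clock'' coupling is itself unproved (the small atoms of $W_n$ have no canonical partners in $W$). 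Closing the gap requires three separate estimates, all of which carry real content: (i) convergence on the $\e$-truncated environments, which the paper gets by identifying $u_{W_n^{(\e)}}(t,x_k^n)$ as the solution of a finite linear ODE system with converging coefficients (Lemma \ref{uWnuW}); (ii) the comparison of $u_{W^{(\e)}}$ with $u_W$, i.e.\ \eqref{WeWconv}, which uses a coupling of $Z^*_{W^{(\e)}}$ with $Z^*_W$ and the a.s.\ continuity of $Z_W^*$ available only because $W\in\mathcal{T}'$; and (iii) a comparison of $u_{W_n}$ with $u_{W_n^{(\e)}}$ that is \emph{uniform in $n$}. Item (iii) is the delicate one in your route: $Z_{W_n}$ is not continuous, so bounding its oscillation over the small time increment $\tau_{W_n}^{(L,\e)}$ uniformly in $n$ needs something like Lemma \ref{ZWnconv}, i.e.\ convergence of $Z_{W_n}$ to the continuous limit $Z_W$ in the uniform topology via the inverse map on strictly increasing paths, in addition to Assumption \ref{asmtraps}. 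Without supplying (i)--(iii) your proposal is an outline of the paper's Section \ref{sec:dthdl} rather than a proof.
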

\begin{rem}
 The inner integral on the right in \eqref{dthdllim} is a Lebesgue integral.
That is, using the representation $W=\sum_k \d_{(x_k,y_k)}$ we have
\[
 \int_\R \phi(t,x) u_W(t,x) \, \s_W(dx) = \sum_{k \in \Z} \phi(t,x_k) u_W(t,x_k) y_k. 
\]
\end{rem}

\section{The function \texorpdfstring{$u_W(t,x)$}{}} \label{sec:uW}

The main goal of this section will be to prove differentiability properties of the function $u_W(t,x)$ that 
arises in the statement of the hydrodynamic limits in Theorems \ref{th:RWREhdl} and \ref{th:dthdl}.   
Recall the definition of $u_W(t,x)$ in \eqref{uWdef} for any trap environment $W \in \mathcal{T}$. 
The main goal of this section is to show some differentiability properties of $u_W$, but we begin with a few easy continuity properties. 
\begin{lem}\label{lem:uWcont}
 For any $W \in \mathcal{T}$, the function $u_W(t,x)$ is right continuous with left limits in $x$ for every $t > 0$ and continuous in $t$ for every $x \in \R$. 
\end{lem}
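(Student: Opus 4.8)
The plan is to work directly from the definition $u_W(t,x) = \Ev[u(Z_W^*(t;x))]$ and transfer the path-regularity properties of $x \mapsto Z_W^*(t;x)$ and $t \mapsto Z_W^*(t;x)$, which were already catalogued right after the definition of $Z_W^*$, through the bounded continuous function $u$. Since $u \in \mathcal{C}_0^+$ is bounded and uniformly continuous, the dominated convergence theorem lets us pass limits inside the expectation, so the whole lemma reduces to almost-sure one-sided limit statements for the process $Z_W^*$.

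For right continuity in $x$ at fixed $t>0$: we recorded that $Z_W^*(t;\cdot)$ is non-decreasing and right continuous in $x$. Hence for $x_n \downarrow x$ we have $Z_W^*(t;x_n) \to Z_W^*(t;x)$ pointwise (almost surely, for every realization of $\tau_W$), so $u(Z_W^*(t;x_n)) \to u(Z_W^*(t;x))$ by continuity of $u$, and $|u(Z_W^*(t;x_n))| \leq \|u\|_\infty$ is a uniform integrable bound; dominated convergence gives $u_W(t,x_n) \to u_W(t,x)$. For the existence of left limits in $x$: monotonicity of $Z_W^*(t;\cdot)$ guarantees that $Z_W^*(t;x-) := \lim_{y\uparrow x} Z_W^*(t;y)$ exists for every realization, and again $u(Z_W^*(t;y)) \to u(Z_W^*(t;x-))$ with the same domination, so $\lim_{y \uparrow x} u_W(t,y) = \Ev[u(Z_W^*(t;x-))]$ exists. (One should be mildly careful that $u$ need not be monotone, so the left limit of $u_W$ in $x$ is $\Ev[u(Z_W^*(t;x-))]$ rather than anything expressed via monotone limits of $u_W$ itself — but existence is all that is claimed.)

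For continuity in $t$ at fixed $x$: the relevant property is that $t \mapsto Z_W^*(t;x)$ is non-increasing and right continuous; moreover, when $W \in \mathcal{T}'$ it is in fact continuous, but since the lemma allows general $W \in \mathcal{T}$ we cannot assume that. Instead, note that $t\mapsto Z_W^*(t;x)$ being monotone and right continuous means it has left limits everywhere, so $Z_W^*(t-;x) := \lim_{s\uparrow t} Z_W^*(s;x)$ exists; a jump of this process at time $t$ occurs only when $\tau_W$ has an atom exactly at the current position $Z_W^*(t;x)$, i.e. at a point $x_k$, and this happens for at most countably many $t$ (one $t$ per trap visited). Actually for continuity in $t$ of $u_W$ we should argue more carefully: one clean route is to observe that for $\tau_W$-a.e.\ realization the set of discontinuity times of $t\mapsto Z_W^*(t;x)$ is the (random) countable set of times at which the process "lands on" a new atom, and at such a time the left limit $Z_W^*(t-;x)$ and the value $Z_W^*(t;x)$ differ only by the mass already accumulated — but the resolution is that $u_W(t,x) = \Ev[u(Z_W^*(t;x))]$ and we want joint-in-$\w$ continuity: so we show $s_n \to t$ implies $Z_W^*(s_n;x)\to Z_W^*(t;x)$ in probability. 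Here the honest statement is that the fixed time $t$ is, almost surely, a continuity point of $s\mapsto Z_W^*(s;x)$, because the exponential holding variables $\{y_k\zeta_k\}$ are continuous, so the random countable set of jump times of the process avoids any fixed deterministic $t$ with probability one; combined with right continuity and existence of left limits this gives $Z_W^*(s_n;x)\to Z_W^*(t;x)$ a.s., and then dominated convergence finishes as before.

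The main obstacle, and the only place any real care is needed, is the continuity-in-$t$ step when $W \notin \mathcal{T}'$: the path $t\mapsto Z_W^*(t;x)$ genuinely has jumps, so "continuous in $t$ for every $x$" for $u_W$ must come from the jumps sitting at random times that miss any fixed $t$ almost surely. I would spell this out by noting that $Z_W^*(t;x) = x_{K(t)}$ where $K(t)$ indexes the atom currently occupied, that $K$ only changes value at times $t$ of the form $\tau_W((x_{K(t)},x])$ — a.s.\ a discrete set — and that since each holding duration $y_k\zeta_k$ has a continuous distribution, $\Pv(t \text{ is a jump time}) = 0$. Everything else is a routine application of dominated convergence using $\|u\|_\infty < \infty$; I expect the write-up to be short.
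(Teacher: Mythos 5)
Your proof is correct, but the key step --- continuity in $t$ --- goes by a genuinely different route than the paper's. For right continuity with left limits in $x$, and for right continuity with left limits in $t$, you and the paper do the same thing: monotonicity/right continuity of the paths of $Z_W^*$ plus bounded convergence. The difference is in upgrading to \emph{left} continuity in $t$. The paper never examines jump times: it uses the Markov-type identity \eqref{ZWssm} to write $u_W(t,x+h)$ as in \eqref{uWrightx}, lets $h \ra 0^+$ (so that $\tau_W((x,x+h])\ra 0$), and concludes $u_W(t,x) = \lim_{h\ra 0^+} u_W(t,x+h) = u_W(t-,x)$; that is, it converts the already-proved right continuity in $x$ into left continuity in $t$ by a bounded-convergence argument carried out at the level of $u_W$ itself. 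You instead argue at the level of sample paths: for fixed $(t,x)$ the time $t$ is a.s.\ a continuity point of $s\mapsto Z_W^*(s;x)$ because the discontinuity times have continuous laws, and then apply dominated convergence. This is valid, but it needs slightly more bookkeeping than your sketch provides: when atoms of $W$ accumulate, a discontinuity of $Z_W^*(\cdot;x)$ is not always of the form ``leaving an atom'' --- it can also occur at a time $\tau_W((b,x])$ where $b$ is the right endpoint of a gap of $\mathcal{J}_W$ that is merely a limit of atoms --- and the set of jump times need not be discrete, only countable (which is automatic for a monotone path). Since every candidate discontinuity time is a nonempty sum of independent terms $y_k\zeta_k$ with continuous distribution, a union bound over the countable family of atoms and gap endpoints still yields $\Pv\bigl(t \text{ is a discontinuity time}\bigr)=0$, so your argument closes once this enumeration is spelled out. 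The paper's trick buys brevity and avoids this case analysis (and \eqref{uWrightx} is reused anyway in the proof of Proposition \ref{prop:duWdt}); your route buys a self-contained probabilistic explanation of why no density assumption on $W\in\mathcal{T}$ is needed.
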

\begin{proof}
As noted above, it follows from the construction of the process $Z_W^*$ that 
\begin{itemize}
 \item $t\mapsto Z_W^*(t;x)$ is right continuous with left limits for any fixed $x\in \R$, and
 \item $x\mapsto Z_W^*(t;x)$ is right continuous with left limits for any fixed $t > 0$. 
\end{itemize}
From these two facts and the bounded convergence theorem it follows that $u_W(t,x)$ is right continuous with left limits in $t$ for any fixed $x$ and also right continuous with left limits in $x$ for any fixed $t$. 

To show that $u_W(t,x)$ is also left continuous in $t$, first note that \eqref{ZWssm} implies that 
\begin{equation}\label{uWrightx}
\begin{split}
 u_W(t,x+h) &= \Ev\left[ u(Z_W^*(t;x+h)) \ind{\tau_W((x,x+h]) > t} \right] \\
&\qquad + \Ev\left[ u_W(t-\tau_W((x,x+h]), x) \ind{\tau_W((x,x+h]) \leq t} \right].
\end{split}
\end{equation}
Since $u$ (and thus also $u_W$) is bounded and $\tau_W((x,x+h]) \ra 0$ as $h\ra 0^+$, we can thus conclude from the Bounded Convergence Theorem that 
\[
 u_W(t,x) = \lim_{h\ra 0^+} u_W(t,x+h) = u_W(t-,x), 
\]
where the first equality is from the right continuity in $x$ that was proved above. This completes the proof that $u_W(t,x)$ is continuous in $t$. 
\end{proof}
\begin{rem}
 If the trap environment $W \in \mathcal{T}'$, then the fact that $u_W(t,x)$ is continuous in $t$ follows more easily from the fact noted above that $Z_W^*(t;x)$ is continuous in $t$ for any fixed $x\in \R$ when $W \in \mathcal{T}'$. 
\end{rem}

Before proving the differentiability properties of $u_W$, we need the following Lemma which gives a probabilistic formulation for $\lim_{h\ra 0^+} u_W(t,x-h)$. 
\begin{lem}\label{lem:uWc}
 Let $Z_W^\circ(t;x) = Z_W^*(\tau_W(\{x\}) + t;x)$. 
Informally, $t\mapsto Z_W^\circ(t;x)$ is the path of the (left) directed trap process started just after leaving site $x$. 
Then, 
\begin{equation}\label{uWcdef}
 u_W^\circ(t,x) := \Ev[ u(Z_W^\circ(t;x))] =  \lim_{h \ra 0^+} u_W(t,x-h), \quad \forall t\geq 0, \, x \in \R. 
\end{equation}
Moreover, the function $u_W^\circ(t,x)$ is continuous in $t$ for any fixed $x \in \R$. 
\end{lem}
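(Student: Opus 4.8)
The plan is to establish the two claims in Lemma~\ref{lem:uWc} — the identity \eqref{uWcdef} and the continuity of $u_W^\circ(\cdot,x)$ in $t$ — largely by mimicking the structure of the proof of Lemma~\ref{lem:uWcont}, but now keeping careful track of the (possible) atom of $\tau_W$ at the point $x$ itself. The key observation is that $Z_W^*(t;x)$ ``sees'' the waiting time $\tau_W(\{x\}) = y_k\zeta_k$ at $x$ (when $x = x_k \in \mathcal{J}_W$), whereas approaching $x$ from the left one never pays that waiting time; the process $Z_W^\circ(t;x) := Z_W^*(\tau_W(\{x\})+t;x)$ is precisely the path that has already left $x$, and so is the natural candidate for the left limit.

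First I would prove \eqref{uWcdef}. Fix $t \geq 0$ and $x \in \R$, and for $h>0$ consider $Z_W^*(t;x-h)$. Using the strong Markov-type property \eqref{ZWssm} with $z = x$ (valid since $x-h < x$), write
\begin{equation}\label{plan:decomp}
\begin{split}
u_W(t,x-h) &= \Ev\left[ u(Z_W^*(t;x-h))\ind{\tau_W((x-h,x]) > t} \right] \\
&\qquad + \Ev\left[ u_W^\circ\!\left(t - \big(\tau_W((x-h,x]) - \tau_W(\{x\})\big),\, x\right)\ind{\tau_W((x-h,x]) \leq t}\right],
\end{split}
\end{equation}
after noting that on the event $\{\tau_W((x-h,x]) \le t\}$ the process, having entered the interval, crosses all of $(x-h,x]$ (including the wait at $x$) before time $t$ only if $\tau_W((x-h,x]) \le t$, and that once it has left $x$ its remaining evolution is distributed as $Z_W^\circ(\cdot\,;x)$ run for the residual time $t - \tau_W((x-h,x)) = t - (\tau_W((x-h,x]) - \tau_W(\{x\}))$. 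As $h \ra 0^+$ we have $\tau_W((x-h,x]) \ra \tau_W(\{x\})$ almost surely (the measure $\tau_W$ is Radon and $(x-h,x]\downarrow\{x\}$), so the residual time converges to $t - 0 = t$ on the limiting event, the first term vanishes (its indicator event probability tends to $0$ when $\tau_W(\{x\})\le t$, or, when $\tau_W(\{x\})>t$, the first term is exactly $u_W^\circ$ evaluated trivially — here one checks the boundary case directly), and boundedness of $u$ together with the Bounded Convergence Theorem yields $\lim_{h\ra 0^+} u_W(t,x-h) = \Ev[u(Z_W^\circ(t;x))] = u_W^\circ(t,x)$. A little care is needed at $t = 0$ and on the null event where $\tau_W((x-h,x]) = t$ exactly, but by right-continuity in $t$ of the relevant quantities these measure-zero coincidences cause no trouble.

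For the continuity of $t \mapsto u_W^\circ(t,x)$, I would argue as in Lemma~\ref{lem:uWcont}. Right-continuity with left limits in $t$ follows from the corresponding property of $t\mapsto Z_W^*(s;x)$ (hence of $t \mapsto Z_W^\circ(t;x) = Z_W^*(\tau_W(\{x\})+t;x)$) and the Bounded Convergence Theorem. For left-continuity, one again uses a shift identity: for $h>0$, condition on the amount of $\tau_W$-time spent in the first small interval to the right of $x$ and express $u_W^\circ(t,x)$ in terms of $u_W^\circ(t - (\text{small time}), \cdot)$ at a nearby point, exactly as \eqref{uWrightx} did; since that small time tends to $0$ and $u$ is bounded, the Bounded Convergence Theorem forces $u_W^\circ(t-,x) = u_W^\circ(t,x)$. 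Alternatively, and more cleanly, one can note that $u_W^\circ(t,x) = \lim_{h\ra 0^+}u_W(t,x-h)$ is a pointwise limit of functions each continuous in $t$; this alone does not give continuity, but combined with monotonicity/uniformity coming from the coupling (the maps $h \mapsto Z_W^*(t;x-h)$ are monotone, and the convergence is controlled uniformly on compact $t$-intervals because $\tau_W((x-h,x]) \ra \tau_W(\{x\})$ uniformly in bounded $t$), one upgrades to continuity.

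The main obstacle I expect is the bookkeeping in \eqref{plan:decomp}: correctly isolating the waiting time at $x$ and verifying that, on the event $\{\tau_W((x-h,x]) \le t\}$, the post-$x$ trajectory really is an independent copy of $Z_W^\circ(\cdot\,;x)$ started afresh at the residual time — this requires invoking the coupling definition \eqref{ZWdef} of $Z_W^*$ directly rather than a clean strong Markov statement, since $\mathcal{J}_W$ may be dense. Once that decomposition is pinned down, both the identity and the continuity are routine Bounded-Convergence arguments. The secondary subtlety is handling $t=0$ and the a.s.-negligible coincidence events $\{\tau_W((x-h,x]) = t\}$, which I would dispose of by first proving the statement for $t$ outside a countable exceptional set (or for all $t$ using right-continuity) and then extending by the continuity just established.
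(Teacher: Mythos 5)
There is a genuine gap, and it lies in your decomposition \eqref{plan:decomp}. The process $Z_W^*(\cdot\,;x-h)$ moves \emph{left} and never visits $x$, so the narrative behind your second term (``having entered the interval, crosses all of $(x-h,x]$, including the wait at $x$'') describes the right-moving process $Z_W$, not $Z_W^*$. The correct relation coming from \eqref{ZWssm} with $z=x$ is a pure time shift, $Z_W^*(t;x-h)=Z_W^*\bigl(t+\tau_W((x-h,x]);x\bigr)$ — note the \emph{plus} sign, whereas your residual time carries a minus sign. Worse, even after fixing the sign you cannot replace the post-$x$ trajectory by the deterministic function $u_W^\circ$ evaluated at a random time: writing $Z_W^*(t+\tau_W((x-h,x]);x)=Z_W^\circ(t+\tau_W((x-h,x));x)$, the shift $\tau_W((x-h,x))$ is exactly the time the post-$x$ path $Z_W^\circ(\cdot\,;x)$ spends at the traps in $(x-h,x)$, so the two are \emph{not} independent; this is precisely the independence that made \eqref{uWrightx} legitimate (there $\tau_W((x,x+h])$ involves only traps strictly to the right of $x$, independent of $Z_W^*(\cdot\,;x)$), and it fails here. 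Finally, your claim that the first term vanishes is false when $x=x_k\in\mathcal{J}_W$, since $\Pv(\tau_W((x-h,x])>t)\to e^{-t/y_k}>0$. None of this machinery is needed: the paper simply writes $u_W(t,x-h)=\Ev[u(Z_W^*(t+\tau_W((x-h,x]);x))]$, uses $\tau_W((x-h,x])\downarrow\tau_W(\{x\})$ a.s. and the right continuity of $s\mapsto Z_W^*(s;x)$, and concludes by bounded convergence.

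Your continuity-in-$t$ argument is also incomplete. A pointwise limit of continuous functions need not be continuous, and the uniformity you invoke (``$\tau_W((x-h,x])\to\tau_W(\{x\})$ uniformly in bounded $t$'') is vacuous since that quantity does not depend on $t$; you would need uniform-in-$t$ convergence of $u_W(t,x-h)$ to $u_W^\circ(t,x)$, which you have not established, and your sketched analogue of \eqref{uWrightx} for $u_W^\circ$ is not justified. The paper's route is short: for $x\notin\mathcal{J}_W$ one has $u_W^\circ=u_W$ and Lemma \ref{lem:uWcont} applies directly, while for $x_k\in\mathcal{J}_W$ one removes the atom, setting $\widetilde{W}_k=W-\d_{(x_k,y_k)}$, observes $Z_W^\circ(t;x_k)=Z_{\widetilde{W}_k}^*(t;x_k)$, hence $u_W^\circ(t,x_k)=u_{\widetilde{W}_k}(t,x_k)$, and again invokes Lemma \ref{lem:uWcont}.
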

\begin{rem}\label{rem:uWuWc}
 Note that if $x \notin \mathcal{J}_W$, then $\tau_W(\{x\}) = 0$ and so 
 $Z_W^\circ(t;x) = Z_W^*(t;x)$ and $u_W^\circ(t,x) = u_W(t,x)$ for all $t\geq 0$. 
\end{rem}

\begin{proof}
The Markov-like property in \eqref{ZWssm} implies that 
\[
 u_W(t,x-h) = \Ev\left[u\left(Z_W^*(t+\tau_W((x-h,x]);x)\right)\right]. 
\]
The limit in \eqref{uWcdef} then follows from the Bounded Convergence Theorem along with the fact that $\tau_W((x-h,x]) \ra \tau_W(\{x\})$ almost surely as $h \ra 0^+$ and $Z_W^*(\cdot;x)$ is right continuous.  
If $x \notin \mathcal{J}_W$, then it follows from Lemma \ref{lem:uWcont} that $u_W^\circ(t,x)$ is continuous in $t$ since as noted in Remark \ref{rem:uWuWc} above $u_W^\circ(t,x) = u_W(t,x)$ for all $t\geq 0$ in this case.
It remains to show that $u_W^\circ(t,x_k)$ is continuous in $t$ for any fixed $x_k \in \mathcal{J}_W$. 
However, for $x_k \in \mathcal{J}_W$ if we define the trap environment $\widetilde{W}_k = W - \d_{(x_k,y_k)}$ (that is remove the trap at spatial location $x_k$ from the trap environment $W$) then it is easy to see that $Z_W^\circ(t,x_k) = Z_{\widetilde{W}_k}^*(t,x_k)$. Therefore, $u_W^\circ(t,x_k) = u_{\widetilde{W}_k}(t,x_k)$ and it follows from Lemma \ref{lem:uWcont} that $u_W^\circ(t,x_k)$ is continuous in $t$. 
\end{proof}

We are now ready to prove some differentiability properties of $u_W$. 

\begin{prop}\label{prop:duWdt}
For any $W \in \mathcal{T}$, the function $u_W(t,x_k)$ is differentiable in $t$ for any $x_k \in \mathcal{J}_W$ and 
\begin{equation}\label{duWdteq1}
\frac{\del}{\del t} u_W(t,x_k) = - \lim_{h \ra 0^+} \frac{u_W(t,x_k) - u_W(t,x_k-h)}{\s_W((x_k-h,x_k])}, \quad \forall x_k \in \mathcal{J}_W, \, t \geq 0. 
\end{equation}
Moreover, if $W \in \mathcal{T}'$ then we also have
\begin{equation}\label{duWdteq2}
 \frac{\del}{\del t} u_W(t,x_k) = - \lim_{h\ra 0^+}  \frac{u_W(t,x_k+h) - u_W(t,x_k)}{\s_W((x_k,x_k+h])}, \quad \forall x_k \in \mathcal{J}_W, \, t > 0. 
\end{equation}

\end{prop}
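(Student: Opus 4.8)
The plan is to compute the $t$-derivative of $u_W(t,x_k)$ directly from the probabilistic representation \eqref{uWdef} by conditioning on the (exponential) waiting time $\tau_W(\{x_k\}) = y_k \zeta_k$ that the left-directed process $Z_W^*$ spends at the trap $x_k$ before it is carried strictly to the left of $x_k$. Fix $x_k \in \mathcal{J}_W$ and recall $\tau_W(\{x_k\}) = y_k \zeta_k$ with $\zeta_k \sim \mathrm{Exp}(1)$. First I would write, using the memoryless property and the strong-Markov-like identity \eqref{ZWssm},
\begin{equation*}
 u_W(t,x_k) = \Ev\left[ u(x_k) \ind{y_k\zeta_k > t} \right] + \Ev\left[ u_W^\circ\!\left(t - y_k\zeta_k, x_k\right) \ind{y_k\zeta_k \leq t} \right],
\end{equation*}
where $u_W^\circ(s,x_k) = \Ev[u(Z_W^\circ(s;x_k))]$ is the function from Lemma \ref{lem:uWc}; here I use that conditioned on leaving $x_k$ at time $s \le t$, the remaining path has the law of $Z_W^\circ(\cdot;x_k)$ started at time $s$, and that $Z_W^*(t;x_k) = x_k$ on the event $\{y_k\zeta_k > t\}$. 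Since $y_k\zeta_k$ is $\mathrm{Exp}(1/y_k)$, this reads
\begin{equation*}
 u_W(t,x_k) = u(x_k) e^{-t/y_k} + \int_0^t u_W^\circ(t - s, x_k)\, \frac{1}{y_k} e^{-s/y_k}\, ds
 = e^{-t/y_k}\left( u(x_k) + \frac{1}{y_k}\int_0^t u_W^\circ(r,x_k) e^{r/y_k}\, dr \right),
\end{equation*}
after the substitution $r = t-s$. The continuity of $u_W^\circ(\cdot,x_k)$ in $t$ (Lemma \ref{lem:uWc}) makes the integrand continuous, so the right-hand side is differentiable in $t$, and differentiating gives
\begin{equation*}
 \frac{\del}{\del t} u_W(t,x_k) = -\frac{1}{y_k}\, u_W(t,x_k) + \frac{1}{y_k}\, u_W^\circ(t,x_k) = -\frac{1}{y_k}\left( u_W(t,x_k) - u_W^\circ(t,x_k) \right).
\end{equation*}
Now I would invoke Lemma \ref{lem:uWc}, which identifies $u_W^\circ(t,x_k) = \lim_{h\to 0^+} u_W(t,x_k-h)$, together with $\s_W(\{x_k\}) = y_k$ and $\s_W((x_k-h,x_k]) \to \s_W(\{x_k\}) = y_k$ as $h \to 0^+$ (from \eqref{sWdef} and property (4) of $\mathcal{T}$, the small-trap mass in $(x_k-h,x_k)$ vanishes). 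Hence
\begin{equation*}
 \frac{\del}{\del t} u_W(t,x_k) = -\frac{u_W(t,x_k) - \lim_{h\to 0^+} u_W(t,x_k-h)}{y_k} = -\lim_{h\to 0^+}\frac{u_W(t,x_k) - u_W(t,x_k-h)}{\s_W((x_k-h,x_k])},
\end{equation*}
which is \eqref{duWdteq1}; note this covers $t \ge 0$ since the computation and the continuity of $u_W^\circ$ hold down to $t=0$.

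For the second identity \eqref{duWdteq2}, valid when $W \in \mathcal{T}'$, the idea is to rewrite \eqref{uWrightx}: for $h>0$,
\begin{equation*}
 u_W(t,x_k+h) = \Ev\left[ u(Z_W^*(t;x_k+h)) \ind{\tau_W((x_k,x_k+h]) > t} \right] + \Ev\left[ u_W(t - \tau_W((x_k,x_k+h]), x_k) \ind{\tau_W((x_k,x_k+h]) \leq t} \right].
\end{equation*}
When $W \in \mathcal{T}'$, $\tau_W((x_k,x_k+h])$ is a.s.\ strictly positive and tends to $0$ a.s.\ as $h \to 0^+$, and $\s_W((x_k,x_k+h]) = \Ev[\tau_W((x_k,x_k+h])] \to 0$. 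On the event $\{\tau_W((x_k,x_k+h]) > t\}$ (which has vanishing probability as $h\to0^+$ for $t>0$) the first term contributes $o(\s_W((x_k,x_k+h]))$; on the complement, a first-order Taylor expansion in $t$ using \eqref{duWdteq1} — which we have just proved gives a genuine derivative — yields
\begin{equation*}
 u_W(t,x_k+h) - u_W(t,x_k) = -\frac{\del}{\del t} u_W(t,x_k)\, \Ev[\tau_W((x_k,x_k+h])] + o\big(\s_W((x_k,x_k+h])\big),
\end{equation*}
and dividing by $\s_W((x_k,x_k+h]) = \Ev[\tau_W((x_k,x_k+h])]$ and letting $h\to0^+$ gives \eqref{duWdteq2}. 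Making the error term rigorous requires care: one controls the increment of $u_W$ in $t$ by a modulus-of-continuity argument (uniform continuity of $u_W^\circ(\cdot,x_k)$ on compacts, which feeds the derivative bound), and one must check the contribution of the rare event $\{\tau_W((x_k,x_k+h]) > t\}$ is negligible — this is where $t>0$ is used.

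The main obstacle I anticipate is the rigorous justification of the error estimate in the derivation of \eqref{duWdteq2}: one needs a quantitative, $h$-uniform bound on $|u_W(t-\tau_W((x_k,x_k+h]),x_k) - u_W(t,x_k) + \tau_W((x_k,x_k+h])\,\partial_t u_W(t,x_k)|$ that, after taking expectations, is $o(\Ev[\tau_W((x_k,x_k+h])])$. This is essentially a dominated-convergence argument combined with the fact that $\partial_t u_W(\cdot,x_k)$ is itself continuous (which follows by differentiating the explicit formula above once more, using continuity of $u_W^\circ$); the subtlety is that $\tau_W((x_k,x_k+h])$ has an unbounded tail, so one splits on $\{\tau_W((x_k,x_k+h]) \le \delta\}$ versus its complement and uses that the excluded mass $\Ev[\tau_W((x_k,x_k+h])\ind{\tau_W((x_k,x_k+h])>\delta}]$ is $o(\Ev[\tau_W((x_k,x_k+h])])$ as $h\to0^+$ for each fixed $\delta$, then sends $\delta\to0$. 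By contrast, \eqref{duWdteq1} is elementary once the conditioning identity and Lemma \ref{lem:uWc} are in hand.
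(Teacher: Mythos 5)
Your proposal follows essentially the same route as the paper: the same renewal representation of $u_W(t,x_k)$ obtained by conditioning on $\tau_W(\{x_k\})\sim\mathrm{Exp}(1/y_k)$, differentiation using continuity of $u_W^\circ$, and Lemma \ref{lem:uWc} together with $\s_W((x_k-h,x_k])\ra y_k$ for \eqref{duWdteq1}; and the same decomposition \eqref{uWrightx} with a truncation of $\tau_h=\tau_W((x_k,x_k+h])$ for \eqref{duWdteq2}. The quantitative facts you leave implicit, namely that $\Pv(\tau_h>t)$ and $\Ev\left[\tau_h \ind{\tau_h>\delta}\right]$ are $o\left(\s_W((x_k,x_k+h])\right)$, are exactly what the paper supplies via $\Ev[\tau_h]=\s_h$, $\Varv(\tau_h)\le\s_h^2$ and Chebyshev's inequality, so your sketch fills in correctly.
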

\begin{proof}
First of all, by conditioning on the value of $\tau_W(\{x_k\}) \sim \text{Exp}(1/y_k)$ and recalling the definition of $u_W^\circ$ in \eqref{uWcdef} we have that 
\begin{align}
 u_W(t,x_k) 
&= e^{-t/y_k} u(x_k) + \int_0^t \frac{1}{y_k} e^{-s/y_k} u_W^\circ(t-s,x_k) \, ds \nonumber \\
&= e^{-t/y_k} u(x_k) + e^{-t/y_k} \int_0^t \frac{1}{y_k} e^{s/y_k} u_W^\circ(s,x_k) \, ds.  \label{uWibp}
\end{align}
From this representation of $u_W(t,x_k)$, and using the fact that $u_W^\circ(s,x_k)$ is continuous in $s$, it is easy to conclude that $u_W(t,x)$ is differentiable in $t$ with derivative given by 
\begin{equation}\label{duWdteq3}
 \frac{\del}{\del t} u_W(t,x_k) = \frac{- u_W(t,x_k) + u_W^\circ(t,x_k)}{y_k}. 
\end{equation}
The equality in \eqref{duWdteq1} then follows from Lemma \ref{lem:uWc} and the fact that $\s_W((x_k-h,x_k]) \ra y_k$ as $h\ra 0^+$. 

Before giving the proof of \eqref{duWdteq2} when $W \in \mathcal{T}'$, we introduce some notation that will be convenient. For $W$ and $x_k \in \mathcal{J}_W$ fixed, let 
\[
 \tau_h = \tau_W((x_k,x_k+h]), \quad \text{and} \quad \s_h = \s_W((x_k,x_k+h]). 
\]
Since $\tau_h = \sum_{\ell: x_\ell \in (x_k,x_k+h]} y_\ell \zeta_\ell$, with $\zeta_\ell$ i.i.d.\ Exp(1) random variables, it is easy to see that 
\begin{equation}\label{tauhmoments}
 \Ev[\tau_h] = \s_h \quad\text{and}\quad 
\Varv(\tau_h) \leq \s_h^2. 
\end{equation}
Having introduced this notation, we now turn to evaluating the limit on the right side of \eqref{duWdteq2}. 
To this end, we first note that \eqref{uWrightx} implies  
\begin{equation}\label{uWdiscrete}
\begin{split}
\frac{u_W(t,x_k+h) - u_W(t,x_k)}{\s_h} 
& = \frac{\Ev\left[ u(Z_W^*(t;x_k+h)) \ind{\tau_{h} > t} \right]}{\s_h} - \frac{\Pv( \tau_{h} > t )u_W(t,x_k)}{\s_h}  \\
&\qquad - \Ev\left[ \frac{u_W(t,x_k) - u_W(t-\tau_{h},x_k)}{\s_h}  \ind{\tau_h \leq t} \right]. 
\end{split}
\end{equation}
Since $u$ and $u_W$ are uniformly bounded above, to show the first two terms on the right in \eqref{uWdiscrete} vanish as $h\ra 0^+$ it is sufficient to note that 
\[
 \limsup_{h\ra 0^+} \frac{\Pv( \tau_{h} > t )}{ \s_h } \leq \limsup_{h\ra 0^+} \frac{\Pv( \left| \tau_{h} - \s_h \right| > t - \s_h )}{ \s_h } 
\leq \limsup_{h\ra 0^+} \frac{ \Varv( \tau_h )}{\s_h (t-\s_h)^2} 
= 0, 
\]
where in the last equality we used that $\Varv( \tau_h ) \leq \s_h^2$ by \eqref{tauhmoments} and that $\s_h \ra 0$ as $h\ra 0^+$. 
It remains to show that the last term on the right in \eqref{uWdiscrete} tends to $- \frac{\del}{\del t} u_W(t,x_k)$ as $h\ra 0^+$. 
To this end, first note that \eqref{duWdteq3} and the mean value theorem imply that 
\[
 \left| \frac{u_W(t,x_k) - u_W(t-a,x_k)}{a} \right| \leq \frac{2 \|u\|_\infty}{y_k}, \quad \text{for all } a \in [0,t). 
\]
Since $\Ev[\tau_h] = \s_h$, we can conclude that 
\begin{align*}
 & \left| \Ev\left[ \frac{u_W(t,x_k) - u_W(t-\tau_{h},x_k)}{\s_h} \ind{\tau_h \leq t} \right] - \frac{\del}{\del t} u_W(t,x_k) \right| \\
&= \left| \Ev\left[ \frac{u_W(t,x_k) - u_W(t-\tau_{h},x_k)}{\tau_h} \frac{\tau_h}{\s_h} \ind{\tau_h \leq t} \right] - \frac{\del}{\del t} u_W(t,x_k) \Ev\left[ \frac{\tau_h}{\s_h} \right] \right| \\
&\leq \Ev\left[ \left|  \frac{u_W(t,x_k) - u_W(t-\tau_{h},x_k)}{\tau_h} - \frac{\del}{\del t} u_W(t,x_k) \right| \frac{\tau_h}{\s_h} \ind{\tau_h \leq \s_h^{1/2}} \right] 
+ \frac{4 \|u\|_\infty}{y_k} \frac{ \Ev\left[\tau_h \ind{\tau_h > \s_h^{1/2}} \right]}{\s_h} \\
&\leq \sup_{0<\e \leq \s_h^{1/2}} \left|  \frac{u_W(t,x_k) - u_W(t-\e,x_k)}{\e} - \frac{\del}{\del t} u_W(t,x_k) \right| 
+ \frac{4 \|u\|_\infty \s_h^{1/2}}{y_k },
\end{align*}
where in the last inequality we used that $ \Ev[\tau_h \ind{\tau_h > \s_h^{1/2}} ] \leq \frac{ \Ev[ \tau_h^2 ] }{\s_h^{1/2}} \leq 2 \s_h^{3/2}$.
Since $\s_h \ra 0$ as $h\ra 0^+$, both terms in the last line above vanish as $h\ra 0^+$. 
Recalling \eqref{uWdiscrete}, this completes the proof of \eqref{duWdteq2}. 
\end{proof}

\section{Proof of the hydrodynamic limit for directed traps}\label{sec:dthdl}

In this section we will give the proof of the hydrodynamic limit for the systems of independent directed trap particles as stated in Theorem \ref{th:dthdl}. 
As a first step toward the proof of the theorem, we prove the following lemma which helps explain the appearance of the function $u_W(t,x)$. 

\begin{lem}\label{lem:dual}
 If $W = \sum_k \d_{(x_k,y_k)} \in \mathcal{T}$ is a trap environment and $\eta_t^W$ is a system of independent trap particles with initial configuration that is product Poisson with $\Ev[\eta_0^W(x_k) ] = u(x_k) y_k$, then for any $t> 0$ the configuration $\{\eta_t^W(x_k)\}_k$ is also product Poisson but with 
\[
 \Ev[\eta_t^W(x_k)] = u_W(t,x_k) y_k. 
\]
\end{lem}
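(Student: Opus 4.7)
The plan is to combine a Poisson thinning argument with a direct time-reversal identity between $Z_W$ and $Z_W^*$.

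\textbf{Step 1: Product Poisson structure via thinning.}
For each source site $x_\ell$ I would define the contribution
\[
  N_\ell^k := \sum_{j=1}^{\eta_0^W(x_\ell)} \ind{Z_W^{\ell,j}(t)=x_k}, \qquad k\in\Z,
\]
so that $\eta_t^W(x_k)=\sum_\ell N_\ell^k$. Conditionally on $\eta_0^W(x_\ell)$, the trajectories $Z_W^{\ell,j}$ are i.i.d.\ with law $Z_W(\cdot\,;x_\ell)$, so the Poisson splitting/marking theorem applied to the Poisson$(u(x_\ell)y_\ell)$ variable $\eta_0^W(x_\ell)$ yields that $\{N_\ell^k\}_{k\in\Z}$ are independent and $N_\ell^k\sim\mathrm{Poisson}\bigl(u(x_\ell)y_\ell\Pv(Z_W(t;x_\ell)=x_k)\bigr)$. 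Here I use that $Z_W(t;x_\ell)\in\mathcal{J}_W$ a.s.\ (so the splitting probabilities sum to one), which follows from property (3) of Definition~\ref{def:T}. Because different sources $\ell$ are independent by construction, the whole double array $\{N_\ell^k\}_{\ell,k}$ is independent, and so the column sums $\eta_t^W(x_k)=\sum_\ell N_\ell^k$ are independent across $k$. Once I establish in Step~2 that the total mean is finite, each $\eta_t^W(x_k)$ is Poisson with mean $\sum_\ell u(x_\ell)y_\ell\Pv(Z_W(t;x_\ell)=x_k)$.

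\textbf{Step 2: Time-reversal identity.} The heart of the proof is the identity
\begin{equation}\label{planDuality}
  y_\ell\,\Pv(Z_W(t;x_\ell)=x_k) \;=\; y_k\,\Pv(Z_W^*(t;x_k)=x_\ell),\qquad \ell,k\in\Z,\ t\ge 0.
\end{equation}
I would prove \eqref{planDuality} by a direct computation using the construction \eqref{ZWdef}. Setting $R=\tau_W((x_\ell,x_k))=\sum_{j:\,x_\ell<x_j<x_k}y_j\zeta_j$, one sees from the definitions of $Z_W$ and $Z_W^*$ that
\[
  \{Z_W(t;x_\ell)=x_k\} = \{R+y_\ell\zeta_\ell\le t<R+y_\ell\zeta_\ell+y_k\zeta_k\},
\]
\[
  \{Z_W^*(t;x_k)=x_\ell\} = \{R+y_k\zeta_k\le t<R+y_k\zeta_k+y_\ell\zeta_\ell\},
\]
where the three pieces $R$, $\zeta_\ell$, $\zeta_k$ are independent. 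Conditioning on $R$ and integrating over the $\mathrm{Exp}(1)$ variables $\zeta_\ell$ and $\zeta_k$, the substitution $u=y_\ell\zeta_\ell$ (resp.\ $u=y_k\zeta_k$) gives
\[
  y_\ell\Pv(Z_W(t;x_\ell)=x_k) = \Ev\!\left[\ind{R\le t}\!\int_0^{t-R}\!\!e^{-(t-R-u)/y_k-u/y_\ell}\,du\right],
\]
and the same calculation for the left-directed process produces the same expression with $y_\ell$ and $y_k$ exchanged; the change of variable $v=(t-R)-u$ then shows the two are equal.

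\textbf{Step 3: Assembly.} Multiplying \eqref{planDuality} by $u(x_\ell)$ and summing over $\ell$,
\[
  \sum_\ell u(x_\ell)y_\ell\Pv(Z_W(t;x_\ell)=x_k)
  = y_k\sum_\ell u(x_\ell)\Pv(Z_W^*(t;x_k)=x_\ell)
  = y_k\,\Ev\!\left[u(Z_W^*(t;x_k))\right]
  = y_k u_W(t,x_k),
\]
which is finite since $u$ is bounded and $Z_W^*(t;x_k)\in\mathcal{J}_W$ a.s. This is simultaneously the justification needed to complete Step~1 (the total mean is finite) and the claimed expression for $\Ev[\eta_t^W(x_k)]$.

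The only real obstacle is the duality \eqref{planDuality}; the Poisson structure is standard thinning. Because $W$ may lie in $\mathcal{T}$ with $\mathcal{J}_W$ dense in $\R$, an abstract time-reversal-of-a-Markov-chain argument would require extra work, so I prefer the explicit integral computation above, which handles the general case uniformly by exploiting that only the mass $\tau_W((x_\ell,x_k))$ together with the two boundary exponentials $y_\ell\zeta_\ell$, $y_k\zeta_k$ enter.
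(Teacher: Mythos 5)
Your proof is correct and follows essentially the same route as the paper: Poisson thinning/superposition gives the product Poisson structure, and the key duality $y_\ell\Pv(Z_W(t;x_\ell)=x_k)=y_k\Pv(Z_W^*(t;x_k)=x_\ell)$ is established by the same conditioning on $\tau_W((x_\ell,x_k))$ followed by a change of variable in the exponential integral. Your explicit event identities for $\{Z_W(t;x_\ell)=x_k\}$ and $\{Z_W^*(t;x_k)=x_\ell\}$ make the symmetry perhaps slightly more transparent, but the computation is the one in the paper.
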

\begin{rem}\label{rem:dual}
 Note that under the assumptions of Theorem \ref{th:dthdl} we have that 
\[
 \Ev[\eta_t^n(x_k^n)] = a_n u_{W_n}(t,x_k^n) y_k^n.
\]
In this application we have replaced $W$ by $W_n$ and $u(x)$ by $a_n u(x)$. 
\end{rem}
\begin{proof}
 First of all, since the initial configuration is product Poisson and the particles all move independently then it is a simple consequence of the thinning and superposition properties of Poisson random variables that the configuration at any fixed later time is also product Poisson, so it only remains to prove the formula for $\Ev\left[ \eta_t^W(x_k) \right]$. 

To this end, we first claim that 
the following time reversal property holds. 
\begin{equation}\label{duality}
 y_\ell \Pv(Z_W(t;x_\ell) = x_k ) = y_k \Pv(Z_W^*(t;x_k) = x_\ell ), \quad \text{ if } x_\ell \leq x_k. 
\end{equation}
Indeed, it is easy to see from the construction of the processes $Z_W$ and $Z_W^*$ that for any $s \in [0,t]$, 
\begin{align*}
 \Pv\left(Z_W(t;x_\ell) = x_k \bigl| \, \tau_W((x_\ell,x_k)) = s \right)
&= \int_0^{t-s} \frac{1}{y_\ell} e^{-u/y_\ell} e^{-(t-s-u)/y_k} \, du,
\end{align*}
and
\begin{align*}
\Pv\left(Z_W^*(t;x_k) = x_\ell \bigl| \, \tau_W((x_\ell,x_k)) = s \right)
&= \int_0^{t-s} \frac{1}{y_k} e^{-u/y_k} e^{-(t-s-u)/y_\ell} \, du, \\
&= \int_0^{t-s} \frac{1}{y_k} e^{-(t-s-v)/y_k} e^{-v/y_\ell} \, dv, 
\end{align*}
where the last equality follows from the substitution $v=t-s-u$. 
Comparing these formulas and then averaging over all possible values of $\tau_W((x_\ell,x_k))$ we obtain \eqref{duality}.

Next, since $\eta_t^W(x_k) = \sum_{\ell: x_\ell \leq x_k} \sum_{j=1}^{\eta_0^W(x_\ell)} \ind{ Z_W^j(t;x_\ell) = x_k}$, by conditioning on the initial configuration it is easy to see that 
\[
\Ev[\eta_t^W(x_k)] = \sum_{\ell: x_\ell \leq x_k} \Ev[\eta_0^W(x_\ell)] \Pv(Z_W(t;x_\ell) = x_k) = \sum_{\ell: x_\ell \leq x_k} u(x_\ell) y_\ell \Pv(Z_W(t,x_\ell) = x_k)
\]
Applying \eqref{duality} we then obtain that 
\[
 \Ev[\eta_t^W(x_k)] = \sum_{\ell: x_\ell \leq x_k} u(x_\ell) y_k \Pv(Z_W^*(t;x_k) = x_\ell) = y_k \Ev[ u(Z_W^*(t;x_k)) ],
\]
which, recalling the definition of the function $u_W$, is the claimed formula for the mean. 
\end{proof}

Next we introduce some notation that we will use in the proof of Theorem \ref{th:dthdl}. 
First of all, let  
\[
 \pi_t^n = \frac{1}{a_n} \sum_{k \in \Z} \eta_t^n(x_k^n) \d_{x_k^n}. 
\]
denote the re-scaled empirical measure of the particle configuration $\eta_t^n$. 
With this notation, the limit \eqref{dthdllim} in the statement of Theorem \ref{th:dthdl} becomes
\begin{equation}\label{dthdllim2}
 \lim_{n\ra\infty} \int_{\R_+} \langle \phi_t, \pi_t^n \rangle \, dt = \int_{\R^+} \int_\R \phi(t,x) u_W(t,x) \s_W(dx) \, dt,  \quad \text{in $\Pv$-probability,}
\end{equation}
where on the left we use $\phi_t$ for the function $\phi(t,\cdot)$ for any $t\geq 0$ fixed, and where $\langle f, \mu \rangle$ denotes integration of a function $f$ with respect to a measure $\mu$. 
One of the complicating factors in proving the limit in \eqref{dthdllim2} is that since the limiting trap environment $W$ is dense in $\R$ there are infinitely many traps in every interval in the limit. 
We will get around this difficulty by a standard truncation of the trap environment and then by controlling the error induced by this truncation. 
To this end, for $\e>0$ let
\[
 W_n^{(\e)} = \sum_{k \in \Z} \d_{(x_k^n,y_k^n)} \ind{y_k^n \geq \e}
\quad\text{and}\quad
 W^{(\e)} = \sum_{k \in \Z} \d_{(x_k,y_k)} \ind{y_k \geq \e}. 
\]
We can then expand the measure $\Pv$ to also include systems of directed trap particles $\eta_t^{n,\e}$ in the truncated environments $W_n^{(\e)}$. 
The initial configurations for the systems in the truncated environments will again be product Poisson and with $\eta_0^{n,\e}(x_k^n) \sim \text{Poisson}(a_n u(x_k^n) y_k^n)$ whenever the atom $(x_k^n,y_k^n)$ of $W_n$ has $y_k^n \geq \e$, and the corresponding empirical measure of the particle system will be denoted $\pi_t^{n,\e}$. 
We will use the truncated trap environments to prove Theorem \ref{th:dthdl} by first proving a hydrodynamic limit for the systems in the truncated trap environments. That is, we will show that 
\begin{equation}\label{tsystemconv}
 \lim_{n\ra\infty} \int_{\R_+} \langle \phi_t ,\, \pi_t^{n,\e} \rangle \, dt = \int_{\R_+} \int_\R \phi(t,x) u_{W^{(\e)}}(t,x) \s_{W^{(\e)}}(dx)   \, dt, \quad\text{in $\Pv$-probability}, 
\end{equation} 
for arbitrarily small $\e>0$. 
In order to conclude that the corresponding limit \eqref{dthdllim2} holds for the original sequence of trap environments we then need to show that the errors introduced by the truncation of the trap environments are small. That is, we will need to show that
\begin{equation}\label{WeWconv}
\lim_{\e\ra 0} \sup_{t\leq T} \left|  \int_\R \phi(t,x) u_{W^{(\e)}}(t,x) \s_{W^{(\e)}}(dx) - \int_\R \phi(t,x) u_{W}(t,x) \s_{W}(dx) \right| = 0, \quad \forall T<\infty,
\end{equation}
and that
\begin{equation}\label{truncerror}
 \lim_{\e\ra 0} \limsup_{n\ra\infty} \Pv\left( \sup_{t\leq T} | \langle \phi_t , \pi_t^n \rangle - \langle \phi_t, \pi_t^{n,\e}\rangle | \geq \d \right) = 0, \quad \forall \d>0. 
\end{equation}

In the proofs of \eqref{tsystemconv}--\eqref{truncerror}, we will use that both $u$ and $\phi$ have compact support. Thus, for the remainder of this section we will fix constants $L,T<\infty$ such that $\supp u \subset [-L,L]$ and $\supp \phi \subset [-L,L] \times [0,T]$ (clearly the limits in \eqref{WeWconv} and \eqref{truncerror} need only be proved for this choice of $T$). 
Moreover, since $W$ has only countably infinitely many atoms we may choose the constant $L$ so that $W(\{-L,L\} \times (0,\infty)) = 0$ (that is, there are no traps located at $\pm L$ in $W$). 

\subsection{Convergence for the truncated systems}
In this subsection we will prove \eqref{tsystemconv}.
The proof of \eqref{tsystemconv} will follow from the following two lemmas. 
\begin{lem}\label{uWnuW}
For almost every $\e>0$, 
 \[
\lim_{n\ra\infty} \Ev\left[ \int_{\R_+} \langle \phi_t, \pi_t^{n,\e} \rangle \, dt \right] 
= \int_{\R_+} \int_\R \phi(t,x) u_{W^{(\e)}}(t,x) \s_{W^{(\e)}}(dx) \, dt. 
\]
\end{lem}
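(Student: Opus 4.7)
The plan is to use the explicit mean formula from Lemma \ref{lem:dual} together with Fubini to reduce the statement to a convergence of deterministic integrals, and then to exploit the fact that the truncated environments $W^{(\e)}$ have only finitely many atoms in any compact set. First, apply Lemma \ref{lem:dual} to the truncated system $\eta_t^{n,\e}$ in the environment $W_n^{(\e)}$, whose initial configuration is product Poisson with intensities $a_n u(x_k^n) y_k^n$ on atoms with $y_k^n \geq \e$. This gives $\Ev[\eta_t^{n,\e}(x_k^n)] = a_n u_{W_n^{(\e)}}(t,x_k^n) y_k^n$ for such atoms. Since $\phi$ is compactly supported and $u$ is bounded, Fubini yields
\[
\Ev\!\left[\int_{\R_+}\!\langle \phi_t,\pi_t^{n,\e}\rangle\,dt\right]=\int_0^T\!\!\int_\R \phi(t,x)\,u_{W_n^{(\e)}}(t,x)\,\s_{W_n^{(\e)}}(dx)\,dt,
\]
so the task reduces to showing this expression converges to its analogue with $W_n^{(\e)}$ replaced by $W^{(\e)}$.

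Next, I would restrict to $\e>0$ for which $W(\R\times\{\e\})=0$, which excludes only countably many values of $\e$. For such $\e$, Assumption \ref{asmtrapc} combined with standard vague-continuity of truncation at a level that carries no mass gives $W_n^{(\e)}\to W^{(\e)}$ vaguely on $\R\times(0,\infty]$. Because $W^{(\e)}$ has only finitely many atoms in the compact window $K_{L,\e}:=[-L,L]\times[\e,\infty]$, and because we chose $L$ so that $W(\{-L,L\}\times(0,\infty))=0$, for all sufficiently large $n$ the restriction $W_n^{(\e)}|_{K_{L,\e}}$ consists of exactly the same finite number of atoms as $W^{(\e)}|_{K_{L,\e}}$; relabelling, we may write them as $(x_k^n,y_k^n)\to(x_k,y_k)$.

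The main technical step is to show that $u_{W_n^{(\e)}}(t,x_k^n)\to u_{W^{(\e)}}(t,x_k)$ for each fixed $t>0$ and each relevant atom. Since both truncated environments have discrete trap sets, the processes $Z_{W_n^{(\e)}}^*(\cdot\,;x_k^n)$ and $Z_{W^{(\e)}}^*(\cdot\,;x_k)$ can be built on a common probability space using the same exponential family $\{\zeta_\ell\}_\ell$: on each trap the holding time is $y_\ell^n\zeta_\ell$ in the $n$-th environment and $y_\ell\zeta_\ell$ in the limit environment. Since there are only finitely many traps of $W^{(\e)}$ in any window of interest and all of their coordinates $(x_\ell^n,y_\ell^n)$ converge, the cumulative measures $\tau_{W_n^{(\e)}}((y,x_k^n])$ converge almost surely to $\tau_{W^{(\e)}}((y,x_k])$ for each $y$. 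With probability one the event $\tau_{W^{(\e)}}((y,x_k])=t$ never occurs (the distribution is atomless for each $y$), so $Z_{W_n^{(\e)}}^*(t;x_k^n)\to Z_{W^{(\e)}}^*(t;x_k)$ almost surely, and by continuity of $u$ and the Bounded Convergence Theorem, $u_{W_n^{(\e)}}(t,x_k^n)\to u_{W^{(\e)}}(t,x_k)$.

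Finally, the inner integral $\int_\R \phi(t,x)u_{W_n^{(\e)}}(t,x)\s_{W_n^{(\e)}}(dx)=\sum_k \phi(t,x_k^n)u_{W_n^{(\e)}}(t,x_k^n)y_k^n$ is a sum over a fixed finite index set (for $n$ large), with converging weights $y_k^n\to y_k$, converging locations $x_k^n\to x_k$, and converging values $u_{W_n^{(\e)}}(t,x_k^n)\to u_{W^{(\e)}}(t,x_k)$; this gives pointwise convergence in $t$. A uniform bound $\sum_k y_k^n\ind{|x_k^n|\leq L}\leq \s_{W^{(\e)}}([-L-1,L+1])+o(1)$ together with $\|u\|_\infty\|\phi\|_\infty$ furnishes a dominating constant on $[0,T]$, so a second application of the Bounded Convergence Theorem on $[0,T]$ gives the claim. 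The main obstacle is the third step, but truncation restricts attention to finitely many atoms in the relevant window, which turns it into a finite-dimensional continuity argument for the directed trap dynamics.
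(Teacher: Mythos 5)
Your proposal is correct, and it follows the same overall reduction as the paper: apply Lemma \ref{lem:dual} plus Fubini to identify the mean, restrict to $\e$ with $W(\R\times\{\e\})=0$, invoke vague convergence to get the finite-atom representation \eqref{WneWerep} (same $N_{\e,L}$ atoms for large $n$, with $(x_k^n,y_k^n)\to(x_k,y_k)$), and reduce to showing $u_{W_n^{(\e)}}(t,x_k^n)\to u_{W^{(\e)}}(t,x_k)$. Where you diverge from the paper is in that last step. The paper identifies the family $v_k^n(t)=u_{W_n^{(\e)}}(t,x_k^n)$ as the solution of a triangular finite system of linear ODEs, $\frac{\del}{\del t}v_k^n=\frac{v_{k-1}^n-v_k^n}{y_k^n}$ with $v_k^n(0)=u(x_k^n)$ (this is exactly what Proposition \ref{prop:duWdt} and the identity $u_W^\circ(t,x_k^n)=u_W(t,x_{k-1}^n)$ give for discrete trap sets), and then uses continuous dependence of ODE solutions on coefficients and initial data. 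That yields convergence uniformly in $t\leq T$ and makes the passage to the time integral immediate. You instead build the left-directed trap processes $Z^*_{W_n^{(\e)}}(\cdot;x_k^n)$ and $Z^*_{W^{(\e)}}(\cdot;x_k)$ on a common probability space using the same exponentials, observe that the finitely many jump epochs $T_j^n=\tau_{W_n^{(\e)}}((x_{k-j}^n,x_k^n])$ converge to $T_j$, that almost surely $t\neq T_j$ for all $j$ (the $T_j$ have continuous laws), and conclude a.s.\ convergence of positions, then apply bounded convergence twice (once in $\Ev$, once over $t\in[0,T]$). Your route is more elementary and probabilistic — it bypasses the ODE characterization entirely — at the cost of giving only pointwise-in-$t$ convergence, which you correctly supplement with a uniform dominating bound. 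One small imprecision to tidy: the a.s.\ convergence $\tau_{W_n^{(\e)}}((y,x_k^n])\to\tau_{W^{(\e)}}((y,x_k])$ should be stated for $y$ avoiding the finitely many trap locations and $x_k$ itself; as it happens, the only $y$'s that enter your argument are the trap locations themselves via the jump epochs $T_j$, and the relevant claim is that $t\neq T_j$ a.s., which you do state. Both proofs are sound.
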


\begin{lem}\label{varlem}
For almost every $\e>0$, 
\[
 \lim_{n\ra\infty} \Varv\left( \int_{\R_+} \langle \phi_t, \pi_t^{n,\e} \rangle \, dt  \right) = 0. 
\]
\end{lem}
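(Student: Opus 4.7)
The plan is to exploit the Poisson product structure of the initial configuration together with the independence of the particle trajectories, so that the spatial-temporal integral becomes a compound-Poisson sum of uniformly bounded random variables. Since every particle in the truncated environment $W_n^{(\e)}$ is always located at one of the trap sites $x_k^n$ with $y_k^n\geq\e$, one may rewrite
\[
 \int_{\R_+} \langle \phi_t, \pi_t^{n,\e} \rangle\, dt = \frac{1}{a_n} \sum_{\ell:\, y_\ell^n \geq \e} \sum_{j=1}^{\eta_0^{n,\e}(x_\ell^n)} Y_{\ell,j}^{n,\e}, \qquad Y_{\ell,j}^{n,\e} := \int_{\R_+} \phi\bigl(t, Z^{\ell,j}_{W_n^{(\e)}}(t)\bigr)\, dt,
\]
where $Z^{\ell,j}_{W_n^{(\e)}}$ is the $j$-th independent copy of the directed trap process started from $x_\ell^n$. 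For distinct pairs $(\ell,j)$ the $Y_{\ell,j}^{n,\e}$ are independent, for fixed $\ell$ they are i.i.d., and the time-support of $\phi$ gives the uniform bound $|Y_{\ell,j}^{n,\e}|\leq T\|\phi\|_\infty$.

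The next step is to apply the compound-Poisson identity $\Varv(\sum_{j=1}^N Y_j) = \lambda\,\Ev[Y_1^2]$ for $N\sim\text{Poisson}(\lambda)$ independent of an i.i.d.\ sequence $\{Y_j\}$. Since $\eta_0^{n,\e}(x_\ell^n)$ is Poisson with mean $a_n u(x_\ell^n) y_\ell^n$ and is independent across $\ell$ and independent of the trajectories, this identity together with independence across $\ell$ yields
\[
 \Varv\left( \int_{\R_+} \langle \phi_t, \pi_t^{n,\e} \rangle\, dt \right) = \frac{1}{a_n} \sum_{\ell:\, y_\ell^n \geq \e} u(x_\ell^n)\, y_\ell^n\, \Ev\bigl[(Y_{\ell,1}^{n,\e})^2\bigr] \leq \frac{T^2 \|\phi\|_\infty^2\, \|u\|_\infty}{a_n}\, \s_{W_n^{(\e)}}([-L,L]),
\]
where the inequality uses the uniform bound on $Y_{\ell,1}^{n,\e}$ and that $u$ vanishes outside $[-L,L]$.

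It remains to verify that $\s_{W_n^{(\e)}}([-L,L])$ is bounded as $n\to\infty$. Since $W$ has only countably many atoms, for almost every $\e > 0$ there are no atoms of $W$ on $\R\times\{\e\}$, and by the choice of $L$ there are no atoms on $\{\pm L\}\times(0,\infty]$; thus the "trap-mass on $[-L,L]\times[\e,\infty]$" functional $W\mapsto \s_{W}([-L,L])$ is continuous at $W$ under vague convergence (approximate $(x,y)\mapsto y\,\indd{(x,y)\in[-L,L]\times[\e,\infty]}$ from above and below by continuous compactly supported functions). Hence $\s_{W_n^{(\e)}}([-L,L]) \to \s_{W^{(\e)}}([-L,L]) < \infty$, the variance bound is $O(1/a_n)$, and the lemma follows since $a_n\to\infty$. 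The only mild technical obstacle is the "almost every $\e$" caveat, which is precisely what prevents the truncation boundary from carrying positive $W$-mass and thereby obstructing the use of vague convergence.
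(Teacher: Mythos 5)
Your proof is correct. It reaches the same bound, $\Varv\bigl(\int \langle \phi_t, \pi_t^{n,\e} \rangle \, dt\bigr) \leq T^2 \|\phi\|_\infty^2 \|u\|_\infty \, a_n^{-1}\, \s_{W_n^{(\e)}}([-L,L])$, but by a genuinely different route. The paper first invokes Lemma~\ref{lem:dual} to obtain that each time-slice $\{\eta_t^{n,\e}(x_k^n)\}_k$ is product Poisson, computes the per-$t$ variance from the Poisson identity $\Varv = \text{mean}$, and then uses a Cauchy--Schwarz bound on $\Covv\bigl(\langle\phi_s,\pi_s^{n,\e}\rangle, \langle\phi_t,\pi_t^{n,\e}\rangle\bigr)$ to control the variance of the time integral (paying the factor $T^2$). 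You instead rewrite the time-integrated functional directly as a sum of independent compound Poisson contributions, $\frac{1}{a_n}\sum_\ell\sum_{j\leq\eta_0^{n,\e}(x_\ell^n)} Y_{\ell,j}^{n,\e}$, and apply the elementary identity $\Varv(\sum_{j\leq N} Y_j) = \lambda\,\Ev[Y_1^2]$ with the uniform bound $|Y_{\ell,j}^{n,\e}| \leq T\|\phi\|_\infty$. This bypasses Lemma~\ref{lem:dual} entirely; the only input is the initial product Poisson structure and the mutual independence of trajectories. Your approach is self-contained for this lemma, though Lemma~\ref{lem:dual} is still indispensable in the companion Lemma~\ref{uWnuW} to compute the mean, so the paper's version is the more economical organization within the section as a whole. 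You also made explicit the vague-convergence argument showing $\s_{W_n^{(\e)}}([-L,L]) \to \s_{W^{(\e)}}([-L,L]) < \infty$, which the paper leaves implicit; this is where the ``almost every $\e$'' caveat and the choice $W(\{-L,L\}\times(0,\infty])=0$ enter, and your handling of it is right.
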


\begin{proof}[Proof of Lemma \ref{uWnuW}]
First of all, let $\e>0$ be such that  $W(\R \times \{\e\} ) = 0$ (since $W$ has countably many atoms, this is true for all but countably many $\e>0$). 
Therefore, by our above choice of $L$, the point process $W$ does not have any atoms on the boundary of $[-L,L]\times [\e,\infty]$ and so the vague convergence of $W_n$ to $W$ implies that for all $n$ large enough the point process $W_n$ has the same number of atoms in $[-L,L] \times [\e,\infty]$ as $W$ does and that the locations of these atoms converge to the locations of the respective atoms in $W$ (see \cite[Proposition 3.13]{rEVRVPP}). That is, letting $N_{\e,L} = W([-L,L]\times [\e,\infty])$ we can enumerate the atoms of $W_n$ and $W$ so that 
\begin{equation}\label{WneWerep}
 W_n^{(\e)} = \sum_{k=1}^{N_{\e,L}} \d_{(x_k^n,y_k^n)}, \quad  W^{(\e)} = \sum_{k=1}^{N_{\e,L}} \d_{(x_k,y_k)},
\quad \text{and}\quad \lim_{n\ra\infty} (x_k^n,y_k^n) = (x_k,y_k), \quad \forall k\leq N_{\e,L}. 
\end{equation}
Moreover, we can choose the enumeration so that the traps are ordered spatially. That is
\[
 1\leq k < \ell \leq N_{\e,L} \quad \Ra \quad x_k^n < x_\ell^n \text{ and } x_k < x_\ell. 
\]

Now, Lemma \ref{lem:dual} (see also Remark \ref{rem:dual}) implies that 
\begin{align*}
  \Ev\left[ \int_{\R_+} \langle \phi_t, \pi_t^{n,\e} \rangle \, dt \right] 
= \int_{\R_+} \Ev\left[ \langle \phi_t, \pi_t^{n,\e} \rangle \right] \, dt 
&= \int_{\R_+} \frac{1}{a_n} \sum_{k:\, y_k^n \geq \e} \phi(t,x_k^n) \Ev[ \eta_t^{n,\e}(x_k^n) ] \, dt \\
&= \int_0^T \sum_{k=1}^{N_{\e,L}} \phi(t,x_k^n) u_{W_n^{(\e)}}(t,x_k^n) y_k^n \, dt.
\end{align*}
Also, note that 
\[
 \int_{\R_+} \int_\R \phi(t,x) u_{W^{(\e)}}(t,x) \, \s_{W^{(\e)}}(dx) \, dt
= \int_0^T \sum_{k=1}^{N_{\e,L}} \phi(t,x_k) u_{W^{(\e)}}(t,x_k) y_k \, dt.
\]
Since $\phi$ is continuous, $x_k^n\ra x_k$, and $y_k^n \ra y_k$, we need only to show that 
\begin{equation}\label{uWnlim}
 \lim_{n\ra\infty} \sup_{t\leq T} \left|u_{W_n^{(\e)}}(t,x_k^n) - u_{W^{(\e)}}(t,x_k)\right| = 0, \quad\forall k\leq N_{\e,L}.  
\end{equation}
For convenience of notation, let $v_k^n(t) = u_{W_n^{(\e)}}(t,x_k^n)$ and $v_k(t) = u_{W^{(\e)}}(t,x_k)$. 
Then, it follows from Proposition \ref{prop:duWdt} that the families of functions $\{v_k^n\}_{k\leq N_{\e,L}}$ and  $\{v_k\}_{k\leq N_{\e,L}}$ are the solutions to the following systems of linear differential equations. 
\[
 \begin{cases}
  \frac{\del}{\del t} v_k^n(t) = \frac{v_{k-1}^n(t) - v_k^n(t)}{y_k^n} & k\leq N_{\e,L} \\
  v_k^n(0) = u(x_k^n) & k\leq N_{\e,L},
 \end{cases}
\quad\text{and}\quad
 \begin{cases}
  \frac{\del}{\del t} v_k(t) = \frac{v_{k-1}(t) - v_k(t)}{y_k} & k\leq N_{\e,L} \\
  v_k(0) = u(x_k) & k\leq N_{\e,L},
 \end{cases}
\]
where for convenience of notation we let $v_0^n(t) \equiv v_0(t) \equiv 0$. Note that here we used the fact that we have ordered the indices so that the traps are in increasing order and that the traps in the truncated environment are all separated so that $u_W^\circ(t,x_k^n) = u_W(t,x_{k-1}^n)$.  
Since $(x_k^n,y_k^n) \ra (x_k,y_k)$ and the function $u$ is continuous, the coefficients and initial conditions of the system
on the left converge to those for the system on the right. 
Thus, $v_k^n$ converges uniformly to $v_k$ as $n\ra\infty$ for each $k\leq N_{\e,L}$, which is exactly what was to be shown in \eqref{uWnlim}. 
\end{proof}

\begin{proof}[Proof of Lemma \ref{varlem}]
 It follows from Lemma \ref{lem:dual} that 
\begin{align*}
\Varv\left(  \langle \phi_t , \pi_t^{n,\e} \rangle \right)
&= \Varv\left( \frac{1}{a_n} \sum_{k:\, y_k^n \geq \e} \eta_t^{n,\e} (x_k^n) \phi(t,x_k^n) \right) \\
&= \frac{1}{a_n^2}  \sum_{k:\, y_k^n \geq \e} \Varv( \eta_t^{n,\e} (x_k^n) ) \phi(t,x_k^n)^2 \\
&= \frac{1}{a_n}  \sum_{k:\, y_k^n \geq \e} u_{W_n^{(\e)}}(t,x_k^n) y_k^n \phi(t,x_k^n)^2 
\leq \frac{\|u\|_\infty \|\phi\|_\infty^2}{a_n} \s_{W_n^{(\e)}}([-L,L]). 
\end{align*}
Note that the last term on the right does not depend on $t$ and vanishes as $n\ra\infty$. 
Since
\begin{align*}
 \Varv\left(  \int_{\R_+} \langle \phi_t , \pi_t^{n,\e} \rangle \, dt \right) 
&= \Varv\left(  \int_0^T \langle \phi_t , \pi_t^{n,\e} \rangle \, dt \right) \\
&= \int_0^T \int_0^T \Covv\left( \langle \phi_s , \pi_s^{n,\e} \rangle, \, \langle \phi_t , \pi_t^{n,\e} \rangle \right) \, ds \, dt \\
&\leq T^2 \left\{ \sup_{t\leq T} \Varv\left(  \langle \phi_t , \pi_t^{n,\e} \rangle \right) \right\},
\end{align*}
this completes the proof of the lemma. 
\end{proof}

\subsection{The error from truncating the limiting process}
Here we will prove \eqref{WeWconv}.
To this end, first note that 
\begin{align*}
&\left|  \int_\R \phi(t,x) u_{W^{(\e)}}(t,x) \s_{W^{(\e)}}(dx) - \int_\R \phi(t,x) u_{W}(t,x) \s_{W}(dx) \right|\\
&\quad= \left| \sum_k y_k \phi(t,x_k) \left\{ u_{W^{(\e)}}(t,x_k)  \ind{y_k \geq \e} - u_W(t,x_k) \right\} \right| \\
&\quad\leq \sum_k y_k |\phi(t,x_k)| \left|  u_{W^{(\e)}}(t,x_k)   - u_W(t,x_k) \right| \ind{y_k \geq \e} + \| u \|_\infty \sum_k y_k |\phi(t,x_k)| \ind{y_k < \e} \\
&\quad\leq \|\phi\|_\infty \sum_{k: |x_k|\leq L} y_k \left|  u_{W^{(\e)}}(t,x_k)   - u_W(t,x_k) \right| \ind{y_k \geq \e} + \| u \|_\infty \|\phi\|_\infty \sum_{k: |x_k|\leq L} y_k \ind{y_k < \e}
\end{align*}
Note that the last term on the right doesn't depend on $t$ and vanishes as $\e \ra 0$. Therefore, we need only to show that 
\[
 \lim_{\e \ra 0} \sup_{t} \sup_{k: |x_k|\leq L} \left|  u_{W^{(\e)}}(t,x_k)   - u_W(t,x_k) \right|  = 0. 
\]
To compare $u_W^{(\e)}(t,x_k)$ and $u_W(t,x_k)$, we will couple 
the left-directed trap processes $Z_{W^{(\e)}}^*(t;x_k)$ and $Z_W^*(t;x_k)$ by using the same exponential random variables $\zeta_k$ to generate the holding times at the traps with $y_k \geq \e$. 
Using this coupling and the fact that $\supp u \subset [-L,L]$ we have (recalling the notation $\Delta$ for the modulus of continuity from \eqref{Deltadef}) that 
\begin{align}
 \left| u\left(Z_{W^{(\e)}}^*(t;x_k)\right) - u\left( Z_W^*(t;x_k) \right) \right| 
&\leq \Delta\left( u ; \sup_{t\leq \tau_{W^{(\e)}}((-L,x_k])} \left| Z_{W^{(\e)}}^*(t;x_k) - Z_W^*(t;x_k) \right| \right) \nonumber \\
&\leq \Delta\left( u ; \sup_{t\leq \tau_{W^{(\e)}}((-L,L])} \left| Z_{W^{(\e)}}^*(t;L) - Z_W^*(t;L) \right| \right). \label{uWuWebound}
\end{align}
Now, let
\[
 \tau_W^{(L,\e)} = \tau_W((-L,L]) - \tau_{W^{(\e)}}((-L,L]) = \sum_{k \in \Z} y_k \zeta_k \ind{x_k \in (-L,L], \, y_k < \e}, 
\]
so that $\tau_W^{(L,\e)}$ is how much less time it takes the process $Z_{W^{(\e)}}^*$ to cross from $L$ to $-L$ than it takes the process $Z_W^*$ to do so. 
With the above coupling of $Z_W^*$ and $Z_{W^{(\e)}}^*$ it is then clear that 
\[
 Z_W^*(t + \tau_W^{(L,\e)}; L) \leq Z_{W^{(\e)}}^*(t;L) \leq Z_W^*(t;L), \quad \forall t \leq \tau_{W^{(\e)}}((-L,L]). 
\]
In particular, this implies that the supremum in \eqref{uWuWebound} can be bounded by the maximum distance the process $Z_W^*(\cdot\,;L)$ travels during a time interval of length $\tau_W^{(L,\e)}$ before reaching $-L$. 
That is, using the notation 
\[
 Z_W^{(*,L)}(\cdot) = \max \left\{ Z_W^*(\cdot\, ;L), -L \right\},
\]
we have that 
\begin{equation}\label{uWuWebound2}
 \left| u\left(Z_{W^{(\e)}}^*(t;x_k)\right) - u\left( Z_W^*(t;x_k) \right) \right| 
\leq \Delta\left(u;  \Delta\left(Z_W^{(*,L)}; \tau_W^{(L,\e)} \right)  \right). 
\end{equation}
Note that this bound is uniform over $t$ and $|x_k|\leq L$. 
Therefore, it follows from \eqref{uWuWebound2} and the definition of the function $u_W$ that 
\begin{equation}\label{uWuWebound3}
 \sup_{t} \sup_{k: |x_k|\leq L} \left|  u_{W^{(\e)}}(t,x_k)   - u_W(t,x_k) \right| 
\leq \Ev\left[  \Delta\left(u;  \Delta\left(Z_W^{(*,L)}; \tau_W^{(L,\e)} \right)  \right) \right]. 
\end{equation}
Since $\tau_W^{(L,\e)} \ra 0$ almost surely and since the process $Z_W^{(*,L)}$ is almost surely continuous (here we are using that $W \in \mathcal{T}'$), decreasing, and bounded below we can conclude that
\[
 \lim_{\e\ra 0} \Delta\left(Z_W^{(*,L)}; \tau_W^{(L,\e)} \right) = 0, \quad \Pv\text{-a.s.}
\]
Then, since $\Delta(u;\d)$ is bounded and vanishes as $\d\ra 0$, we can conclude by the bounded convergence theorem that the right side of \eqref{uWuWebound3} vanishes as $\e \ra 0$. 
This finishes the proof of \eqref{WeWconv}.

\subsection{The error from the truncated systems}
Here we will prove \eqref{truncerror}.
First of all, we describe how we will couple the system $\eta_t^n$ and $\eta_t^{n,\e}$. 
Clearly we can couple the initial conditions by letting 
\[
 \eta_0^{n,\e}(x_k^n) = \begin{cases} \eta_0^n(x_k^n) & \text{if } y_k^n \geq \e \\ 0 & \text{if } y_k^n < \e. \end{cases}
\]
We will prove \eqref{truncerror} by showing that the directed trap processes $Z_{W_n}^{k,j}$ and $Z_{W_n^{(\e)}}^{k,j}$ started at the locations with $y_k\geq \e$ can be coupled so that the differences are typically small, and then by showing that the number of particles in $\eta_0^n$ that start at a trap with $y_k^n < \e$ do not contribute much to $\langle \phi_t, \pi_t^n\rangle$ if $\e>0$ is sufficiently small. 
As was done for the coupling of $Z_W^*$ with $Z_{W^{(\e)}}^*$ in the previous section, we can couple $Z_{W_n}^{k,j}$ and $Z_{W_n^{(\e)}}^{k,j}$ by using the same exponential random variables $\zeta_k$ to generate the waiting times at the traps with $y_k^n \geq \e$. 
With this coupling, then similarly to the proof of \eqref{uWuWebound3} in the previous section we can show that
\[
\max_{k: \, y_k^n \geq \e, \, |x_k^n|\leq L} \Ev\left[ \sup_t \left| \phi\left(t,Z_{W_n}^{k,j}(t)\right) - \phi\left(t,Z_{W_n^{(\e)}}^{k,j}(t)\right) \right| \right]
\leq \Ev\left[  \Delta\left(\phi;  \Delta\left(Z_{W_n}^{(L)}; \tau_{W_n}^{(L,\e)} \right)  \right) \right],
\]
where we use the notation 
\[
 Z_{W_n}^{(L)}(\cdot) = \min\{ Z_{W_n}(\cdot\,;-L),\, L\} \quad \text{and}\quad  \tau_{W_n}^{(L,\e)} = \tau_{W_n} ([-L,L)) - \tau_{W_n^{(\e)}}([-L,L)).
\]
Therefore, with this coupling of the two systems we have that 
\begin{align}
&\Pv\left( \sup_t | \langle \phi_t, \pi_t^n \rangle - \langle \phi_t, \pi_t^{n,\e}\rangle | \geq \d \right) \nonumber \\
&\quad \leq \Pv\left( \frac{1}{a_n} \sum_{k: \, y_k^n<\e} \eta_0^n(x_k^n) \geq \frac{\d}{2\|\phi\|_\infty} \right) \nonumber \\
&\quad \qquad + \Pv\left( \sum_{k: \, y_k^n \geq \e} \sum_{j=1}^{\eta_0^n(x_k^n)}  \sup_t \left| \phi\left(t,Z_{W_n}^{k,j}(t)\right) - \phi\left(t,Z_{W_n^{(\e)}}^{k,j}(t)\right) \right| \geq \frac{\d a_n}{2}  \right) \nonumber \\
&\quad \leq  \frac{2 \|\phi\|_\infty}{\d} \sum_{k: \, y_k^n < \e} u(x_k^n) y_k^n
+ \frac{2}{\d a_n} \sum_{k: \, y_k^n \geq \e} \Ev\left[\eta_0^n(x_k^n) \right] \Ev\left[  \Delta\left(\phi;  \Delta\left(Z_{W_n}^{(L)}; \tau_{W_n}^{(L,\e)} \right)  \right) \right] \nonumber \\
&\quad \leq  \frac{2 \|\phi\|_\infty}{\d} \sum_{k: \, y_k^n < \e} u(x_k^n) y_k^n
+ \frac{2 }{\d} \Ev\left[  \Delta\left(\phi;  \Delta\left(Z_{W_n}^{(L)}; \tau_{W_n}^{(L,\e)} \right)  \right) \right] \sum_{k: \, y_k^n \geq \e} u(x_k^n) y_k^n \label{truncerror2}
\end{align}
Due to Assumption \ref{asmtraps} and the fact that $u$ is bounded and has compact support, the first term on the right will vanish as $n\ra\infty $ and then $\e \ra 0$. For the second term on the right, note that Assumptions \ref{asmtrapc} and \ref{asmtraps} imply that 
\[
 \lim_{\e \ra 0} \limsup_{n\ra\infty} \sum_{k: \, y_k^n \geq \e} u(x_k^n) y_k^n = \int_\R u(x) \s_W(dx) < \infty. 
\]
Therefore, the final sum on the right in \eqref{truncerror2} is uniformly bounded in $\e$ and $n$, and so to finish the proof of \eqref{truncerror} we need only to show that 
\[
 \lim_{\e\ra 0} \limsup_{n\ra 0} \Ev\left[  \Delta\left(\phi;  \Delta\left(Z_{W_n}^{(L)}; \tau_{W_n}^{(L,\e)} \right)  \right) \right] = 0.
\]
To this end, since $\d\mapsto \Delta(\phi;\d)$ is uniformly bounded and vanishes as $\d \ra 0$, it is enough to show that
\[
 \lim_{\e\ra 0} \limsup_{n\ra\infty} \Pv\left( \Delta\left(Z_{W_n}^{(L)}; \tau_{W_n}^{(L,\e)}\right) > \d' \right) = 0, \quad \forall \d'>0. 
\]
To prove this, note that for any $\e' > 0$, 
\begin{equation}\label{truncerror3}
\Pv\left( \Delta\left(Z_{W_n}^{(L)}; \tau_{W_n}^{(L,\e)}\right) > \d' \right) 
\leq \Pv\left(  \tau_{W_n}^{(L,\e)} \geq \e' \right) + \Pv\left( \Delta(Z_{W_n}^{(L)}; \e' ) > \d' \right)
\end{equation}
Since 
$\Ev[ \tau_{W_n}^{(L,\e)} ] = \sum_k y_k^n \ind{x_k^n \in [-L,L), \, y_k^n < \e}$, 
it follows from Assumption \ref{asmtraps} that the first term on the right in \eqref{truncerror3} vanishes if we first let $n\ra\infty$ and then $\e\ra 0$ for any fixed $\e'>0$. 
For the last term on the right in \eqref{truncerror3}
we will need the following lemma whose proof we postpone for the moment. 
\begin{lem}\label{ZWnconv}
 If Assumptions \ref{asmtrapc} and \ref{asmtraps} hold, then $Z_{W_n}^{(L)}(\cdot) = \min\{ Z_{W_n}(\cdot\, ;-L), \, L\}$ converges in distribution to $Z_W^{(L)}(\cdot) = \min\{ Z_{W}(\cdot\,;-L), \, L\}$ 
in the space $D^U_{\R_+}$
\end{lem}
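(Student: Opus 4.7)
The plan is to reduce everything to a truncation argument in the parameter $\e$ controlling trap size. The key observation is that $W \in \mathcal{T}'$ forces $Z_W(\cdot; -L)$, and hence $Z_W^{(L)}$, to be continuous in $t$. Therefore it suffices to establish convergence in distribution in $D_{\R_+}^J$, since convergence to a continuous limit in $J_1$ automatically upgrades to the uniform topology. I would construct all processes on a common probability space using a single i.i.d.\ family $\{\zeta_k\}$ of Exp(1) holding-time multipliers, sharing $\zeta_{k_j}$ between matched atoms across environments so that pathwise comparisons are possible.

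The construction proceeds in three coordinated steps. First, fix $\e > 0$ with $W(\R \times \{\e\}) = 0$ (true for all but countably many $\e$); by vague convergence of $W_n$ to $W$ and the fact that $W(\{-L, L\} \times (0,\infty]) = 0$, for all sufficiently large $n$ the truncated point processes $W_n^{(\e)}$ and $W^{(\e)}$ have the same finite number $N_{\e,L}$ of atoms in $[-L, L]$, and one can match these atoms so that $(x_{k_j}^n, y_{k_j}^n) \to (x_{k_j}, y_{k_j})$ for $j = 1, \ldots, N_{\e, L}$. Second, under the shared exponential coupling, $Z_{W_n^{(\e)}}^{(L)}$ is a step function with jump times $T_j^{(n,\e)} = \sum_{i \leq j} y_{k_i}^n \zeta_{k_i}$ and jump locations $x_{k_j}^n$ (the last jump being truncated to $L$). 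Since jump times and locations converge almost surely to those of $Z_{W^{(\e)}}^{(L)}$, one gets a.s.\ convergence in $D_{\R_+}^J$. Third, define $G_{n,\e}$ to be the largest gap between consecutive points of $\{-L, L\} \cup \{x_{k_j}^n\}$ and $G_\e$ the analogous quantity for $W^{(\e)}$. Then $G_{n,\e} \to G_\e$ by atom convergence, and since $\mathcal{J}_W$ is dense in $\R$ a covering argument (for each $\d > 0$, finitely many $\d$-intervals cover $[-L, L]$, each contains some atom of $W$, so all of these atoms have size at least some $\e_0(\d) > 0$) gives $G_\e \to 0$ as $\e \to 0$.

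The key pathwise bound is
\[
\sup_{t \geq 0} \bigl| Z_{W_n^{(\e)}}^{(L)}(t) - Z_{W_n}^{(L)}(t) \bigr| \leq G_{n,\e},
\]
and this interlacing estimate is where I expect the main technical care to be needed. The argument is: under the coupling $Z_{W_n^{(\e)}}$ is always ahead of $Z_{W_n}$, and if $Z_{W_n^{(\e)}}(t) = x_{k_j}^n$, then $Z_{W_n}$ must already have passed $x_{k_{j-1}}^n$ (else $Z_{W_n^{(\e)}}$ would not yet have left $x_{k_{j-1}}^n$, since their shared $\zeta_{k_{j-1}}$ controls both holding times); hence at every $t$ the two positions lie in a common consecutive-large-trap interval. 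The same bound with $G_\e$ in place of $G_{n,\e}$ controls $Z_{W^{(\e)}}^{(L)}$ versus $Z_W^{(L)}$. Combining the three pieces via the triangle inequality gives $Z_{W_n}^{(L)} \to Z_W^{(L)}$ in probability in the uniform metric on $D_{\R_+}^U$, which implies the desired convergence in distribution.
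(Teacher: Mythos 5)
Your reduction to the $J_1$-topology and the upgrade to uniform convergence when the limit is continuous is correct, and so is the a.s.\ convergence of the truncated step processes once atoms are matched. The problem is the claimed pathwise bound
\[
\sup_{t\geq 0}\bigl|Z^{(L)}_{W_n^{(\e)}}(t)-Z^{(L)}_{W_n}(t)\bigr|\leq G_{n,\e}.
\]
The justification (``else $Z_{W_n^{(\e)}}$ would not yet have left $x_{k_{j-1}}^n$, since their shared $\zeta_{k_{j-1}}$ controls both holding times'') conflates equal holding times with equal departure times. Under the shared-$\zeta$ coupling $Z_{W_n^{(\e)}}$ \emph{arrives} at each large trap $x_{k_{j-1}}^n$ strictly earlier than $Z_{W_n}$, because $Z_{W_n}$ accumulates additional waiting at every small trap it crosses; since the holding time $y_{k_{j-1}}^n\zeta_{k_{j-1}}$ is the same for both, $Z_{W_n^{(\e)}}$ also departs earlier. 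These arrival-time gaps compound: after $Z_{W_n^{(\e)}}$ has visited $j$ large traps it has saved the entire cumulative small-trap waiting time up to $x_{k_j}^n$, and nothing ties $Z_{W_n}(t)$ to the immediately preceding large trap. Concretely, if a single small trap near $-L$ happens to carry a huge realized holding time $y\zeta$, then $Z_{W_n}$ is parked there while $Z_{W_n^{(\e)}}$ races across $[-L,L]$ entirely, so the discrepancy approaches $2L$ rather than $G_{n,\e}$. The quantity that \emph{is} deterministically small on the event of interest is the cumulative small-trap waiting time $\tau_{W_n}^{(L,\e)}=\tau_{W_n}([-L,L))-\tau_{W_n^{(\e)}}([-L,L))$; the spatial discrepancy is then controlled by the modulus of continuity of $Z_{W_n^{(\e)}}^{(L)}$ over a time window of length $\tau_{W_n}^{(L,\e)}$, which is a random and not a deterministic quantity.

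This is exactly why the paper instead proves convergence of the hitting-time process $\tau_{W_n}([-L,\cdot))$ in $D_{[-L,L]}^J$ and then applies the inverse map $\mathfrak{I}$, which by \cite[Corollary 13.6.4]{wSPL} is continuous at strictly increasing paths (guaranteed here by $W\in\mathcal{T}'$). Working with $\tau$ rather than $Z$ is the crucial move: under the shared-$\zeta$ coupling the sup over $x\in[-L,L]$ of $|\tau_{W_n^{(\e)}}([-L,x))-\tau_{W_n}([-L,x))|$ is achieved at $x=L$ and equals $\tau_{W_n}^{(L,\e)}$, a single nonnegative random variable with mean $\sum_k y_k^n\ind{x_k^n\in[-L,L),\,y_k^n<\e}$, so Markov's inequality and Assumption~\ref{asmtraps} finish the truncation step without any gap estimate at all. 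To repair your argument while staying at the level of $Z$, you would need to replace $G_{n,\e}$ by $\Delta(Z_{W_n^{(\e)}}^{(L)};\tau_{W_n}^{(L,\e)})$ and then control the modulus stochastically by first bounding $\tau_{W_n}^{(L,\e)}$ and then using tightness of the moduli of continuity of $Z_{W_n^{(\e)}}^{(L)}$; this is essentially a reshuffling of the paper's argument and loses the pathwise simplicity you were aiming for.
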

Since $x \mapsto \Delta(x,\e')$ is a continuous mapping from $D_{\R_+}^U$ to $\R$, it follows from Lemma \ref{ZWnconv} that 
\[
 \lim_{n\ra\infty} \Pv\left( \Delta(Z_{W_n}^{(L)}; \e' ) > \d' \right) =  \Pv\left( \Delta( Z_W^{(L)}; \e') > \d' \right), 
\]
and since $Z_W^{(L)}(\cdot)$ is almost surely continuous, the right side can thus be made arbitrarily small by taking $\e' \ra 0$.
This completes the proof of \eqref{truncerror}, pending the proof of Lemma \ref{ZWnconv}. 

\begin{proof}[Proof of Lemma \ref{ZWnconv}]
We claim that it is enough to show that 
\begin{equation}\label{tauWnconv}
 \tau_{W_n}([-L,\cdot)) \Longrightarrow \tau_W([-L,\cdot)) \quad\text{on the space } D_{[-L,L]}^J.
\end{equation}
To see this, recall that the process $Z_{W_n}(\cdot;-L)$ was constructed by a time/space inversion of the process $\tau_{W_n}([-L,\cdot))$, and note that this time/space inversion functional is continuous at paths that are strictly increasing (see \cite[Corollary 13.6.4]{wSPL}). Since the limiting trap environment $W \in \mathcal{T}'$, the process $x\mapsto \tau_W([-L,x))$ is strictly increasing and then it follows from \cite[Corollary 13.6.4]{wSPL} and \eqref{tauWnconv} that $Z_{W_n}^{(L)}(\cdot)$ converges in distribution to $Z_W^{(L)}(\cdot)$.
Since the limiting process $Z_W^{(L)}(\cdot)$ is continuous, this convergence can be taken with respect to the uniform topology. 

To prove \eqref{tauWnconv}, first recall that for any $\e>0$ such that $W(\R \times \{\e\}) = 0$ (which is true for all but countably many $\e>0$) we can represent the truncated trap environments $W_n^{(\e)}$ and $W^{(\e)}$ as in \eqref{WneWerep}. 
From this representation it is easy to see that for any such $\e>0$ the process $\tau_{W_n^{(\e)}}([-L,\cdot))$ converges in distribution to the process $\tau_{W^{(\e)}}([-L,\cdot))$ in the space $D_{[-L,L]}^{J}$. 
Since this is true for arbitrarily small $\e>0$, \eqref{tauWnconv} will follow if we can show that 
\begin{equation}\label{tauWeerror}
  \lim_{\e \ra 0} \sup_{x\in [-L,L]} \left|\tau_{W^{(\e)}}([-L,x)) - \tau_{W}([-L,x)) \right| = 0, \quad \text{in $\Pv$-probability}, 
\end{equation}
and 
\begin{equation}\label{tauWneerror}
\lim_{\e\ra 0} \limsup_{n\ra\infty} \Pv\left( \sup_{x\in [-L,L]} \left|\tau_{W_n^{(\e)}}([-L,x)) - \tau_{W_n}([-L,x)) \right| \geq \d \right) = 0, \quad \forall \d>0. 
\end{equation}
In both \eqref{tauWeerror} and \eqref{tauWneerror} we will couple $\tau_{W^{(\e)}}$ with $\tau_W$ and $\tau_{W_n^{(\e)}}$ with $\tau_{W_n}$ by using the same exponential random variables $\zeta_k$ at traps that are in both environments. In this way the supremum over $x \in [-L,L]$ in both \eqref{tauWeerror} and \eqref{tauWneerror} is clearly achieved at $x=L$. From this coupling, it is clear that $\tau_{W^{(\e)}}([-L,L))$ increases to $\tau_W([-L,L))$ as $\e \ra 0$, $\Pv$-a.s., and thus \eqref{tauWeerror} holds. 
To see that \eqref{tauWneerror} holds, note that 
\[
  \Pv\left( \left|\tau_{W_n^{(\e)}}([-L,L)) - \tau_{W_n}([-L,L)) \right| \geq \d \right)
\leq \frac{1}{\d} \sum_{k} y_k \ind{x_k^n \in [-L,L), \, y_k^n < \e}.
\]
Since Assumption \ref{asmtraps} implies that this last sum vanishes as first $n\ra\infty$ and then $\e \ra 0$, this implies \eqref{tauWneerror}.  
\end{proof}

\section{The trap structure in RWRE}\label{sec:trapst}

In this section we review how a trapping structure can be identified in the environment $\w$ for a RWRE. 
This trapping structure will then later be used to construct a system of independent particles in a directed trap environment that can be effectively coupled with the system of independent RWRE. 
Several slightly different approaches have been used recently to identify trapping structures within RWRE \cite{dgWQL,estzWQL}, but we will follow for the most part the approach and terminology developed in \cite{pzSL1,p1LSL2,psWQLTn,psWQLXn} by the second author of the present paper.  

We begin the identification of the trap structure of the environment by recalling the notion of the \emph{potential} of the environment, first introduced by Sinai in \cite{sRRWRE}. 
For any fixed environment $\w$, we can define the potential $V_\w:\Z \ra \R$ of the environment by 
\begin{align*}
 V_\w(x) = 
\begin{cases}
 \sum_{i=0}^{x-1} \log \rho_i &  \text{if } x \geq 1 \\
 0 &  \text{if } x = 0\\
-\sum_{i=x}^{-1} \log \rho_i &  \text{if } x \leq -1. 
\end{cases}
\end{align*}
Next, we will define a doubly-infinite sequence $\{\nu_k\}_{k \in \Z}$ that we will refer to as the \emph{ladder locations} of the environment. 
These will be defined by 
\begin{align}
 \nu_0 &= \sup \{x \leq 0 : \, V_\w(y) > V_\w(x), \, \forall y < x \}, && \nonumber \\
\nu_{k} &= \sup \{x < \nu_{k+1}: \, V_\w(y) > V_\w(x), \, \forall y < x \}, &&  \forall k \leq -1 \label{nukdef}\\
\nu_k &= \inf \{ x > \nu_{k-1}: V_\w(x) < V_\w(\nu_{k-1}) \}, && \forall k \geq 1. \nonumber 
\end{align}

The ladder locations of the environment serve to identify ``traps'' for the RWRE in the following manner. Since $E_P[\log \rho_0] < 0$ by Assumption \ref{asmt}, the potential $V_\w$ of the environment is generally decreasing. However, there may be atypical long intervals of the environment where the potential is instead increasing. Since a decreasing (or increasing) potential indicates a local drift of the random walk to the right (or left), the atypical long intervals where the potential is increasing act as barriers or traps that the random walk must overcome. The ladder locations $\nu_k$ are constructed so that any interval where the potential is increasing must lie between two consecutive ladder locations $\nu_k$ and $\nu_{k+1}$. 

In order to prove the effective coupling of the RWRE with the related directed trap process, it is most convenient to make a slight change in the law on environments that we are using. 
Let
\begin{equation}\label{blockdef}
 B_k = (\w_{\nu_k}, \w_{\nu_k+1}, \ldots, \w_{\nu_{k+1}-1}), \qquad k \in \Z, 
\end{equation}
be the ``block'' of the environment on the interval $[\nu_k,\nu_{k+1})$. 
It follows from the definition of the ladder locations and Assumption \ref{asmiid} that $\{B_k\}_{k\in\Z}$ is an independent sequence and that for any $k\neq 0$ the block $B_k$ has the same distribution as 
\begin{equation}\label{blockdist}
 \tilde{B}_0 = (\w_0,\w_1,\ldots,\w_{\tilde{\nu}-1}),\quad \text{where}\quad \tilde{\nu} = \inf\{x>0: V_\w(x) < V_\w(0) \}.
\end{equation}
However, under the i.i.d.\ measure $P$ on environments the block $B_0$ containing the origin has a different distribution than all the other blocks (this is an instance of the ``inspection paradox''). 
We can make all the blocks between ladder locations have the same distribution by changing the distribution $P$ on environments to the distribution $Q$ on environments defined by 
\[
 Q( \w \in \cdot ) = P( \w \in \cdot \, | \, V_\w(y) > 0, \, \forall y < 0 ) = P(\w \in \cdot \, | \, \nu_0 = 0 ). 
\]
The measure $Q$ is stationary under shifts of the environment by the ladder locations in the following sense: 
if $\theta$ is the natural left shift operator on environments so that 
$(\theta^y \w)_x = \w_{y+x}$ for any $y,x \in \Z$, then
$\w$ and $\theta^{\nu_k}\w$ have the same distribution under $Q$ for any $k$. 
The environment $\w$ is no longer i.i.d.\ under the measure $Q$, but instead the blocks $B_k$ are i.i.d., all with the same distribution as $\tilde{B}_0$ as defined in \eqref{blockdist} (note that the distribution of $\tilde{B}_0$ is the same under the measures $P$ and $Q$ since the conditioning in the definiton of $Q$ only changes the environment to the left of the origin). 

\begin{rem}
 The definition of the ladder locations given in \eqref{nukdef} is slightly different from the one used in \cite{pzSL1,p1LSL2,psWQLTn,psWQLXn}. However, the distribution $Q$ is the same as in those previous papers and under the measure $Q$ the ladder locations are the same in this paper as in the previous papers. 
\end{rem}

\subsection{Coupling RWRE and directed traps}

The coupling of the RWRE with a directed trap process is obtained through the hitting times of the processes. For a RWRE $X_n$ let the hitting times be defined by $T_x = \inf\{n\geq 0: X_n = x\}$ for any $x \in \Z$. 
Since we expect the intervals between ladder locations to serve as traps for the RWRE, to each such interval $[\nu_k,\nu_{k+1})$ we identify the (quenched) expected crossing time as
\begin{equation}\label{bkdef}
 \b_k = \b_k(\w) = E_\w^{\nu_k}[ T_{\nu_{k+1}} ], \quad \forall k \in \Z. 
\end{equation}
Given an environment $\w$, we can thus use the parameters $\{\b_k(\w) \}_{k\in\Z}$ and the ladder locations $\{\nu_k \}_{k\in \Z}$ to define a trap environment. 
That is, we will define the point process $\mathfrak{B} = \mathfrak{B}(\w)$ by 
\begin{equation}\label{Btrapenv}
 \mathfrak{B} = \sum_{k \in \Z} \d_{(\nu_k, \b_k)}. 
\end{equation}
Techniques were developed in \cite{psWQLTn,psWQLXn} for coupling a random walk in the environment $\w$ with a directed trap process in the trap environment $\mathfrak{B}$ (we will explain this more fully below). 
In this way, understanding the probabilistic structure of the trap environment $\mathfrak{B}$ is useful for analyzing the behavior of the RWRE. 
To this end, the following tail estimates on the trap structure which were proved in \cite[Theorem 1.4 and Lemma 2.2]{pzSL1} are useful. 
\begin{equation}\label{bnutails}
 Q(\b_1 > x) \sim C_1 x^{-\k}, \quad \text{and} \quad Q( \nu_1 > x ) \leq C_2 e^{-C_3 x},
\end{equation}
for some constants $C_1, C_2, C_3>0$.
It follows from the tail estimate on $\nu_1$ that $\bar\nu:= E_Q[\nu_1] < \infty$. 
Moreover, since the sequence $\{\nu_{k+1}-\nu_k\}_{k\in\Z}$ is i.i.d.\ under the measure $Q$, we can conclude that 
\begin{equation}\label{unifnuk}
 \lim_{n\ra\infty} \frac{\sup_{|k| \leq n} \left| \nu_k - k \bar\nu\right|}{n} = 0, \quad \text{ in $Q$-probability.}
\end{equation}

The sequence $\{\b_k\}_{k\in\Z}$ is stationary and ergodic under $Q$ but not independent. 
Nonetheless, the sequence is close enough to independent that the following result on the limiting structure of the trap environment was obtained in 
\cite{psWQLXn}.

\begin{lem}[Lemma 5.1 in \cite{psWQLXn}]\label{lem:barWnconv}
Let $\overline{W}_n= \overline{W}_n(\w)$ be the point process on $\R \times (0,\infty]$ given  by 
\[
 \overline{W}_n  = \sum_k \d_{(\frac{k \bar\nu}{n}, \frac{\b_k}{n^{1/\k}})}. 
\]
There exists a constant $\l>0$ such that under the measure $Q$ on environments, $\overline{W}_n$ converges in distribution to a Poisson point process with intensity measure $\l y^{-\k-1} \, dx \, dy$. 
\end{lem}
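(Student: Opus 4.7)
\textbf{The plan} is to prove the Poisson limit via the classical criterion for convergence in $\mathcal{M}_p(\R \times (0,\infty])$ (Kallenberg's theorem, see \cite{rEVRVPP}): it suffices to check that for every finite union of continuity rectangles $A = \bigcup_{i=1}^m [a_i,b_i] \times [y_i,\infty]$ with $y_i > 0$ one has (a) $E_Q[\overline{W}_n(A)] \to \mu(A)$ and (b) $Q(\overline{W}_n(A) = 0) \to e^{-\mu(A)}$, where $\mu(dx\,dy) = \l y^{-\k-1} dx\, dy$. The two inputs that will drive the argument are the polynomial tail $Q(\b_1 > x) \sim C_1 x^{-\k}$ from \eqref{bnutails} and the fact that the sequence $\{\b_k\}_{k\in\Z}$ is stationary and only weakly dependent under $Q$.

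\emph{Step 1 (Intensity).} Stationarity of $\{\b_k\}$ under $Q$ and \eqref{bnutails} give, for a single rectangle,
\[
 E_Q\bigl[\overline{W}_n([a,b]\times[y,\infty])\bigr] = \#\{k\in\Z : k\bar\nu \in [an,bn]\}\cdot Q\bigl(\b_1 > yn^{1/\k}\bigr) \;\longrightarrow\; \tfrac{C_1}{\bar\nu}(b-a)y^{-\k}.
\]
This matches $\mu([a,b]\times[y,\infty]) = \tfrac{\l}{\k}(b-a)y^{-\k}$ precisely when $\l := \k C_1/\bar\nu$, and the extension to finite unions of rectangles is immediate by additivity.

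\emph{Step 2 (Void probability by truncation).} The essential difficulty is that $\b_k = E_\w^{\nu_k}[T_{\nu_{k+1}}]$ depends on the environment on the entire half-line to the left of $\nu_{k+1}$, so the $\b_k$ are not independent. I would introduce a truncated version $\b_k^{(L)}$ that depends only on the blocks $B_{k-L},\ldots,B_k$, for instance by replacing the quenched expected hitting time with the one obtained after reflecting the walker at $\nu_{k-L}$. Then $\{\b_k^{(L)}\}$ is $(L{+}1)$-dependent under $Q$ and, by the same analysis that produced \eqref{bnutails}, satisfies the same tail asymptotic up to a constant that tends to $C_1$ as $L\to\infty$. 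The crucial estimate is
\[
 \lim_{L\to\infty} \limsup_{n\to\infty} n\cdot Q\bigl(\b_1 > yn^{1/\k},\; \b_1^{(L)} \leq \tfrac{y}{2}n^{1/\k}\bigr) = 0,
\]
which allows me to replace $\overline{W}_n$ by $\overline{W}_n^{(L)} := \sum_k \d_{(k\bar\nu/n,\,\b_k^{(L)}/n^{1/\k})}$ with an error on $\{\overline{W}_n(A)=0\}$ that vanishes first in $n$ and then in $L$. For the $(L{+}1)$-dependent process $\overline{W}_n^{(L)}$ the Step~1 intensity computation still applies, and the void probability converges to $e^{-\mu(A)}$ by a standard Bernoulli-to-Poisson approximation for $L$-dependent stationary arrays of rare events (e.g.\ a Chen--Stein bound, or Grigelionis's classical result after grouping indices into super-blocks of length $\gg L$ that are exactly independent).

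\emph{Main obstacle.} The technical heart of the argument is the truncation estimate in Step 2. It amounts to showing that the excursion structure responsible for producing an atypically large value of $\b_k$ is essentially localized within $O(1)$ ladder blocks, so that truncating at distance $L$ loses only an $o(1/n)$ probability of significantly underestimating $\b_k$. Quantitatively one must control the quenched probability that an excursion from $\nu_k$ reaches $\nu_{k-L}$ and, conditionally on doing so, bound its contribution to $\b_k$; the exponential tail of $\nu_1$ in \eqref{bnutails} together with the finer potential and hitting-time estimates of \cite{pzSL1,p1LSL2} should provide the required control. This is essentially the content of Lemma 5.1 in \cite{psWQLXn} that is being invoked here.
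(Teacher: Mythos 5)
This statement is not proved in the paper: it is imported verbatim as ``[Lemma 5.1 in \cite{psWQLXn}]'' and used as a black box. Appendix B derives Corollaries \ref{cor:Wnweakconv}--\ref{cor:tBstable} from it, but the lemma itself is never re-derived. So there is no internal proof against which to compare your argument. What follows is therefore an assessment of your sketch on its own terms.

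Your outline is a reasonable route to this kind of Poisson limit. Kallenberg's two-point criterion (mean measure on rectangles plus avoidance functional) is the right tool, and your identification $\l = \k C_1 / \bar\nu$ is consistent with the tail asymptotic \eqref{bnutails} and $\bar\nu = E_Q[\nu_1]$. You also correctly isolate the genuine difficulty: $\b_k = E_\w^{\nu_k}[T_{\nu_{k+1}}]$ sees the whole left half-line, so $\{\b_k\}$ is stationary but not $m$-dependent, and one standard cure is to truncate to a finite window of ladder blocks, after which an $m$-dependent Poisson approximation theorem applies. (Section 7 of the present paper uses the genuinely i.i.d.\ surrogate $M_k$ for related purposes; either surrogate captures the same idea.) Two remarks. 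First, your stated truncation estimate with the factor $\tfrac{y}{2}$ is not quite the one you need; since the natural truncation $\b_k^{(L)}$ (cutting the $W_j$-sums at $\nu_{k-L}$) satisfies $\b_k^{(L)} \le \b_k$ pointwise, the cleaner statement is that $n\,Q(\b_1 > y n^{1/\k}) - n\,Q(\b_1^{(L)} > y n^{1/\k}) \to (C_1 - C_1^{(L)})y^{-\k}$ with $C_1^{(L)} \uparrow C_1$, which you do gesture at but do not write as the operative estimate. Second, and more substantively, the geometric decay of the far-field contribution to $\b_k$---the content of your ``main obstacle''---is exactly the technical core of the cited result and is left unresolved in your sketch. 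In short: a sound roadmap, stopping precisely where the real work begins, which is why the authors cite rather than reprove.
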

\begin{rem}
Note that if the $\b_k$ were in fact independent, the conclusion of Lemma \ref{lem:barWnconv} would follow from the tail decay of $\b_1$ given in \eqref{bnutails}. 
Also, the convergence in \cite{psWQLXn} was actually proved for the point process with atoms at $(k/n,\b_k/n^{1/\k})$ instead of  $(k \bar\nu/n,\b_k/n^{1/\k})$, but of course this only changes the value of the constant $\l>0$ in the intensity measure of the limiting Poisson point process. 
\end{rem}

The Poissonian limit of the trap structure in Lemma \ref{lem:barWnconv} implies the following corollaries that we will use throughout the remainder of the paper. The proofs of these corollaries will be given in Appendix \ref{app:Wnweakconv}. 
\begin{cor}\label{cor:Wnweakconv}
For any environment $\w$, let the rescaled trap environment $W_n$ be given by 
\begin{equation}\label{Wndef}
 W_n = \sum_{k \in \Z} \d_{(\frac{\nu_k}{n \vphantom{n^{1/\kappa}}}, \frac{\b_k}{n^{1/\k}})}. 
\end{equation}
There exists a constant $\l>0$ such that under the measure $Q$ the pair $(W_n,\s_{W_n})$ converges in distribution on the space $\mathcal{M}_p \times D_\R^J$ to $(W,\s_W)$ where $W$ is a Poisson point process on $\R \times (0,\infty]$ with intensity measure $\l y^{-\k-1} \, dx \, dy$ and $\s_W$ is the corresponding trap measure defined in \eqref{sWdef}. 
\end{cor}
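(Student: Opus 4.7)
The plan is to deduce the corollary from Lemma \ref{lem:barWnconv} by first upgrading the marginal convergence of $\overline{W}_n$ to convergence of $W_n$ in $\mathcal{M}_p$, and then promoting this to joint convergence with $\s_{W_n}$ via a truncation argument in the $y$-coordinate.

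For the first step, I would observe that $W_n$ and $\overline{W}_n$ share the same $y$-coordinates, differing only in that the spatial coordinates are $\nu_k/n$ versus $k\bar\nu/n$. Any $f \in \mathcal{C}_0(\R \times (0,\infty])$ vanishes on $\R \times (0,\e_0)$ for some $\e_0 > 0$ (compact subsets of $(0,\infty]$ are bounded away from $0$) and is uniformly continuous. The indices $k$ that contribute to $\langle f, W_n\rangle$ and $\langle f,\overline{W}_n\rangle$ therefore satisfy $\b_k \geq \e_0 n^{1/\k}$ and $|\nu_k| \leq Ln$ for some $L<\infty$, so by \eqref{unifnuk} we obtain $\langle f, W_n\rangle - \langle f,\overline{W}_n\rangle \to 0$ in $Q$-probability. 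Combined with Lemma \ref{lem:barWnconv} this yields $W_n \Rightarrow W$ on $\mathcal{M}_p$.

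For the joint convergence, I would use the truncations $W_n^{(\e)}$ and $W^{(\e)}$ introduced in Section \ref{sec:dthdl}, keeping only atoms with $y$-coordinate at least $\e$. For $\e$ outside the (countable) set of $y$-values at which $W$ has an atom, the argument from the proof of Lemma \ref{uWnuW} shows that $W_n \Rightarrow W$ implies finite-set convergence of the atoms of $W_n^{(\e)}$ to those of $W^{(\e)}$ on any compact spatial set, making $M \mapsto (M^{(\e)}, \s_{M^{(\e)}})$ continuous at $W$. Hence
\[
(W_n^{(\e)}, \s_{W_n^{(\e)}}) \Longrightarrow (W^{(\e)}, \s_{W^{(\e)}})
\]
on $\mathcal{M}_p \times D_\R^J$. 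To remove the truncation I need two estimates: almost-sure uniform convergence $\s_{W^{(\e)}} \to \s_W$ on compacts as $\e \to 0$ (immediate because $\int_0^1 y \cdot y^{-\k-1}\,dy < \infty$ for $\k\in(0,1)$), and
\[
\lim_{\e\to 0}\limsup_{n\to\infty} Q\!\left( \sup_{|x|\leq L}|\s_{W_n}(x) - \s_{W_n^{(\e)}}(x)| \geq \d \right) = 0, \quad\forall L,\d>0.
\]
Since $\s_{W_n} - \s_{W_n^{(\e)}}$ is a monotone c\`adl\`ag function on $[-L,L]$, by Markov's inequality this reduces to bounding
\[
E_Q\!\left[ \frac{1}{n^{1/\k}} \sum_{k:\,|\nu_k|\leq Ln} \b_k \mathbf{1}_{\{\b_k < \e n^{1/\k}\}} \right]
\]
uniformly in $n$. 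Using stationarity of $\{\b_k\}$ under $Q$ and the tail bound $Q(\b_1 > x)\sim C_1 x^{-\k}$ from \eqref{bnutails}, one computes $E_Q[\b_1 \mathbf{1}_{\{\b_1 < \e n^{1/\k}\}}] = \bigo(\e^{1-\k} n^{(1-\k)/\k})$, which combined with the $\bigo(n)$-in-probability count of contributing $k$ from \eqref{unifnuk} yields a bound of order $\e^{1-\k}$ uniformly in $n$; this vanishes as $\e\to 0$ since $\k<1$. A standard truncation lemma for weak convergence then delivers the joint convergence $(W_n,\s_{W_n}) \Rightarrow (W,\s_W)$.

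The main obstacle is the uniform-in-$n$ control of the small-trap contribution, complicated by the fact that $\{\b_k\}$ is only stationary (not independent) under $Q$. Fortunately only a first-moment estimate is needed, for which stationarity together with \eqref{bnutails} suffices; no concentration bound for the dependent sequence $\{\b_k\}$ is required.
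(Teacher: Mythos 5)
Your roadmap is the same as the paper's: compare $W_n$ to $\overline{W}_n$ using \eqref{unifnuk} and uniform continuity, get $W_n\Rightarrow W$ from Lemma \ref{lem:barWnconv}, then upgrade to joint convergence of $(W_n,\s_{W_n})$ by truncating in $y$, applying the continuous mapping theorem to the truncated trap measure, and killing the small-trap contribution with a first-moment estimate using \eqref{bnutails}. One step you should tighten: \eqref{unifnuk} by itself does not give $\langle f,W_n\rangle-\langle f,\overline{W}_n\rangle\to 0$; you must also control the \emph{number} of contributing indices, i.e.\ those $k$ with $\b_k\geq\e_0 n^{1/\k}$ and $|k|\lesssim Ln$, which requires the tail bound $Q(\b_1>x)\sim C_1 x^{-\k}$ together with a Markov inequality showing this count is $\bigo(1)$ in $Q$-probability — exactly the second term the paper splits off in its estimate. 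This is a small omission rather than a different method, since you already invoke the same tail bound for the truncation error in your final step. Also note your choice of continuous map $M\mapsto(M^{(\e)},\s_{M^{(\e)}})$ differs cosmetically from the paper's $M\mapsto(M,\s_{M^{(\e)}})$: either works, but yours leaves you the extra (easy) observation that $W^{(\e)}\to W$ vaguely, $\Pv$-a.s., as $\e\to 0$.
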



\begin{cor}\label{cor:bstable}
 Under the measure $Q$ on environments, $n^{-1/\k} \sum_{|k|\leq n} \b_k$ converges in distribution to a non-negative $\k$-stable random variable. 
\end{cor}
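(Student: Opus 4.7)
The plan is to derive the corollary from Lemma \ref{lem:barWnconv} by a truncation argument. First observe the exact identity
\[
 n^{-1/\k} \sum_{|k|\leq n} \b_k = \iint_{[-\bar\nu,\bar\nu] \times (0,\infty)} y\, \overline{W}_n(dx\,dy) =: F(\overline{W}_n),
\]
which holds because $k\bar\nu/n \in [-\bar\nu,\bar\nu]$ if and only if $|k|\leq n$. The functional $F$ is not vaguely continuous on $\mathcal{M}_p$ because mass can accumulate at the $y=0$ boundary, so for each $\e>0$ I would work instead with the truncated functional $F_\e(M) := \iint_{[-\bar\nu,\bar\nu]\times[\e,\infty)} y\, M(dx\,dy)$, which is vaguely continuous at measures with no atoms on the boundary $\{\pm\bar\nu\}\times[\e,\infty)\cup[-\bar\nu,\bar\nu]\times\{\e\}$. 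The limiting Poisson point process $W$ from Lemma \ref{lem:barWnconv} has a non-atomic intensity, so $\Pv$-a.s.\ it has no such boundary atoms for any fixed $\e$ in the complement of a countable exceptional set.

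For any such $\e$, Lemma \ref{lem:barWnconv} and the continuous mapping theorem yield $F_\e(\overline{W}_n) \Lra F_\e(W)$ under $Q$. The $y$-coordinates of the atoms of $W$ whose $x$-coordinate lies in $[-\bar\nu,\bar\nu]$ form a Poisson point process on $(0,\infty)$ with intensity $2\bar\nu\l y^{-\k-1}\,dy$; since $\k<1$ the sum $F(W)$ of these $y$-coordinates is $\Pv$-a.s.\ finite, so $F_\e(W) \nearrow F(W)$ a.s.\ as $\e \ra 0^+$, and thus also in distribution. The limiting random variable $F(W)$ is a non-negative $\k$-stable random variable: its Laplace transform is
\[
 \Ev[e^{-\theta F(W)}] = \exp\!\left(-2\bar\nu\l \int_0^\infty (1-e^{-\theta y})y^{-\k-1}\,dy\right) = \exp(-c\,\theta^\k)
\]
for a suitable constant $c>0$.

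To control the truncation error $F(\overline{W}_n) - F_\e(\overline{W}_n) = n^{-1/\k}\sum_{|k|\leq n}\b_k\ind{\b_k<\e n^{1/\k}}$ uniformly in $n$, I use that under $Q$ each $\b_k$ has the same distribution as $\b_1$. From the tail bound \eqref{bnutails} there is a constant $C>0$ such that $Q(\b_1>x)\leq \min\{1,Cx^{-\k}\}$, so
\[
 E_Q[\b_1\ind{\b_1<A}] = \int_0^A Q(\b_1>x)\,dx = O(A^{1-\k}), \qquad A\ra\infty,
\]
using $\k<1$. Setting $A=\e n^{1/\k}$ and summing over $|k|\leq n$ yields
\[
 E_Q\!\left[ F(\overline{W}_n) - F_\e(\overline{W}_n) \right]
 = n^{-1/\k}(2n+1)\,E_Q[\b_1\ind{\b_1<\e n^{1/\k}}]
 = O(\e^{1-\k}),
\]
uniformly in $n$. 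Markov's inequality then shows the truncation error vanishes in $Q$-probability as first $n\ra\infty$ and then $\e\ra 0$, and a standard Slutsky-type argument combining this with the two weak convergences above gives $n^{-1/\k}\sum_{|k|\leq n}\b_k \Lra F(W)$, as claimed.

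The main obstacle I anticipate is precisely the discontinuity of $F$ in the vague topology arising from accumulation of small traps; the essential quantitative input that resolves it is the $O(\e^{1-\k})$ bound on the expected small-$\b_k$ contribution, which uses $\k<1$ in a crucial way and allows the truncation parameter $\e$ to be sent to $0$ after the weak limit in $n$.
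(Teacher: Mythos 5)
Your proof is correct and follows essentially the same route as the paper: both convert the sum to the functional $\langle y\ind{|x|\le\bar\nu},\overline W_n\rangle$, truncate at $y\ge\e$ to obtain a vaguely continuous functional, invoke Lemma \ref{lem:barWnconv} and the continuous mapping theorem, control the truncation error via the tail bound on $\b_1$ in \eqref{bnutails} (yielding the same $O(\e^{1-\k})$ estimate, which the paper records as \eqref{betatsum}), and then send $\e\to 0$. One trivial slip: $E_Q[\b_1\ind{\b_1<A}] = \int_0^A Q(x<\b_1<A)\,dx \le \int_0^A Q(\b_1>x)\,dx$, so the middle step should be an inequality rather than an equality, but this is harmless since you only need the upper bound.
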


\begin{cor}\label{cor:tBstable}
Under the averaged measure $E_Q[ P_{\w}(\cdot)]$ for the directed trap process $Z_{\mathfrak{B}}$, the 
rescaled crossing times $n^{-1/\k} \tau_{\mathfrak{B}}([0,\nu_n))$ 
converge in distribution 
to a non-negative $\k$-stable random variable, and the rescaled directed trap process $\{t\mapsto n^{-1} Z_{\mathfrak{B}}(t n^{1/\k};0)\}$ converges in distribution to the inverse of a $\k$-stable subordinator
 on the space $D_{\R_+}^U$. 
\end{cor}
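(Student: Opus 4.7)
The plan is to realize both rescaled processes in the statement as functionals of the rescaled trap environment $W_n$ from Corollary \ref{cor:Wnweakconv}. A direct substitution in \eqref{ZWdef} gives the exact identities
\[
n^{-1}Z_{\mathfrak{B}}(tn^{1/\k};0) = Z_{W_n}(t;0) \quad\text{and}\quad n^{-1/\k}\tau_{\mathfrak{B}}([0,\nu_n)) = \tau_{W_n}([0,\nu_n/n)),
\]
where the trap holding times on both sides are generated from the same i.i.d.\ Exp(1) sequence $\{\zeta_k\}$. Under the averaged measure $E_Q[P_\w(\cdot)]$ these clocks are independent of $\w$, so the vague convergence $W_n\Longrightarrow W$ from Corollary \ref{cor:Wnweakconv} lifts to joint convergence of $W_n$ together with its independent Exp(1) marks. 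The task then becomes to mimic Lemma \ref{ZWnconv} in this random setting to obtain weak convergence of the directed trap processes.

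To make the proof of Lemma \ref{ZWnconv} go through one must first verify Assumption \ref{asmtraps} in $Q$-probability. By Markov's inequality, stationarity of $\{\b_k\}$ under $Q$, the LLN $\nu_n/n\to\bar\nu$ from \eqref{unifnuk}, and the tail bound $Q(\b_1>x)\sim C_1 x^{-\k}$ from \eqref{bnutails}, which yields $E_Q[\b_1\ind{\b_1<a}] = O(a^{1-\k})$, the small-trap mass satisfies
\[
E_Q\!\left[\iint_{[-L,L]\times(0,\e)} y\, W_n(dx\,dy)\right] = O(\e^{1-\k}) \quad\text{uniformly in } n,
\]
and hence vanishes in probability as $\e\to 0$. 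Once this uniform control is in hand, the argument of Lemma \ref{ZWnconv} transfers essentially unchanged: truncating at level $\e$, using vague convergence of the truncated environments from Corollary \ref{cor:Wnweakconv}, and controlling the truncation error via the estimate above yields $\tau_{W_n}([0,\cdot)) \Longrightarrow \tau_W([0,\cdot))$ in $D^J_{[0,L]}$ for every $L$. Applying \cite[Corollary 13.6.4]{wSPL} to invert (which is valid because $W\in\mathcal{T}'$ a.s., so that $\tau_W$ is strictly increasing) and using the a.s.\ continuity of $Z_W(\cdot;0)$ upgrades this to $Z_{W_n}(\cdot;0) \Longrightarrow Z_W(\cdot;0)$ on $D^U_{\R_+}$, which is the second claim.

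For the first claim, combine $\nu_n/n\to\bar\nu$ in $Q$-probability with the almost sure continuity of $a\mapsto\tau_W([0,a))$ at $\bar\nu$ (since $W$ has no atom on a deterministic vertical line a.s.) to conclude $\tau_{W_n}([0,\nu_n/n)) \Longrightarrow \tau_W([0,\bar\nu))$. Conditionally on $W = \sum_k\delta_{(x_k,y_k)}$ this limit equals $\sum_{k:x_k\in[0,\bar\nu)} y_k\zeta_k$, the total mass of an image Poisson point process; a one-line change-of-variables computation shows this process has intensity $\l\bar\nu\,\Gamma(\k+1)\,u^{-\k-1}\,du$ on $(0,\infty)$, so $\tau_W([0,\bar\nu))$ is a non-negative totally skewed $\k$-stable random variable. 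The main obstacle throughout is the uniform small-trap estimate of the second paragraph: the process $Z_{W_n}$ is sensitive to the fine structure of $W_n$ near zero mass, and because $\{\b_k\}$ is only stationary (not i.i.d.) under $Q$, the required control must be extracted from the single-trap tail \eqref{bnutails} via stationarity rather than from direct independence.
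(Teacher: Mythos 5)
Your proof is correct and follows essentially the same route as the paper: identify the rescaled crossing time and trap process with $\tau_{W_n}$ and $Z_{W_n}$ in the rescaled environment of Corollary \ref{cor:Wnweakconv}, add independent $\text{Exp}(1)$ marks, control the small-trap mass via the tail of $\b_1$ from \eqref{bnutails}, and invert via \cite[Corollary 13.6.4]{wSPL}. The only cosmetic difference is that the paper packages the marked atoms directly into the point process $\Gamma_n=\sum_k\d_{(\nu_k/n,\,\b_k\zeta_k/n^{1/\k})}$ and works with the hitting-time process $S_n(t)=\langle y\ind{x\in[0,t)},\Gamma_n\rangle$, whereas you route through Lemma \ref{ZWnconv}; the underlying truncation, convergence, and inversion steps are identical.
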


Having given the Poissonian limit of the trap environment, we now turn to a review of the coupling of the random walk in the environment $\w$ with the directed trap process in the trap environment $\mathfrak{B}$.
We will give a brief overview of the nature of this coupling and refer the reader to \cite{psWQLTn,psWQLXn} for more details. 
First of all, we expand the measure $P_\w$ to contain an i.i.d.\ sequence of Exp(1) random variables $\{\zeta_k\}_{k \in \Z}$. These exponential random variables are used to generate the holding times of the directed trap process at each trap location. That is, the directed trap process $Z_{\mathfrak{B}}$ waits at location $\nu_{k}$ for time $\beta_k \zeta_k$ before jumping to location $\nu_{k+1}$. 
The coupling of this process with a random walk in the environment $\w$ is then obtained by coupling each crossing time $T_{\nu_{k+1}} - T_{\nu_k}$ between successive ladder locations with the time $\beta_k \zeta_k$ that it takes the directed trap process to cross the same distance. 
Without giving the details of this coupling procedure, we simply note that this coupling is done so that the sequence of the coupled hitting times $\{\left(T_{\nu_{k+1}} - T_{\nu_k}, \b_k \zeta_k\right)\}_{k \geq 0}$ is independent under the measure $P_\w$. 
Moreover, in \cite[Lemma 4.4]{psWQLXn} it was shown that this coupling can be done so that 
\begin{equation}\label{htcouple}
 \lim_{n\ra\infty} E_Q\left[ P_\w\left( \sup_{k\leq An} |T_{\nu_k} - \tau_{\mathfrak{B}}([0,\nu_k))| \geq \d n^{1/\k} \right) \right] = 0, \quad \forall \d>0, \, A<\infty. 
\end{equation}
(Since under the Assumptions \ref{asmiid}--\ref{asmnl} the rescaled hitting times $n^{-1/\k} T_n$ converge in distribution with respect to the averaged measure $\P$ \cite{kksStable}, the above coupling is useful for comparing the asymptotic distributions of the hitting times for the two processes.)
We note that the coupling as constructed only couples the hitting times of the two processes. The path of the random walk is then constructed by first determining the crossing times $T_{\nu_{k+1}} - T_{\nu_k}$ via this coupling and then by sampling the paths of the walk 
$\{X_i, \, T_{\nu_k} \leq i \leq T_{\nu_{k+1}} \}$
between hitting times of successive ladder locations
with respect to the quenched measure conditioned on the values of the crossing times. 
Our next result shows that this coupling procedure yields the following comparison of the locations of the random walk and the directed trap process. 

\begin{prop}\label{XZcouple}
 For any environment $\w$ we can expand the quenched distribution $P_\w$ to give a coupling of the random walk $\{X_n\}_{n\geq 0}$ with a directed trap process $\{Z_{\mathfrak{B}}(t;0)\}_{t\geq 0}$ in the trap environment $\mathfrak{B}$ in such a way that 
\[
 \lim_{n\ra\infty} E_Q\left[ P_\w\left( \sup_{t\leq T} \left| X_{t n^{1/\k}} - Z_{\mathfrak{B}}(t n^{1/\k};0) \right| \geq \e n \right) \right] = 0, \quad \forall T<\infty, \, \e>0. 
\]
\end{prop}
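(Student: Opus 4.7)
The coupling described just above the proposition matches the walk's hitting times of ladder locations with the directed trap's jump times via the pairs $(T_{\nu_{k+1}}-T_{\nu_k},\b_k\zeta_k)$, and \eqref{htcouple} gives quantitative closeness of $T_{\nu_k}$ and $\tau_{\mathfrak{B}}([0,\nu_k))$ up to $\d n^{1/\k}$ uniformly in $k\leq An$. The plan is to translate this time-proximity into spatial-proximity; a crude bound would give $|X_{tn^{1/\k}}-Z_{\mathfrak{B}}(tn^{1/\k};0)|\leq \d n^{1/\k}$, which is insufficient since $\d n^{1/\k}\gg \e n$ when $\k<1$. For each $t\in[0,T]$ let $k(t)$ be the index with $Z_{\mathfrak{B}}(tn^{1/\k};0)=\nu_{k(t)}$, i.e.\ $\tau_{\mathfrak{B}}([0,\nu_{k(t)}))\leq tn^{1/\k}<\tau_{\mathfrak{B}}([0,\nu_{k(t)+1}))$, and let $j(t)$ be the index with $T_{\nu_{j(t)}}\leq tn^{1/\k}<T_{\nu_{j(t)+1}}$. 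Then $X_{tn^{1/\k}}\in[\nu_{j(t)}-B_{j(t)},\,\nu_{j(t)+1}]$, where $B_{j(t)}\geq 0$ denotes the backtracking depth of the walk's excursion from $\nu_{j(t)}$ before hitting $\nu_{j(t)+1}$. The task is therefore to show that $|\nu_{k(t)}-\nu_{j(t)}|$ and $B_{j(t)}+(\nu_{j(t)+1}-\nu_{j(t)})$ are each at most $\e n/2$ uniformly in $t\in[0,T]$ with high averaged probability.

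I would work on a good event $G_n$ of averaged probability $\geq 1-\eta$ on which: (i) by Corollary \ref{cor:tBstable}, $\sup_{t\leq T}k(t)\leq An$ for some $A<\infty$ depending on $T,\eta$; (ii) by \eqref{htcouple}, $\sup_{k\leq An}|T_{\nu_k}-\tau_{\mathfrak{B}}([0,\nu_k))|\leq \d n^{1/\k}$, where $\d=\d(\e)$ is to be chosen small; (iii) by the exponential tail in \eqref{bnutails}, $\max_{k\leq An}(\nu_{k+1}-\nu_k)\leq C\log n$; and (iv) $\max_{k\leq An}B_k\leq C\log n$, via a standard estimate using that the potential strictly descends at ladder locations (so the quenched probability the walk started at $\nu_k$ reaches $\nu_k-m$ before $\nu_{k+1}$ decays exponentially in $m$), combined with a union bound in $k$. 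On $G_n$, comparing the defining inequalities for $k(t)$ and $j(t)$ using (ii) yields $\tau_{\mathfrak{B}}([\nu_{\min(j,k)+1},\nu_{\max(j,k)}))\leq 3\d n^{1/\k}$: the directed trap process spends only $O(\d n^{1/\k})$ time crossing the $|j(t)-k(t)|-1$ ladder blocks that separate $\nu_{j(t)}$ from $\nu_{k(t)}$.

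The main technical step, and the principal obstacle, is to rule out uniformly in starting index $\ell\in[0,An]$ the existence of $M:=\lceil \e n/(2\bar\nu)\rceil$ consecutive ladder blocks with $\sum_{m=\ell+1}^{\ell+M}\b_m\zeta_m\leq 3\d n^{1/\k}$. By Corollary \ref{cor:bstable}, for each fixed $\ell$ the rescaled sum $M^{-1/\k}\sum_{m=\ell+1}^{\ell+M}\b_m\zeta_m$ is distributionally close to a positive $\k$-stable variable $S_\k$, so the probability of the small-sum event is approximately $\Pv(S_\k\leq 3\d(2\bar\nu/\e)^{1/\k})$. Since the lower tail $\Pv(S_\k\leq x)$ decays stretched-exponentially as $x\to 0^+$, and hence faster than any polynomial, the union bound over the $O(n)$ choices of $\ell$ is absorbed by taking $\d$ sufficiently small relative to $\e$. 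This gives $|j(t)-k(t)|\leq M$ uniformly in $t$ on a slight enlargement of $G_n$; then \eqref{unifnuk} yields $|\nu_{j(t)}-\nu_{k(t)}|\leq M\bar\nu+o(n)\leq \e n/2+o(n)$, and combined with (iii)--(iv) this produces $|X_{tn^{1/\k}}-Z_{\mathfrak{B}}(tn^{1/\k};0)|\leq \e n$ for all $n$ large, whereupon sending $\eta\to 0$ completes the proof.
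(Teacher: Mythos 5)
Your route differs genuinely from the paper's. You track the block indices $j(t)$ (for the walk) and $k(t)$ (for $Z_{\mathfrak{B}}$) explicitly and aim to bound $|j(t)-k(t)|$ via a lower-tail estimate on block sums. The paper instead replaces $X$ by its running maximum $X^*$ (citing \cite[Lemma 6.1]{psWQLXn} for the difference), shows the sandwich inclusion \eqref{spacetimecouple}
\[
 Z_{\mathfrak{B}}((t-\d)n^{1/\k}) \ \leq\ X^*_{tn^{1/\k}} \ \leq\ Z_{\mathfrak{B}}((t+\d)n^{1/\k}) + \tfrac{\e n}{2}
\]
on the good event, and then kills the remaining term $\sup_{t\leq T}\bigl[Z_{\mathfrak{B}}((t+\d)n^{1/\k}) - Z_{\mathfrak{B}}((t-\d)n^{1/\k})\bigr]$ via the functional convergence $t\mapsto n^{-1}Z_{\mathfrak{B}}(tn^{1/\k})\Rightarrow$ inverse stable subordinator from Corollary \ref{cor:tBstable}, whose limit is continuous, so its modulus of continuity vanishes as $\d\ra 0$. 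This packages the ``no short window with small crossing time'' statement as a single tightness/continuity fact rather than a pointwise estimate.

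The gap in your argument is the union bound. For a fixed $\d>0$, Corollary \ref{cor:bstable} (convergence in distribution) only tells you that the small-sum probability for a single window of length $M=\lceil\e n/(2\bar\nu)\rceil$ converges, as $n\ra\infty$, to the constant $c_\d := \Pv\bigl(S_\k \leq 3\d\,(2\bar\nu/\e)^{1/\k}\bigr)$, which is strictly positive for every $\d>0$. A union bound over $O(n)$ starting indices $\ell$ then contributes $O(n)\cdot c_\d \ra \infty$: the stretched-exponential decay of $\Pv(S_\k\leq x)$ as $x\ra 0^+$ is irrelevant because the order of limits is $n\ra\infty$ first, $\d\ra 0$ second, so you cannot let $\d$ depend on $n$. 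As written, the argument does not close. It can be repaired: because a bad window $[\ell+1,\ell+M]$ contains a full window of length $\lfloor M/2\rfloor$ from a fixed grid of spacing $\lfloor M/2\rfloor$, the union bound really only needs $O(A\bar\nu/\e)$ terms (independent of $n$), after which letting $n\ra\infty$ and then $\d\ra 0$ works. But this is essentially re-deriving, pointwise, the tightness of $\tau_{\mathfrak{B}}([0,n\cdot))/n^{1/\k}$ that the paper invokes once through Corollary \ref{cor:tBstable}.

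Two smaller remarks. Your step (iv) (backtracking depth $\leq C\log n$ uniformly in $k\leq An$) is plausible but not trivial: the quenched backtracking probability depends on the environment left of $\nu_k$, not just on the ladder condition at $\nu_k$, and you need a quantitative uniform bound. The paper sidesteps this entirely with the running-max substitution. Also, your conclusion from (ii) should give $\tau_{\mathfrak{B}}$ over the intervening blocks $\leq 2\d n^{1/\k}$ rather than $3\d$, but that constant is harmless.
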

\begin{proof}
For convenience of notation, in the proof of the proposition we will denote the directed trap process $Z_{\mathfrak{B}}(t;0)$ by $Z_{\mathfrak{B}}(t)$ instead. 
Let $X_n^* = \max_{i\leq n} X_i$ be the running maximum of the random walk. 
It follows from \cite[Lemma 6.1]{psWQLXn} that 
$\sup_{t\leq T} n^{-1} ( X_{tn^{1/\k}}^* - X^{\vphantom{*}}_{t n^{1/\k}} )$ 
converges to $0$ in $\P$-probability\footnote{In fact, the proof of Lemma 6.1 in \cite{psWQLXn} can be used to show that the convergence is $\P$-a.s.}. 
Since the measure $Q$ on environments is obtained by conditioning the measure $P$ on an event of positive probability we can conclude that 
$\sup_{t\leq T} n^{-1} ( X^*_{tn^{1/\kappa}} -X^{\vphantom{*}}_{tn^{1/\kappa}} )$ 
converges to $0$ in probabability with respect to the averaged measure $E_Q[P_\w(\cdot)]$ as well. Therefore, it is enough to show that
\[
 \lim_{n\ra\infty} E_Q\left[ P_\w\left( \sup_{t\leq T} \left| X_{t n^{1/\k}}^* - Z_{\mathfrak{B}}(t n^{1/\k}) \right| \geq \e n \right) \right] = 0, \quad \forall T<\infty, \, \e>0. 
\]

We now couple the random walk with the directed trap process according the the procedure outlined above prior to the statement of Proposition \ref{XZcouple}. 
 To use the control of the hitting times in \eqref{htcouple} to obtain control on $|X_{tn^{1/\k}}^* - Z_\mathfrak{B}(tn^{1/\k})|$, note that 
\begin{equation}\label{spacetimecouple}
\begin{split}
& \left\{ \sup_{k\leq An} |T_{\nu_k} - \tau_{\mathfrak{B}}([0,\nu_k))| < \d n^{1/\k} \right\} 
\cap \left\{ \max_{k\leq An} (\nu_{k} - \nu_{k-1}) \leq \frac{\e n}{2} \right\} \\
& \subset \left\{ Z_\mathfrak{B}((t-\d)n^{1/\k}) \leq X_{t n^{1/\k}}^* \leq  Z_\mathfrak{B}((t+\d)n^{1/\k}) + \frac{\e n}{2}, \text{ for all } t\leq \frac{\tau_\mathfrak{B}([0,\nu_{\fl{An}}))}{n^{1/\k}} - \d \right\}.
\end{split}
\end{equation}
To see this, first of all note that $t\leq \frac{\tau_\mathfrak{B}([0,\nu_{\fl{An}}))}{n^{1/\k}} - \d $ implies that $Z_\mathfrak{B}((t+\d)n^{1/\k}) = \nu_k$ for some $k < \fl{An}$. Since the process $Z_\mathfrak{B}$ is non-decreasing this implies that $\tau_\mathfrak{B}([0,\nu_{k+1})) > (t+\d)n^{1/\k}$, and then the control on the hitting times in the first event in \eqref{spacetimecouple} implies that $T_{\nu_{k+1}} > t n^{1/\k}$, or equivalently that 
\[
 X_{t n^{1/\k}}^* < \nu_{k+1} \leq Z_\mathfrak{B}((t+\d)n^{1/\k}) + (\nu_{k+1}-\nu_k).
\]
This proves the upper bound on $X_{t n^{1/\k}}^*$ needed in the event on the right side of \eqref{spacetimecouple}. The corresponding lower bound on $X_{tn^{1/\k}}^*$ is proved similarly. 

Using \eqref{spacetimecouple} we can conclude that
\begin{align*}
 & E_Q\left[ P_\w\left( \sup_{t\leq T} \left| X_{t n^{1/\k}}^* - Z_\mathfrak{B}(t n^{1/\k}) \right| \geq \e n \right)  \right] \\
&\leq  E_Q\left[ P_\w\left( \sup_{k\leq An} \left| T_{\nu_k} - \tau_\mathfrak{B}([0,\nu_k)) \right| \geq \d n^{1/\k} \right) \right]  +  E_Q\left[ P_\w\left( \tau_\mathfrak{B}([0,\nu_{\fl{An}})) \leq (T+\d) n^{1/\k} \right) \right] \\
&\quad +  E_Q\left[ P_\w\left( \sup_{t\leq T} Z_\mathfrak{B}((t+\d)n^{1/\k}) - Z_\mathfrak{B}((t-\d)n^{1/\k}) \geq \frac{\e n}{2} \right) \right]+ Q\left( \max_{k\leq An} (\nu_{k} - \nu_{k-1}) > \frac{\e n}{2} \right)
\end{align*}
Equation \eqref{htcouple} shows that the first term on the right vanishes as $n\ra\infty$ for any fixed $A<\infty$ and $\d>0$, and 
since the $\nu_{k+1}-\nu_k$ are i.i.d.\ with exponential tails the last term on the right vanishes for any fixed $\e>0$. 
The remaining two terms are handled by Corollary \ref{cor:tBstable}. 
The second term vanishes as we first take $n\ra\infty$ and then let $A\ra\infty$ for any $\d>0$ fixed,
and since  
the process $t\mapsto n^{-1} Z_\mathfrak{B}(tn^{1/\k})$ converges in distribution on $D_{\R_+}^U$ to the inverse of a $\k$-stable subordinator (which is a continuous, non-decreasing process), the third probability on the right vanishes as $n\ra\infty$ and then $\d\ra 0$. 
\end{proof}

\section{Coupling the RWRE system with a directed trap system}\label{sec:couple1}

In this section we will use the couplings of a RWRE with a directed trap trap process to give a coupling estimate on systems of independent RWRE and systems of independent particles in a directed trap environment. 
Unfortunately this will not quite be enough to give us the desired hydrodynamic limit for systems of independent RWRE as stated in Theorem \ref{th:RWREhdl} since the systems of independent RWRE considered in this section differ in two ways from those in the statement of Theorem \ref{th:RWREhdl}. 
\begin{itemize}
 \item The environment $\w$ will be chosen according to the distribution $Q$ instead of the original distribution $P$ on environments. 
 \item The initial configurations of particles $\chi_0$ will be different from the locally stationary initial configurations in Theorem \ref{th:RWREhdl}. In particular, in this section we will use initial configurations where all the particles start at some ladder location $\nu_k$ of the environment. 
\end{itemize}
These two difficulties will be resolved in the following section. 

To describe the system of directed trap processes that we will couple with systems of RWRE we first need to introduce some new notation for the law of the directed trap processes with certain initial configurations. If $W = \sum_k \d_{(x_k,y_k)} \in \mathcal{T}$ is a trap environment and $u:\R \ra (0,\infty)$ is a continuous function, we will let $P_W^u$ denote the law of the system $\eta_t^W$ of independent directed trap processes in the trap environment $W$ with an initial configuration that is product Poisson with $\eta_0^W(x_k) \sim \text{Poisson}( u(x_k) y_k )$. 
We will be interested in systems of directed trap processes in the trap environment $\mathfrak{B} = \sum_k \d_{(\nu_k, \b_k)}$ with initial conditions that are product Poisson with $\eta_0^{\mathfrak{B}}(\nu_k) \sim \text{Poisson}(u(\nu_k/n) \b_k )$. 
If we let $u_n(\cdot) = u(\cdot/n)$ denote a rescaled modification of the function $u$, then $P_{\mathfrak{B}}^{u_n}$ is the law of such a system of directed trap processes. 
The following Proposition shows how Theorem \ref{th:dthdl} can be used to give a hydrodynamic limit for these systems of independent directed trap processes.


\begin{prop}
There exists a constant $\l>0$ such that for any $u \in \mathcal{C}_0^+$ and $\phi \in \mathcal{C}_0(\R_+ \times \R)$,
 \begin{align*}
  \lim_{n\ra\infty}  & E_Q\left[ P_{\mathfrak{B}}^{u_n} \left( \frac{1}{n^{1/\k}} \int \sum_{k \in \Z} \eta_{t n^{1/\k}}^{\mathfrak{B}} (\nu_k) \phi(t,\nu_k/n) dt  \in \cdot \right) \right] \\
&\qquad\qquad =  \Pv\left( \iint u_W(t,x) \phi(t,x)  \, \s_W(dx) \,dt\in \cdot \right), 
 \end{align*}
where $u_n(\cdot) = u(\cdot/n)$ and under the measure $\Pv$, $W$ is a Poisson point process on $\R \times (0,\infty)$ with intensity measure $\l y^{-\k-1} \, dx \, dy$. 
\end{prop}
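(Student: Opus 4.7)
The plan is to reduce the proposition to Theorem \ref{th:dthdl} via a space-time rescaling of the directed trap system. For each environment $\w$, define the rescaled trap environment
\[
W_n = \sum_{k \in \Z} \d_{(\nu_k/n,\, \b_k/n^{1/\k})}.
\]
Under the rescaling $(x,t)\mapsto(x/n,t/n^{1/\k})$, the directed trap process in $\mathfrak{B}$ becomes a directed trap process in $W_n$: the configuration $\eta^{\mathfrak{B}}_{tn^{1/\k}}$ at $\nu_k$ coincides with the configuration $\tilde\eta^n_t$ at $\nu_k/n$ of a system of independent particles in $W_n$. The initial configuration of $\tilde\eta^n$ is product Poisson with mean $u(\nu_k/n)\b_k = n^{1/\k} u(x_k^n) y_k^n$, matching Assumption \ref{asmtrapic} with $a_n = n^{1/\k}$ and atoms $(x_k^n,y_k^n) = (\nu_k/n,\b_k/n^{1/\k})$. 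Moreover, the random quantity in the proposition is precisely the left-hand side of \eqref{dthdllim} for this rescaled system.

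The next step is to verify the hypotheses of Theorem \ref{th:dthdl} for the (now random) sequence $W_n$. Corollary \ref{cor:Wnweakconv} provides the weak convergence $(W_n,\s_{W_n}) \Ra (W,\s_W)$ under $Q$, where $W$ is a Poisson point process on $\R \times (0,\infty]$ with intensity $\l y^{-\k-1}\,dx\,dy$. Since $W$ almost surely has atoms spatially dense in $\R$, it lies in $\mathcal{T}'$ almost surely. By Skorohod's representation we may realize this convergence almost surely on a common probability space carrying both the environments and the particle systems.

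The main obstacle is to verify Assumption \ref{asmtraps} in this random setting, since Theorem \ref{th:dthdl} is stated for deterministic sequences. The essential input is the tail estimate $Q(\b_1 > x) \sim C_1 x^{-\k}$ from \eqref{bnutails}, which combined with the i.i.d.\ structure of the blocks $B_k$ under $Q$ and the renewal estimate \eqref{unifnuk} yields the uniform-in-$n$ expectation bound
\[
E_Q\left[ \iint_{[-L,L]\times(0,\e)} y\, W_n(dx\,dy) \right] \leq C_L\, \e^{1-\k}.
\]
In the Skorohod coupling, $\s_{W_n}([-L,L]) \to \s_W([-L,L])$ almost surely (taking $L$ so that $W$ has no atom at $\pm L$), and for every $\e$ outside a countable set the contribution $\iint_{[-L,L]\times[\e,\infty]} y\,W_n(dx\,dy)$ also converges almost surely. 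Subtracting gives
\[
\limsup_{n\to\infty} \iint_{[-L,L]\times(0,\e)} y\, W_n(dx\,dy) = \iint_{[-L,L]\times(0,\e)} y\, W(dx\,dy) \quad \text{a.s.},
\]
which vanishes as $\e\to 0$ since $\s_W([-L,L])$ is almost surely finite. Monotonicity in $\e$ extends the conclusion to all $\e>0$, so Assumption \ref{asmtraps} holds almost surely under the coupling.

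With every hypothesis of Theorem \ref{th:dthdl} verified almost surely, the theorem provides the convergence in probability of the rescaled integral to $\iint u_W(t,x)\phi(t,x)\,\s_W(dx)\,dt$ along the Skorohod coupling. Combining this with the almost sure convergence $(W_n,\s_{W_n}) \to (W,\s_W)$ yields the claimed convergence in distribution under the averaged measure $E_Q[P_{\mathfrak{B}}^{u_n}(\cdot)]$, completing the proof.
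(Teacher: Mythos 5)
Your proposal is correct and follows essentially the same route as the paper: rescale to a directed trap system in $W_n$, apply the Skorohod representation to Corollary \ref{cor:Wnweakconv}, verify Assumptions \ref{asmtrapc} and \ref{asmtraps} almost surely under the coupling, and invoke Theorem \ref{th:dthdl}. Your verification of Assumption \ref{asmtraps} via the almost-sure convergence of $\s_{W_n}([-L,L])$ and of the $[\e,\infty]$-truncated mass (for $\e$ outside a countable set) is a fleshed-out version of the step the paper leaves implicit; one minor inaccuracy is the aside attributing the expectation bound to the ``i.i.d.\ structure of the blocks,'' since the $\b_k$ are stationary but not independent under $Q$, though that remark is not used in the actual argument.
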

\begin{proof}
 Note that in the hydrodynamic limit we are trying to prove, we are rescaling space by $n$ and time by $n^{1/\k}$. This rescaling can instead be incorporated into the trap environment. In particular, if we let
 $W_n$ be the rescaled version of $\mathfrak{B}$ as defined in \eqref{Wndef}
then it is easy to see that 
\begin{align*}
&  P_{\mathfrak{B}}^{u_n} \left( \frac{1}{n^{1/\k}} \int \sum_{k \in \Z} \eta_{t n^{1/\k}}^{\mathfrak{B}} (\nu_k) \phi(t,\nu_k/n) \, dt \in \cdot \right)\\
&\qquad = P_{W_n}^{n^{1/\k} u} \left( \frac{1}{n^{1/\k}} \int \sum_{k \in \Z} \eta_t^{W_n}(\nu_k/n) \phi(t,\nu_k/n) \, dt \in \cdot \right). 
\end{align*}
The probabilities on the right side are in the right format to apply Theorem \ref{th:dthdl} with $a_n = n^{1/\k}$, but unfortunately the point processes $W_n$ do not converge almost surely to a fixed $W \in \mathcal{M}_p$. 
However, a consequence of Corollary \ref{cor:Wnweakconv} is that
there exists a probability space with measure $\Pv$ containing a sequence of point processes $\tilde{W}_n$ such that $\tilde{W}_n$ has the same distribution as $W_n$ for every $n\geq 1$, but for which $(\tilde{W}_n,\s_{\tilde{W}_n})$ converges $\Pv$-almost surely to a random pair $(W,\s_W)$ where $W$ is a Poisson point process with intensity measure $\l y^{-\k-1} \, dx \, dy$. 
It follows
that with probability one the sequence $\tilde{W}_n$ satisfies Assumptions \ref{asmtrapc} and \ref{asmtraps}. 
Thus, applying Theorem \ref{th:dthdl} we can conclude that 
\begin{align*}
& \lim_{n\ra\infty} E_Q\left[  P_{\mathfrak{B}}^{u_n} \left( \frac{1}{n^{1/\k}} \int \sum_{k \in \Z} \eta_{t n^{1/\k}}^{\mathfrak{B}} (\nu_k) \phi(t,\nu_k/n)  \, dt \leq z \right) \right] \\
&\quad = \lim_{n\ra\infty} \Ev \left[ P_{\tilde{W}_n}^{n^{1/\k} u} \left( \frac{1}{n^{1/\k}} \int \sum_{k \in \Z} \eta_t^{\tilde{W}_n}(\nu_k/n) \phi(t,\nu_k/n) \, dt \leq z \right) \right] \\
&\quad = \Pv\left( \iint u_W(t,x) \phi(t,x) \, \s_W(dx) \, dt \leq z \right)
\end{align*}
for any $z \in \R$ where the distribution function on the right is continuous at $z$. 
\end{proof}

Next we wish to couple the system of directed traps $\eta_t^{\mathfrak{B}}$ with a system of independent RWRE. Given an environment $\w$ and a function $u \in \mathcal{C}_0^+$, we will let $\eta_t^{\mathfrak{B}}$ have distribution $P_{\mathfrak{B}}^{u_n}$. That is, the initial configuration $\eta_0^{\mathfrak{B}}$ is product Poisson with $\eta_0^{\mathfrak{B}}(\nu_k) \sim \text{Poisson}(u(\nu_k/n) \b_k)$ for every $k \in \Z$. 
The related system of independent RWRE $\chi_t$ will have an initial configuration that is also product Poisson with 
\begin{equation}\label{rwnukic}
 \chi_0(x) \sim 
 \begin{cases}
  \text{Poisson}(u(\frac{\nu_k}{n}) \b_k) & \text{if } x = \nu_k \text{ for some } k \in \Z\\
  \d_0 & x \notin \{\nu_k\}_{k\in\Z}.
 \end{cases}
\end{equation}
We will denote the law of this initial configuration by $\hat{\mu}_u^n$ so that the system of RWRE has quenched law denoted by $P_\w^{\hat{\mu}_u^n}$. 

Note that we can obviously couple the initial configurations of these two systems so that $\eta_0^{\mathfrak{B}}(\nu_k) = \chi_0(\nu_k)$ for all $k \in \Z$; that is the systems start with the same number of particles at each site. Next, we can couple the evolution of the two systems by pairing each random walk particle with a corresponding directed trap particle at the same starting location. 
Each of these couplings is performed independently and is done according to the method given in \cite{psWQLTn,psWQLXn} which is outlined above prior to Proposition \ref{XZcouple}. 
With this coupling of the systems of particles we can obtain the following result. 

\begin{lem}\label{lem:chietacouple}
 For every environment $\w$ (with corresponding trap environment $\mathfrak{B}$) and every $n\geq 1$, there exists a coupling $P_{\w,\mathfrak{B}}^{\hat{\mu}_u^n,u_n}$ of a system $\chi$ of independent RWRE  in environment $\w$ with a system $\eta^{\mathfrak{B}}$ of directed trap particles  in the trap environment $\mathfrak{B}$ such that $\chi$ and $\eta^{\mathfrak{B}}$ have marginal distributions $P_\w^{\hat\mu_u^n}$ and $P_{\mathfrak{B}}^{u_n}$, respectively. Moreover, this coupling can be constructed so that 
\begin{equation}\label{chietacouple}
 E_Q\left[ P_{\w,\mathfrak{B}}^{\hat{\mu}_u^n,u_n}\left( \sup_{t\leq T} \left| \sum_{x \in \Z} \chi_{tn^{1/\k}}(x)\phi(t,x/n) - \sum_{k \in \Z} \eta_{t n^{1/\k}}^{\mathfrak{B}}(\nu_k) \phi(t,\nu_k/n) \right|  \geq \e n^{1/\k} \right)  \right] = 0, 
\end{equation}
for any $\e>0$, $T<\infty$ and $\phi \in \mathcal{C}_0(\R_+ \times \R)$. 
\end{lem}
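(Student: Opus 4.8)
The plan is to use the fact that the coupling described before the statement pairs each RWRE particle with a directed trap particle starting from the same ladder location $\nu_k$, and that under this pairing the individual discrepancies $|X^{k,j}_{tn^{1/\k}} - Z^{k,j}_{\mathfrak{B}}(tn^{1/\k})|$ are controlled by Proposition \ref{XZcouple}. First I would decompose the difference of the two weighted sums by writing, using the superposition of particles over starting sites,
\[
 \sum_{x\in\Z}\chi_{tn^{1/\k}}(x)\phi(t,\tfrac{x}{n}) - \sum_{k\in\Z}\eta^{\mathfrak{B}}_{tn^{1/\k}}(\nu_k)\phi(t,\tfrac{\nu_k}{n}) = \sum_{k\in\Z}\sum_{j=1}^{\chi_0(\nu_k)}\left( \phi\bigl(t,\tfrac{X^{k,j}_{tn^{1/\k}}}{n}\bigr) - \phi\bigl(t,\tfrac{Z^{k,j}_{\mathfrak{B}}(tn^{1/\k})}{n}\bigr) \right),
\]
since $\eta^{\mathfrak{B}}_0(\nu_k)=\chi_0(\nu_k)$ and each pair evolves on a common probability space. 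Taking the supremum over $t\le T$ and then the expectation, and bounding each summand by the modulus of continuity $\Delta(\phi;\cdot)$ evaluated at the rescaled discrepancy $n^{-1}\sup_{t\le T}|X^{k,j}_{tn^{1/\k}} - Z^{k,j}_{\mathfrak{B}}(tn^{1/\k})|$, I would reduce \eqref{chietacouple} to controlling the expected number of pairs with a non-negligible discrepancy.

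The key steps, in order: (1) Restrict attention to particles whose starting ladder location lies in a compact set: since $u$ has compact support, only $\nu_k$ with $|\nu_k|\le Ln$ (equivalently $|k| \lesssim n$ by \eqref{unifnuk}) carry particles, and by Markov's inequality $\Ev[\sum_k \chi_0(\nu_k)\ind{|\nu_k|\le Ln}] = \sum_k u(\tfrac{\nu_k}{n})\b_k\ind{|\nu_k|\le Ln}$, which after rescaling is $n^{1/\k}\langle u,\s_{W_n}\rangle$-type and is $O(n^{1/\k})$ in $Q$-expectation by the tail bounds \eqref{bnutails} and Lemma \ref{lem:barWnconv}. (2) Condition on the environment $\w$ and on the initial configuration, use independence of the pair couplings across $(k,j)$, and write the quenched expectation of the left side of \eqref{chietacouple} (after dividing by $n^{1/\k}$) as at most $\frac{1}{n^{1/\k}}\sum_{k}\chi_0(\nu_k) \cdot 2\|\phi\|_\infty \,\ind{A_{k}}$ plus a term where $A_k$ is the event that the $k$-th pair has discrepancy exceeding $\e' n$; more cleanly, bound by $2\|\phi\|_\infty$ times the fraction of ``bad'' particles plus $\Delta(\phi;\e')$ times the total mass. (3) Show the expected fraction of bad particles vanishes: this is where Proposition \ref{XZcouple} enters — it gives $\sup_{t\le T}|X_{tn^{1/\k}} - Z_{\mathfrak{B}}(tn^{1/\k})| < \e n$ with averaged-$Q$ probability tending to $1$ for a \emph{single} walk started at $0$, and by the stationarity of $Q$ under shifts by ladder locations the same estimate holds uniformly for a walk started at any $\nu_k$ with $|k|\le An$. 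Combining this per-particle estimate with the $O(n^{1/\k})$ bound on the total expected particle mass from step (1), and using Fubini to sum over particles, yields that the $Q$-averaged quenched expectation of $n^{-1/\k}|\text{difference}|\ind{\text{bad}}$ tends to $0$. (4) Finally let $\e'\to 0$ so that $\Delta(\phi;\e')\to 0$, and apply Markov's inequality to pass from the $L^1$ bound to the probability statement \eqref{chietacouple}.

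The main obstacle I anticipate is step (3): Proposition \ref{XZcouple} is stated only for a single random walk started at the origin, and to apply it to every particle simultaneously I need a \emph{uniform-in-starting-point} version. The resolution is to observe that, conditionally on $\w$, a walk started at $\nu_k$ and its paired trap particle have the same joint law as a walk started at $0$ in the shifted environment $\theta^{\nu_k}\w$ paired with its trap process in $\mathfrak{B}(\theta^{\nu_k}\w)$; since $Q$ is invariant under shifts by ladder locations, the $Q$-averaged error probability in Proposition \ref{XZcouple} is identical for each $k$. One then needs to control the expected number of indices $k$ with $|k|\le An$ for which the coupling fails simultaneously with having a particle there; since the number of particles at $\nu_k$ is independent of the failure event given $\w$, this factorizes and the sum over $k$ of (expected particles)$\times$(failure probability) is $O(n^{1/\k})\cdot o(1) = o(n^{1/\k})$, which is exactly what is needed after dividing by $n^{1/\k}$. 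A minor secondary point is handling the contribution of pair-discrepancies on the event that $Z_{\mathfrak{B}}$ has not yet exited $[-An,An]$ by time $Tn^{1/\k}$; this is absorbed by first choosing $A$ large using Corollary \ref{cor:tBstable}, exactly as in the proof of Proposition \ref{XZcouple}.
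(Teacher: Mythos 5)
Your overall decomposition is the same as the paper's: pair each trap particle with a RWRE particle started at the same ladder location, split particles into ``good'' and ``bad'' according to whether the rescaled discrepancy exceeds a threshold, charge good particles $\Delta(\phi;\cdot)$ and bad ones $2\|\phi\|_\infty$, and control the total mass and the failure probabilities separately. The gap is in how you sum the per-particle estimate. In step (1) you assert that the expected total particle mass $\sum_k u(\nu_k/n)\beta_k$ is $O(n^{1/\kappa})$ \emph{in $Q$-expectation}; this is false, since $Q(\beta_1>x)\sim C_1x^{-\kappa}$ with $\kappa<1$ (see \eqref{bnutails}) gives $E_Q[\beta_1]=\infty$ — the mass is only tight at scale $n^{1/\kappa}$, not integrable. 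More seriously, in step (3) you bound $E_Q\bigl[\sum_k u(\nu_k/n)\,\beta_k\,p_k(\omega)\bigr]$ (with $p_k(\omega)$ the quenched probability that the $k$-th coupled pair separates by more than $\epsilon' n$) by a product of (expected particles) and (averaged failure probability), appealing to independence given $\omega$. That factorization holds only for the quenched expectation: under $E_Q$ the trap size $\beta_k$ and $p_k(\omega)$ are strongly dependent ($p_k$ is a functional of the environment around $\nu_k$, in particular of $\beta_k$ itself), so $E_Q[\beta_k\,p_k(\omega)]\neq E_Q[\beta_k]\,E_Q[p_k(\omega)]$ — and the first factor is infinite in any case. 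Shift invariance of $Q$ under ladder-location shifts only gives you that $E_Q[p_k(\omega)]$ is the same for every $k$ and is $o(1)$ by Proposition \ref{XZcouple}; it does not deliver the $L^1$ bound your Fubini--Markov step in (4) requires.

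The paper avoids taking any $Q$-expectation of the particle mass: it conditions on the initial configuration and applies Chebyshev quenched, so the particle count enters only through the event $\{\sum_k\eta_0^{\mathfrak{B}}(\nu_k)>An^{1/\kappa}\}$, whose averaged probability is controlled by tightness via Corollary \ref{cor:Wnweakconv}, while the coupling error enters through $E_Q\bigl[\max_{|k|\le Ln}p_k(\omega)\bigr]$. Controlling this maximum is the genuinely hard step that your shift-invariance observation does not address (a maximum of order-$n$ terms, each small in mean, need not be small): the paper bounds each $p_k(\omega)$ by four terms that do not depend on $k$ — the uniform hitting-time coupling error \eqref{htcouple}, the event that $\tau_{\mathfrak{B}}([0,\nu_{\lfloor Ln\rfloor}))$ is at most $Sn^{1/\kappa}$, the single-walk estimate of Proposition \ref{XZcouple} run up to time $(T+S+\delta')n^{1/\kappa}$, and an increment bound for $Z_{\mathfrak{B}}$ over time shifts of size $2\delta'n^{1/\kappa}$, handled through Corollary \ref{cor:tBstable} and the continuity of the inverse stable subordinator. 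To repair your proposal you would need either this $k$-uniform decomposition or an explicit truncation of the large traps so that the $L^1$ computation you propose becomes legitimate; as written, the step ``$O(n^{1/\kappa})\cdot o(1)=o(n^{1/\kappa})$'' does not follow.
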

\begin{proof}
Since the initial configurations $\chi_0 = \eta_0^{\mathfrak{B}}$ are the same, we can match the $j$-th particles at $\nu_k$ in each system and then compare the systems at any later time by comparing the differences in these particles at this later time. That is, 
\begin{align*}
 \sum_{x \in \Z} \chi_{tn^{1/\k}}(x)\phi(x/n) - \sum_{k \in \Z} \eta_{t n^{1/\k}}^{\mathfrak{B}}(\nu_k) \phi(\nu_k/n) 
= \sum_{k \in \Z} \sum_{j=1}^{\eta_0^{\mathfrak{B}}(\nu_k)} \left\{ \phi\left( \frac{X_{tn^{1/\k}}^{\nu_k,j} }{n} \right) - \phi\left( \frac{Z_{\mathfrak{B}}^{k,j}(tn^{1/\k})}{n} \right) \right\}
\end{align*}
Now, let $\phi \in \mathcal{C}_0(\R_+ \times \R)$ and $\e>0$ be fixed and choose $A<\infty$ (we will let $A \ra \infty$ later). Since $\phi$ is uniformly continuous, there exists a $\d>0$ such that $|\phi(x,t) - \phi(y,t)| < \frac{\e}{2A}$ if $|x-y| < \d$ and $t\leq T$. Therefore, for this choice of $A$ and $\d$ we have that
\begin{align*}
 & P_{\w,\mathfrak{B}}^{\hat{\mu}_u^n,u_n}\left( \sup_{t\leq T} \left| \sum_{x \in \Z} \chi_{tn^{1/\k}}(x)\phi(t,x/n) - \sum_{k \in \Z} \eta_{t n^{1/\k}}^{\mathfrak{B}}(\nu_k) \phi(t,\nu_k/n) \right|  \geq \e n^{1/\k} \right)   \\
 &\leq  P_{\mathfrak{B}}^{u_n}\left( \sum_k \eta_0^{\mathfrak{B}}(\nu_k) >  A n^{1/\k}  \right) \\
 &\qquad  +  P_{\w,\mathfrak{B}}^{\hat{\mu}_u^n,u_n}\left(  \sum_{k \in \Z} \sum_{j=1}^{\eta_0^{\mathfrak{B}}(\nu_k)} \ind{ \sup_{t\leq T} | X_{tn^{1/\k}}^{\nu_k,j} - Z_{\mathfrak{B}}^{k,j}(t n^{1/\k})   | \geq \d n } \geq \frac{\e n^{1/\k} }{ 4 \|\phi\|_\infty}  , \, \sum_k \eta_0^{\mathfrak{B}}(\nu_k) \leq A n^{1/\k} \right) \\
 &\leq P_{\mathfrak{B}}^{u_n}\left( \sum_k \eta_0^{\mathfrak{B}}(\nu_k) >  A n^{1/\k}  \right) 
 + \frac{4 \|\phi\|_\infty A}{\e} \max_{|k|\leq Ln} P_{\w,\mathfrak{B}} \left( \sup_{t\leq T} \left| X_{tn^{1/\k}}^{\nu_k} - Z_{\mathfrak{B}}^{k}(t n^{1/\k}) \right| \geq \d n \right), 
\end{align*}
where $L<\infty$ is such that $\supp u \subset [-L,L]$ so that all particles are started at sites $\nu_k \in [-Ln,Ln]$. 
(The last inequality above is obtained by first conditioning on the initial configuration and applying Chebychev's inequality.)
We wish to show that the above terms are small when we take expectations with respect to the measure $Q$ and then let $n\ra\infty$.
To handle the first term, note that if the environment is such that $\sum_k u(\nu_k/n) \b_k \leq \frac{A n^{1/\k}}{2}$ then 
$\sum_k \eta_0^{\mathfrak{B}}(\nu_k)$ is stochastically dominated by a Poisson($\frac{A n^{1/\k}}{2}$) random variable under the measure $P_{\mathfrak{B}}^{u_n}$. 
From this, we can obtain that 
\begin{align*}
 \limsup_{n\ra\infty} E_Q\left[ P_{\mathfrak{B}}^{u_n}\left( \sum_k \eta_0^{\mathfrak{B}}(\nu_k) >  A n^{1/\k}  \right) \right] 
&\leq \lim_{n\ra\infty} Q\left( \sum_k u\left(\frac{\nu_k}{n}\right) \b_k > \frac{A n^{1/\k}}{2} \right)\\
&= \Pv\left( \int u(x) \s_W(dx) > \frac{A}{2} \right), 
\end{align*}
where the last equality follows from Corollary \ref{cor:Wnweakconv}. 
Since this can be made arbitrarily small by taking $A \ra\infty$, it remains only to show that 
\[
 \lim_{n\ra\infty} E_Q\left[ \max_{|k|\leq Ln} P_{\w,\mathfrak{B}} \left( \sup_{t\leq T} \left| X_{tn^{1/\k}}^{\nu_k} - Z_{\mathfrak{B}}^{k}(t n^{1/\k}) \right| \geq \d n \right) \right] = 0, \qquad \forall L<\infty, \d>0. 
\]
Of course, by the shift invariance of $Q$ with respect to the ladder locations this is equivalent to showing that  
\begin{equation}\label{EXZuc}
\lim_{n\ra\infty} E_Q\left[ \max_{k\in [0,Ln]} P_{\w,\mathfrak{B}} \left( \sup_{t\leq T} \left| X_{tn^{1/\k}}^{\nu_k} - Z_{\mathfrak{B}}^{k}(t n^{1/\k}) \right| \geq \d n \right) \right] = 0, \quad \forall L<\infty, \, \d>0. 
\end{equation}
To control the probabilities inside the expectation above, fix $\d'>0$ and $S<\infty$ and note that for any $k\in[0,Ln]$, 
\begin{align}
& P_{\w,\mathfrak{B}} \left( \sup_{t\leq T} \left| X_{tn^{1/\k}}^{\nu_k} - Z_{\mathfrak{B}}^{k}(t n^{1/\k}) \right| \geq \d n \right) \nonumber \\
&\qquad \leq P_{\w,\mathfrak{B}} \left( \sup_{t\leq T} \left| X_{T_{\nu_k}+tn^{1/\k}}^{0} - Z_{\mathfrak{B}}^{0}(\tau_{\mathfrak{B}}([0,\nu_k)) + t n^{1/\k}) \right| \geq \d n \right) \nonumber \\
&\qquad \leq P_{\w,\mathfrak{B}}\left( \max_{k \in [0,Ln]} \left| T_{\nu_k} - \tau_{\mathfrak{B}}([0,\nu_k)) \right| \geq \d' n^{1/\k} \right) + P_\w(\tau_{\mathfrak{B}}([0,\nu_{Ln})) \geq S n^{1/\k} ) \label{XZuc1}\\
&\qquad\qquad + P_{\w,\mathfrak{B}} \left( \sup_{t\leq T+S+\d'} \left| X_{tn^{1/\k}}^{0} - Z_{\mathfrak{B}}^{0}(t n^{1/\k}) \right| \geq \d n/2 \right) \label{XZuc2} \\
&\qquad\qquad + P_{\w,\mathfrak{B}} \left( \sup_{t \leq T+S}  Z_{\mathfrak{B}}^{0}((t+2\d') n^{1/\k}) - Z_{\mathfrak{B}}^{0}(t n^{1/\k}) \geq \d n/2 \right) \label{XZuc3}
\end{align}
Since all of the probabilities in \eqref{XZuc1}-\eqref{XZuc3} do not depend on $k \in [0,Ln]$, in order to prove \eqref{EXZuc} it will be sufficient to control each of the terms in \eqref{XZuc1}-\eqref{XZuc3} when first taking expectations with respect to the measure $Q$ on environments and then letting $n\ra\infty$. 
That the first term in \eqref{XZuc1} vanishes in this way is the content of \eqref{htcouple} above. 
Proposition \ref{XZcouple} shows that \eqref{XZuc2} vanishes when averaging over $Q$ and then letting $n\ra\infty$ for any $S+T+\d'<\infty$ and $\d>0$. 
Finally, for the second term in \eqref{XZuc1} and the term in \eqref{XZuc3}, note that Corollary \ref{cor:tBstable} implies that under the averaged measure $E_Q[P_\w(\cdot) ]$, the crossing time $n^{-1/\k}\tau_{\mathfrak{B}}([0,\nu_{Ln}))$ converges in distribution to a $\k$-stable random variable $Y_\k$ and $\{t\mapsto n^{-1}Z_{\mathfrak{B}}(t n^{1/\k})\}$ converges in distribution to the inverse of a $\k$-stable subordinator $\{t\mapsto \mathcal{Z}(t)\}$. 
Therefore, we can conclude that
\begin{align*}
& \limsup_{n\ra\infty}  E_Q\left[ \max_{k\in [0,Ln]} P_{\w,\mathfrak{B}} \left( \sup_{t\leq T} \left| X_{tn^{1/\k}}^{\nu_k} - Z_{\mathfrak{B}}^{k}(t n^{1/\k}) \right| \geq \d n \right) \right] \\
&\qquad \leq \Pv\left( Y_\k \geq S \right) + \Pv\left( \sup_{t \leq T+S}  \mathcal{Z}(t+2\d')  - \mathcal{Z}(t) \geq \d/2 \right) 
\end{align*}
The right-hand side vanishes as we first take $\d'\ra 0$ and then let $S\ra\infty$ (note that here we're using that $\mathcal{Z}$ is almost surely a continuous process). 
This completes the proof of \eqref{EXZuc} and thus also the proof of the lemma. 
\end{proof}

\section{Changing the initial configuration and the law on environments}\label{sec:couple2}

The previous section comes close to proving a hydrodynamic limit for the system of RWRE, but as noted above the system of RWRE studied in the previous section uses an initial configuration of particles that is concentrated on the ladder locations only and the environment $\w$ comes from the distribution $Q$ instead of $P$. In this section we remove these two difficulties. We will again use couplings of systems to be able to compare the behavior of two systems of RWRE. 

Some of the analysis in the current section requires a detailed analysis of the random environment. To this end we will introduce notation that will help simplify things. 
Recall that $\rho_x = \frac{1-\w_x}{\w_x}$ for any $x \in \Z$. Many formulas for quenched probabilities or expectations of interest involve sums of products of the $\rho_x$. To this end, we will let 
\begin{equation}\label{PiWRdef}
 \Pi_{i,j} = \prod_{x=i}^j \rho_x, \quad
W_j = \sum_{i=-\infty}^j \Pi_{i,j}, \quad \text{and}\quad R_i = \sum_{j=i}^\infty \Pi_{i,j}. 
\end{equation}
Since we are assuming that the environment $\w$ is i.i.d.\ with $E_P[\log \rho_0] < 0$, the infinite sums $W_j$ and $R_i$ converge almost surely. 
We will also need notation for the partial sums which converge to $W_j$ and $R_i$, and thus we will let
\begin{equation}\label{WRpartialdef}
 W_{\ell,j} = \sum_{i=\ell}^j \Pi_{i,j}, \quad \text{and} \quad R_{i,\ell} = \sum_{j=i}^\ell \Pi_{i,j}.
\end{equation}
Before proceeding to the analysis of the systems of RWRE in the rest of this section, we mention briefly two important places where this notation is useful. First of all, from the definition of the $g_\w(x)$ in \eqref{gdef} it is clear that 
\begin{equation}\label{gform}
 g_\w(x) = \frac{1}{\w_x}(1 + R_{x+1} ) = 1 + R_x + R_{x+1}, \quad \forall x \in \Z. 
\end{equation}
(For the second equality above we used that $\frac{1}{\w_x} = 1 + \rho_x$.)
Secondly, the quenched expectations for hitting times can be derived from the fact that $E_\w^x[T_{x+1}] = 1 + 2 W_x$ for all $x \in \Z$. 
In particular, we will use below that 
\begin{align}
 E_\w T_n = n + 2\sum_{j=0}^{n-1} W_j &= n + 2\sum_{j=0}^{n-1} \left( W_{0,j} + W_{-1}\Pi_{0,j} \right) \nonumber \\
&= n + 2\sum_{j=0}^{n-1} \sum_{i=0}^j \Pi_{i,j} + 2W_{-1}\sum_{j=0}^{n-1}\Pi_{0,j}. \label{EwTnform}
\end{align}

Another statistic of the environment that will be helpful in our analysis below is 
\[
 M_k = \sup \{ \Pi_{\nu_k, j} : \, \nu_k\leq j < \nu_{k+1} \} = \sup \left\{ e^{V_\w(x) - V_\w(\nu_k)}: \, x \in (\nu_k,\nu_{k+1}] \right\}, \qquad \forall k \in \Z. 
\]
That is, $M_k$ measures the maximal increase of the potential $V_\w$ of the environment between the ladder locations $\nu_k$ and $\nu_{k+1}$. 
Because $M_k$ only depends on the environment between successive ladder locations, it follows that the sequence $\{M_k\}_{k\in \Z}$ is i.i.d.\ under the measure $Q$. When the expected crossing time $\b_k$ between ladder locations is large, typically the main contribution comes from $M_k$ so that we can use $M_k$ at times as an i.i.d.\ approximation of the stationary sequence $\b_k$. Moreover, it will be important below that $M_k$ has similar tail decay as $\b_k$. In particular, 
it follows from \cite{iEVQ} that
there is a $C>0$ such that 
\begin{equation}\label{Mtail}
 Q(M_1 > x) \sim C x^{-\k}, \quad \text{as } x \ra\infty. 
\end{equation}

\subsection{Coupling different initial configurations}

We begin by showing that the system with particles started only at the ladder locations as in the previous section can be coupled with a system of particles with locally stationary initial configuration. 
Recall the definitions of the distributions $\mu_u^n$ and $\hat{\mu}_u^n$ on initial configurations in \eqref{rwlocstat} and \eqref{rwnukic} respectively. 
The following proposition allows us to compare the systems of independent RWRE with initial configurations $\mu_u^n$ and $\hat{\mu}_u^n$, respectively.

\begin{prop}\label{pr:Qiccoupling}
 Let Assumptions \ref{asmiid}--\ref{asmnl} hold with $\k \in (0,1)$ fixed, and let $u \in \mathcal{C}_0^+$. 
There exists a coupling $P_\w^{\mu_u^n(\w), \hat{\mu}_u^n(\w)}$ of two systems of particles $\{\chi_n\}_{n\geq 0}$ and $\{\hat\chi_n\}_{n\geq 0}$ such that the marginal distribution of 
$\{\chi_n\}_{n\geq 0}$ is $P_\w^{\mu_u^n(\w)}$ and the marginal distribution of $\{\hat{\chi}_n\}_{n\geq 0}$ is $P_\w^{\hat{\mu}_u^n}$, and such that for any $\phi \in \mathcal{C}_0(\R_+\times \R)$ and $T<\infty$,
\[
 \lim_{n\ra\infty} E_Q\left[ P_\w^{\mu_u^n(\w), \hat{\mu}_u^n(\w)} \left( \sup_{t\leq T} \left| \sum_{x\in \Z} ( \chi_{tn^{1/\k}}(x) - \hat\chi_{tn^{1/\k}}(x) ) \phi(t,x/n) \right| \geq \d n^{1/\k} \right) \right] = 0, \qquad \forall \d>0. 
\]
\end{prop}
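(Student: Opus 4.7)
The plan is to couple the two systems by matching particles via the directed-trap coupling of Proposition \ref{XZcouple} and to reduce the comparison to their quenched means. Because both $\mu_u^n$ and $\hat\mu_u^n$ are product Poisson with independent particles, arguments analogous to the proof of Lemma \ref{varlem} show that the scaled test-function integrals concentrate on their quenched expectations uniformly in $t \leq T$; it therefore suffices to show that
\[
\sup_{t\leq T}\, n^{-1/\k}\Bigl|\sum_y u(y/n)g_\w(y)E_\w^y[\phi(t,X_{tn^{1/\k}}/n)] - \sum_k u(\nu_k/n)\b_k E_\w^{\nu_k}[\phi(t,X_{tn^{1/\k}}/n)]\Bigr|\to 0
\]
in $Q$-probability.

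Next I would extend Proposition \ref{XZcouple} to arbitrary starting points $y$ (the case $y=\nu_k$ is handled directly by $Q$-stationarity under ladder shifts, and the case $y\in(\nu_{k-1},\nu_k)$ by a short hitting-time analysis to the nearest ladder location). Combined with the uniform continuity of $\phi$, this replaces each $E_\w^y[\phi(t,X_{tn^{1/\k}}/n)]$ by $\Ev[\phi(t,Z_{\mathfrak{B}}(tn^{1/\k};y)/n)]$ with total error $o(n^{1/\k})$ in $Q$-probability. Since $Z_{\mathfrak{B}}(\cdot;y)\equiv Z_{\mathfrak{B}}(\cdot;\nu_k)$ for every $y\in(\nu_{k-1},\nu_k]$, regrouping by the target ladder location reduces the first sum to $\sum_k u(\nu_k/n)\,G_k^n\,\Ev[\phi(t,Z_{\mathfrak{B}}(tn^{1/\k};\nu_k)/n)]$, where $G_k^n:=\sum_{y\in(\nu_{k-1},\nu_k]}g_\w(y)$, while the second sum becomes the same expression with $\b_k$ in place of $G_k^n$. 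Hence the task reduces to showing
\[
\sup_{t\leq T}\, n^{-1/\k}\Bigl|\sum_{k:\,|\nu_k|\leq Ln} u(\nu_k/n)\,(G_k^n-\b_k)\,\Ev[\phi(t,Z_{\mathfrak{B}}(tn^{1/\k};\nu_k)/n)]\Bigr|\to 0
\]
in $Q$-probability, with $L$ chosen so that $\supp u\subset[-L,L]$.

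The hard part will be this final step. Both $G_k^n$ and $\b_k$ carry $\k$-stable fluctuations of order $n^{1/\k}$ but depend on disjoint portions of the environment: $G_k^n$ involves the block $(\nu_{k-1},\nu_k]$ (with $R$-tails reaching further into future blocks through $g_\w(\nu_k)$), while $\b_k$ is determined by $[\nu_k,\nu_{k+1})$, so the dominant hill-height contributions to the two sequences are indexed differently. My plan is to truncate at level $An^{1/\k}$: by Corollary \ref{cor:Wnweakconv} together with \eqref{bnutails} and \eqref{Mtail}, only $O(1)$ indices with $|\nu_k|\leq Ln$ satisfy $\b_k\vee G_k^n>An^{1/\k}$, and for each such large trap I would align the dominant contribution of $G_{j+1}^n$ (which sees the block-$j$ hill through $g_\w(\nu_{j+1})$) with that of $\b_j$ via an index shift, absorbing the residual into the modulus of continuity $\Delta(u\cdot\Ev[\phi(t,Z_{\mathfrak{B}}(tn^{1/\k};\cdot)/n)];\bar\nu/n)$, which vanishes as $n\to\infty$ by continuity of $u$ and $\phi$ together with \eqref{unifnuk}. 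The small-trap contribution ($\b_j,G_j^n\leq An^{1/\k}$) is controlled by a Chebyshev-type estimate using truncated $\k$-moments, and tends to zero as $A\to\infty$ by Corollary \ref{cor:bstable}. Verifying the index-shift matching for the large traps is the most technical step, relying on the explicit ladder-block representations $g_\w(y)=1+R_y+R_{y+1}$ and $\b_k=(\nu_{k+1}-\nu_k)+2\sum_{y\in[\nu_k,\nu_{k+1})}W_y$ from \eqref{gform} and \eqref{EwTnform}, together with the domination of both $G_k^n$ and $\b_k$ by the local hill heights $M_j$.
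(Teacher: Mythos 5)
Your plan diverges from the paper's from the start. The paper introduces an intermediate system $\bar\chi$ whose initial Poisson intensity at $x$ is $\sum_k b_{x,k}u(\nu_k/n)$, built from the visit counts $b_{x,k}$, so that $\hat\chi_0(\nu_k)=\sum_x B^n_{x,k}$ and $\bar\chi_0(x)=\sum_k B^n_{x,k}$ can be generated from a common family of independent Poisson variables $B^n_{x,k}$; this yields a particle-by-particle matching indexed by $b_{x,k}$, not by the spatial nearest-neighbor map $y\mapsto k(y)$, and a pathwise coupling of each matched pair. You instead pass to quenched expectations and substitute the directed trap process for the RWRE started at each $y$.

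The substitution is where the argument breaks. You want to replace $E_\w^y[\phi(t,X_{tn^{1/\k}}/n)]$ by $\Ev[\phi(t,Z_{\mathfrak{B}}(tn^{1/\k};y)/n)]$, where $Z_{\mathfrak{B}}(0;y)=\nu_{k(y)}$, via a ``short hitting-time analysis to the nearest ladder location.'' But at the sites that carry the mass --- those $y$ lying at the base of a deep hill inside block $k(y)-1$, with $g_\w(y)\gtrsim n^{1/\k}$ --- the hitting time $T^y_{\nu_{k(y)}}$ is the \emph{escape time} from the trap, of order $\b_{k(y)-1}$, hence $\Theta(n^{1/\k})$. Since the directed trap traverses spatial distance of order $n$ in time of order $n^{1/\k}$, such a time lag is a \emph{spatial} discrepancy of order $n$: the RWRE $X^y$ is still caught in the trap while $Z_{\mathfrak{B}}(\cdot;\nu_{k(y)})$ has already started past it, so the two arguments of $\phi$ differ macroscopically. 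Since these $y$ carry essentially all of $\sum_y u(y/n)g_\w(y)=\Theta(n^{1/\k})$, the total substitution error is $\Theta(n^{1/\k})$, not $o(n^{1/\k})$. Equivalently, in your final comparison $\sum_k u(\nu_k/n)\psi_k(G^n_k-\b_k)$ with $\psi_k:=\Ev[\phi(t,Z_{\mathfrak{B}}(tn^{1/\k};\nu_k)/n)]$: a single deep trap in block $j$ contributes $\Theta(n^{1/\k})$ to $\b_j$, weighted by $\psi_j$, while the matching mass on the $G$-side sits predominantly in $G^n_{j+1}$ (through $g_\w(y)$ for $y$ inside block $j$) and is weighted by $\psi_{j+1}$; and $\psi_j-\psi_{j+1}=\Theta(1)$ exactly at a deep trap, because $Z_{\mathfrak{B}}(\cdot;\nu_j)$ dwells at $\nu_j$ for a macroscopic rescaled time whereas $Z_{\mathfrak{B}}(\cdot;\nu_{j+1})$ does not. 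The $b_{x,k}$-decomposition in the paper avoids this by assigning a site $y$ deep in block $j$ to the ladder point $\nu_j$ on its \emph{left}, and coupling by running the $\nu_j$-walker forward to $y$ (a short time) rather than waiting for the $y$-walker to escape (a long time). A secondary point: the concentration around the quenched mean you invoke as ``analogous to Lemma \ref{varlem}'' controls each fixed $t$ (or the time-integrated quantity), whereas the statement requires $\sup_{t\le T}$; this would need a separate maximal estimate, which the paper's pathwise coupling renders unnecessary.
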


The proof of Proposition \ref{pr:Qiccoupling} is most easily accomplished via yet another intermediate coupling. We will consider a system of independent RWRE $\bar{\chi}_n$ that is constructed by taking the initial configuration $\hat\chi_0$ and ``spreading out'' the particles so that there are particles started at every $x \in \Z$. In this way every particle in each system will be matched with a corresponding particle in the other system and the difficulty will be in showing that for most of the matched pairs of particles, the random walks can be coupled so that the distance of particles is not too far apart as the random walks evolve. The next step will be to give a coupling of the system of random walks $\bar\chi_n$ with the system $\chi_n$ that has the locally stationary initial configuration. In this step particles in the two systems will be matched to particles in the other system but starting at the same site $x\in\Z$. In this way, any two matched particles can be perfectly coupled for all time, but 
the 
difficulty arises in that the initial configurations are slightly different and so it needs to be shown that the number of unmatched particles between the two systems is not too large. 

In order to introduce the intermediate system $\bar\chi_n$ and describe the couplings of the systems of RWRE we first need to introduce some notation. 
For any $x,k \in \Z$ with $x < \nu_{k+1}$ let $b_{x,k} = b_{x,k}(\w)$ be defined by 
\[
 b_{x,k} = b_{x,k}(\w) = E_\w^{\nu_\k} \left[ \sum_{n=0}^{T_{\nu_{k+1}}-1} \ind{X_n = x} \right]. 
\]
The utility of the parameters $b_{x,k}$ will be that they will allow us to connect the parameters $\b_k$ and $g_\w(x)$ that are used in the definitions of the initial configurations $\mu_u^n(\w)$ and $\hat{\mu}_u^n(\w)$. In particular, it is easy to see that 
\begin{equation}\label{bxksums}
 \b_k = \sum_{x: \, x < \nu_{k+1}} b_{x,k}
\qquad\text{and}\qquad 
 g_\w(x) = \sum_{k: \, x < \nu_{k+1} } b_{x,k}.
\end{equation}
With this notation we can define the distribution that will be used for the initial configuration of the system $\bar\chi_0$. For $\w\in \Omega$ and $u\in \mathcal{C}_0$ fixed, let 
\[
 \pi_u^n(\w) = \bigotimes_{ x \in \Z } \text{Poisson}\left( \sum_{k:\, x < \nu_{k+1}} b_{x,k} u( \tfrac{\nu_k}{n} ) \right).
\]

Having introduced the necessary notation, we are now ready to approach the proof of Proposition \ref{pr:Qiccoupling}. 
To make the proof easiest to follow, we will state the intermediate couplings as two separate lemmas and then give the proofs of these lemmas. Obviously Proposition \ref{pr:Qiccoupling} follows easily from these two lemmas. 

\begin{lem}\label{lem:rwcouple1}
Let Assumptions \ref{asmiid}--\ref{asmnl} hold with $\k \in (0,1)$ fixed, and let $u \in \mathcal{C}_0^+$.
There exists a coupling $P_\w^{\hat{\mu}_u^n(\w), \pi_u^n(\w)}$ of two systems of particles $\{\hat\chi_n\}_{n\geq 0}$ and $\{\bar\chi_n \}_{n\geq 0}$ 
with marginal distributions $P_\w^{\hat{\mu}_u^n(\w)}$ and $P_\w^{\pi_u^n(\w)}$, respectively, and such that
for any $\phi \in \mathcal{C}_0(\R_+\times \R)$ and $T<\infty$,
\[
 \lim_{n\ra\infty} E_Q\left[ P_\w^{\hat{\mu}_u^n(\w), \pi_u^n(\w)} \left( \sup_{t\leq T} \left| \sum_{x\in \Z} ( \hat\chi_{tn^{1/\k}}(x) - \bar\chi_{tn^{1/\k}}(x) ) \phi(t,x/n) \right| \geq \d n^{1/\k} \right) \right] = 0, \qquad \forall \d>0. 
\]
\end{lem}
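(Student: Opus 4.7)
The plan is to construct a pairwise coupling of the particles in the two systems via Poisson decomposition. By Poisson superposition, the initial configuration $\bar\chi_0\sim\pi_u^n(\w)$ splits as $\bar\chi_0(y)=\sum_{k\in\Z}\bar\chi_0^{(k)}(y)$ with $\bar\chi_0^{(k)}(y)\sim\textnormal{Poisson}(b_{y,k}u(\nu_k/n))$ independently in $y$ and $k$, and since $\sum_y b_{y,k}=\b_k$ the type-$k$ total $N_k:=\sum_y\bar\chi_0^{(k)}(y)$ is $\textnormal{Poisson}(u(\nu_k/n)\b_k)$, matching the distribution of $\hat\chi_0(\nu_k)$. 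I would couple $N_k=\hat\chi_0(\nu_k)$, and within each $k$ pair the $\hat\chi$-particles at $\nu_k$ bijectively with the $\bar\chi$-particles of type $k$, whose starting positions are i.i.d.\ draws from the probability measure $b_{\cdot,k}/\b_k$. The trajectories of each matched pair are coupled through the common environment $\w$ so that the two walks evolve under a shared source of randomness until they collide at a common site, from which point they follow identical trajectories.

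Under this coupling the discrepancy in the test statistic reads
\[
\sum_x(\hat\chi_{tn^{1/\k}}(x)-\bar\chi_{tn^{1/\k}}(x))\phi(t,x/n)=\sum_{k\in\Z}\sum_{j=1}^{N_k}\bigl[\phi(t,\hat X^{(k,j)}_{tn^{1/\k}}/n)-\phi(t,\bar X^{(k,j)}_{tn^{1/\k}}/n)\bigr],
\]
and I would split the $k$-sum by whether $\b_k\le\e n^{1/\k}$ (``small trap'') or $\b_k>\e n^{1/\k}$ (``large trap''), and further truncate to $|\nu_k|\le Ln$ where $\supp u\subset[-L,L]$. For small traps, the hitting-time coupling \eqref{htcouple} and Proposition \ref{XZcouple}, applied separately to each walk of the pair (using shift-invariance of $Q$ with respect to the ladder locations), allow us to compare both $\hat X^{(k,j)}$ and $\bar X^{(k,j)}$ to the same directed trap process up to a small relative time lag; together with the block-width bound $\nu_{k+1}-\nu_k=\mathcal{O}(\log n)$ from \eqref{bnutails} and the uniform continuity of $\phi$, this yields a per-pair contribution of $o(1)$. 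Since the total number of paired particles $\sum_k u(\nu_k/n)\b_k$ is $\mathcal{O}_Q(n^{1/\k})$ by Corollary \ref{cor:bstable}, the aggregate small-trap contribution is $o_Q(n^{1/\k})$. For large traps, by the Poisson limit in Corollary \ref{cor:Wnweakconv} only $\mathcal{O}_Q(1)$ traps with $\b_k>\e n^{1/\k}$ lie in $[-Ln,Ln]$; each matched pair is either still confined to the $k$-th block (of width $\mathcal{O}(\log n)$, so the $\phi$-difference is $o(1)$) or has escaped with probability at most a constant multiple of $tn^{1/\k}/\b_k$, contributing at most $\mathcal{O}(1)$. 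A tightness argument in $\e\to 0$ (using the $\k$-stability of the truncation error in Corollary \ref{cor:bstable}) makes the total large-trap contribution negligible.

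The main obstacle is establishing the in-block coupling estimate in the small-trap regime: two independent RWRE in the same environment, starting at points within the same ladder block, must be shown to satisfy $|\hat X^{(k,j)}_{tn^{1/\k}}-\bar X^{(k,j)}_{tn^{1/\k}}|=o(n)$ with high $Q$-probability uniformly over the $\mathcal{O}(n)$ possible starting ladder locations. I would reduce this to a directed-trap comparison by invoking Proposition \ref{XZcouple} for each walk, after which the relative spatial displacement is controlled by differences of the coupled directed trap process over time increments tending to zero—these vanish by the $D^U_{\R_+}$ continuity of the limiting inverse $\k$-stable subordinator in Corollary \ref{cor:tBstable}. The ladder-block tail estimate $Q(\nu_1>x)\le C_2 e^{-C_3 x}$ from \eqref{bnutails} then supplies the $\mathcal{O}(\log n)$ block-width bound uniformly in $|k|\le Ln/\bar\nu$ needed to close the argument.
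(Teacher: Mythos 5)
Your Poisson decomposition $\bar\chi_0(x)=\sum_k B^n_{x,k}$, $\hat\chi_0(\nu_k)=\sum_x B^n_{x,k}$ with $B^n_{x,k}\sim\text{Poisson}(b_{x,k}u(\nu_k/n))$ and the pairwise matching of particles is exactly the paper's construction, and splitting the $k$-sum into ``small'' and ``large'' traps is also the right overall shape. However, there are two genuine gaps that the high-level plan does not close.

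First, the step you flag as ``the main obstacle'' is indeed where the real work lies, and the resolution you propose does not go through. You want to invoke Proposition~\ref{XZcouple} ``for each walk'' and then appeal to the $D^U_{\R_+}$-continuity of the limiting inverse $\k$-stable subordinator. But Proposition~\ref{XZcouple} couples a \emph{single} RWRE with a directed trap process; applying it separately to $\hat X$ (started at $\nu_k$) and $\bar X$ (started at $x\ne\nu_k$) produces two trap-process approximations that are offset by the hitting time $T^x_{\nu_k}$, and controlling this offset uniformly over $|k|\le Ln$ and over the $x$ with $b_{x,k}>0$ is precisely the hard part. In fact $b_{x,k}$ can be nonzero for $x$ many blocks to the left of $\nu_k$, so $T^x_{\nu_k}$ need not be small even for a ``small'' trap. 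The paper's proof resolves this by constructing, for each large-$M_k$ trap, an interval $[a^n_k,c^n_k]\ni\nu_k$ (with $a^n_k=\nu_{k-\lceil K\log n\rceil}$ and $c^n_k$ defined by the first place $\Pi_{\nu_k,\cdot}$ exceeds $n^\gamma$), and then proving \eqref{acprop1}--\eqref{acprop3}: the total mass of pairs with $x\notin[a^n_k,c^n_k]$ is $o(n^{1/\k})$, while for $x\in[a^n_k,c^n_k]$ the hitting time $T^{a^n_k}_{c^n_k}$ has expectation at most $n^{1-2\e}$. Establishing \eqref{acprop3} requires the potential analysis with $M^-$, $M^+$ and \cite[Lemma~4.1]{p1LSL2} (see \eqref{Mpmbounds}), and \eqref{acprop1} relies on \cite[Proposition~3]{estzWQL}. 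None of this is implied by the directed-trap convergence in Corollary~\ref{cor:tBstable}.

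Second, a few subsidiary issues. The paper classifies traps by $M_k>n^{1/\k}/\log n$ rather than $\b_k>\e n^{1/\k}$ as you do; $\{M_k\}$ is i.i.d.\ under $Q$, and the key small-trap input \eqref{betasmallsum} from \cite{estzWQL} is stated in terms of $M_k$. Your large-trap argument also has an error: you claim each pair has escaped with probability ``at most a constant multiple of $tn^{1/\k}/\b_k$, contributing at most $\mathcal{O}(1)$'', but for $\b_k\asymp\e n^{1/\k}$ this ratio is $\Theta(1)$, so escape is not rare, and the bound does not suppress the contribution in any useful way. Finally, your ``collide at a common site'' trajectory coupling is different from the paper's (where the trailing walk runs independently until it reaches the leading walk's \emph{starting site} and then traces its path); the paper's version is what yields the clean bound $\sup_n|\hat X_n-\bar X_n|\le T^x_{\nu_k}$ in \eqref{hXbXcouple} and hence the reduction to controlling $E_\w^{a^n_k}[T_{c^n_k}]$.
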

\begin{lem}\label{lem:rwcouple2}
Let Assumptions \ref{asmiid}--\ref{asmnl} hold with $\k \in (0,1)$ fixed, and let $u \in \mathcal{C}_0^+$.
There exists a coupling $P_\w^{\mu_u^n(\w), \pi_u^n(\w)}$ of two systems of particles $\{\chi_n\}_{n\geq 0}$ and $\{\bar\chi_n \}_{n\geq 0}$ 
with marginal distributions $P_\w^{\mu_u^n(\w)}$ and $P_\w^{\pi_u^n(\w)}$, respectively, and such that
for any $\phi \in \mathcal{C}_0(\R_+\times \R)$ and $T<\infty$,
\[
 \lim_{n\ra\infty} E_Q\left[ P_\w^{\mu_u^n(\w), \pi_u^n(\w)} \left( \sup_{t\leq T} \left| \sum_{x\in \Z} ( \chi_{tn^{1/\k}}(x) - \bar\chi_{tn^{1/\k}}(x) ) \phi(t,x/n) \right| \geq \d n^{1/\k} \right) \right] = 0, \qquad \forall \d>0. 
\]
\end{lem}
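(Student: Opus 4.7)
The plan is to couple the two systems by a standard ``common part plus extras'' decomposition at each site, show that the common part cancels exactly in the difference, and then reduce the lemma to an $L^1$-type estimate on the two mean profiles. For each $x \in \Z$ let $a_x = u(x/n) g_\w(x)$ and $b_x = \sum_{k: x < \nu_{k+1}} b_{x,k} u(\nu_k/n)$ be the respective Poisson means of $\chi_0(x)$ under $\mu_u^n(\w)$ and of $\bar\chi_0(x)$ under $\pi_u^n(\w)$. At each site $x$, independently draw $N_x \sim \mathrm{Poisson}(\min(a_x,b_x))$ common particles and $E_x \sim \mathrm{Poisson}(|a_x - b_x|)$ extra particles, and place the extras in whichever of the two systems has the larger mean at $x$. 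The $N_x$ common particles at each site are assigned identical random walk trajectories in the two systems, while each extra particle follows its own independent random walk in $\w$. Thinning and superposition of Poisson point processes yield the correct marginals $P_\w^{\mu_u^n(\w)}$ and $P_\w^{\pi_u^n(\w)}$. Since the common particles are perfectly coupled, they cancel exactly in the signed difference $\chi_{tn^{1/\k}}(y) - \bar\chi_{tn^{1/\k}}(y)$; each extra particle contributes at most $\pm 1$ at a single site, and the total number of extras $N = \sum_x E_x$ is preserved by the dynamics. This gives the uniform-in-time bound
\[
\sup_{t\leq T}\left| \sum_{x\in\Z} (\chi_{tn^{1/\k}}(x) - \bar\chi_{tn^{1/\k}}(x))\phi(t,x/n) \right| \leq \|\phi\|_\infty N.
\]
Since $N$ is $\mathrm{Poisson}(\sum_x |a_x - b_x|)$, Markov's inequality reduces the lemma to showing that $n^{-1/\k}\sum_x |a_x - b_x| \to 0$ in $Q$-probability.

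To bound this sum I would invoke the identity $g_\w(x) = \sum_{k: x < \nu_{k+1}} b_{x,k}$ from \eqref{bxksums} to write $a_x - b_x = \sum_{k: x < \nu_{k+1}} b_{x,k}(u(x/n) - u(\nu_k/n))$. Exchanging the order of summation yields
\[
\sum_x |a_x - b_x| \leq \sum_k \sum_{x < \nu_{k+1}} b_{x,k}\, |u(x/n) - u(\nu_k/n)|.
\]
Split the inner sum according to whether $x \in [\nu_k,\nu_{k+1})$ or $x < \nu_k$. For $x \in [\nu_k, \nu_{k+1})$ the bound $|u(x/n) - u(\nu_k/n)| \leq \Delta(u;(\nu_{k+1}-\nu_k)/n)$ combined with $\sum_{x \in [\nu_k,\nu_{k+1})} b_{x,k} \leq \b_k$ gives a per-block contribution of $\Delta(u;(\nu_{k+1}-\nu_k)/n)\,\b_k$. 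Since $u$ has compact support only $k$ with $|\nu_k|/n$ in a fixed bounded set contribute. Using the exponential tail of $\nu_1$ from \eqref{bnutails} to ensure $\max_{|\nu_k|\leq 2Ln}(\nu_{k+1}-\nu_k) = O(\log n)$ in $Q$-probability, together with Corollary \ref{cor:bstable} to control the partial sum $\sum_{|\nu_k|\leq 2Ln}\b_k$ at scale $n^{1/\k}$, and the uniform continuity of $u$ to force $\Delta(u; O(\log n)/n) \to 0$, should give an $o(n^{1/\k})$ bound for this forward piece.

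The remaining backtracking term $\sum_k \sum_{x < \nu_k} b_{x,k}\,|u(x/n) - u(\nu_k/n)|$ is where I expect the main technical obstacle, since $x$ can in principle be arbitrarily far to the left of $\nu_k$ and the modulus factor need not be small. The plan is to exploit the ladder structure: by construction $V_\w(\nu_k) < V_\w(y)$ for every $y < \nu_k$, so from $\nu_k$ the walker has geometrically suppressed excursions into the left half-line, and the backtracking occupation mass $\sum_{x < \nu_k} b_{x,k}$ can be expressed in terms of the partial sums $W_{\ell,\nu_k-1}$ introduced in \eqref{WRpartialdef}, whose tails under $Q$ are manageable via the bound $E_P[\rho_0^{\k'}] < 1$ for $\k' < \k$. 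One can then split the backtracking at $|x-\nu_k| = C\log n$: on the near part apply $\Delta(u;C\log n/n) \to 0$ combined with the trivial bound $\sum_{x < \nu_k} b_{x,k} \leq \b_k$; on the far part use $|u(x/n)-u(\nu_k/n)| \leq 2\|u\|_\infty$ combined with the exponentially small expected occupation of sites more than $C\log n$ to the left of $\nu_k$. Making the constants in this backtracking estimate quantitative enough to beat the $n^{1/\k}$ denominator once summed over the relevant $k$ and averaged against $Q$ is the bulk of the technical work.
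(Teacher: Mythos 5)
Your reduction is exactly the paper's: couple the Poisson initial conditions so that $|\chi_0(x)-\bar\chi_0(x)|$ are independent $\mathrm{Poisson}(|a_x-b_x|)$, observe that matched particles cancel forever and that the total unmatched count $N=\sum_x|\chi_0(x)-\bar\chi_0(x)|\sim \mathrm{Poisson}(\sum_x|a_x-b_x|)$ is conserved, apply Markov, and rewrite $\sum_x|a_x-b_x|\leq\sum_k\sum_{x<\nu_{k+1}}b_{x,k}\,|u(x/n)-u(\nu_k/n)|$ via \eqref{bxksums}. Your treatment of the forward piece ($x\in[\nu_k,\nu_{k+1})$) and of the ``near'' backtracking piece ($\nu_k-C\log n\leq x<\nu_k$) is correct and matches the paper's near-part estimate $\Delta(u;\cdot)\cdot\sum_{|k|\leq Ln}\b_k$, using Corollary \ref{cor:Wnweakconv}.

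There are two gaps. The first is small but real: your claim that ``only $k$ with $|\nu_k|/n$ in a fixed bounded set contribute'' is false for the backtracking part, since a particle started at $\nu_k$ with $\nu_k>Ln$ can backtrack into $\supp u\subset[-L,L]$, making $b_{x,k}|u(x/n)|$ nonzero for $x\leq Ln$ and $k>Ln$. The paper handles this via \eqref{bxksumsep}, which identifies $\sum_{k>Ln}\sum_{x\leq Ln}b_{x,k}$ with the total leftward occupation of a single walker and shows it is $Q$-a.s.\ finite; this term is then trivially $o(n^{1/\k})$.

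The larger gap is in the ``far'' backtracking estimate, which you correctly flag as the main obstacle but leave as a plan that will not close as stated. Bounding $\sum_{|k|\leq Ln}\sum_{x<\nu_k-C\log n}b_{x,k}$ by generic ``exponential suppression'' is not enough, because the exponential rate is a random functional of the ladder heights, and blocks with $\b_k$ of order $n^{1/\k}$ must be handled differently from typical blocks. The paper's proof makes two additional moves that your sketch does not: first, it splits by the trap depth $M_k$ and disposes of all blocks with $M_k\leq n^{1/\k}/\log n$ at once via \eqref{betasmallsum}, since for those the \emph{entire} block mass $\b_k$ is already negligible (no backtracking estimate needed); second, for the (at most $O_P(\log n)$) blocks with $M_k>n^{1/\k}/\log n$, it invokes \eqref{acprop1}--\eqref{acprop2} from the proof of Lemma \ref{lem:rwcouple1}, which rest on \cite[Proposition 3]{estzWQL} and on the specific choice $a_k^n=\nu_{k-\lceil K\log n\rceil}$ with $K>1/(\k\bar\nu E_P[-\log\rho_0])$. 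The exponential tail of $\nu_1$ is then used only to show that $|x-\nu_k|>n^{1-\e}$ forces $x\notin[a_k^n,c_k^n]$ with high probability. Your invocation of bounded moments of $W_{\ell,\nu_k-1}$ via $E_P[\rho_0^{\k'}]<1$ is pointing at the same mechanism underlying \cite[Proposition 3]{estzWQL}, but without the $M_k$ dichotomy you cannot separate the many typical blocks (handled by \eqref{betasmallsum}) from the few deep traps (handled by \eqref{acprop1}), and the estimate will not come out uniformly $o(n^{1/\k})$ in $Q$-probability.
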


\begin{proof}[Proof of Lemma \ref{lem:rwcouple1}]
 We begin by coupling the initial configurations of particles $\hat\chi_0$ and $\bar\chi_0$. 
Given the environment $\w$, let $\{B_{x,k}^n\}_{(x,k):\, x < \nu_{k+1}}$ be a family of independent Poisson random variables with $B_{x,k}^n \sim \text{Poisson}(b_{x,k} u(\frac{\nu_k}{n}))$. 
Using the first equality in \eqref{bxksums}, it is easy to see that we can construct the initial configurations $\hat\chi_0$ and $\bar\chi_0$ by letting
\[
 \hat\chi_0(\nu_k) = \sum_{x: \, x < \nu_{k+1}} B_{x,k}^n, 
\quad\text{and}\quad
\bar\chi_0(x) = \sum_{k: \, x < \nu_{k+1} } B_{x,k}^n. 
\]
Given this coupling of the initial configurations, for each pair $(x,k)$ with $x<\nu_{k+1}$ we can couple $B_{x,k}^n$ pairs of particles started at $x$ and $\nu_k$. That is, for each such pair $(x,k)$ and any $j\leq B_{x,k}^n$ we will let $\bar X_\cdot^{(x,k),j} $ and $\hat X_\cdot^{(x,k),j} $ be two random walks started at $x$ and $\nu_k$, respectively. 
We will couple these walks in the following simple manner. If $x \leq \nu_k$ then the walk $\bar X_\cdot^{(x,k),j}$ will evolve independently until reaching $\nu_k$, at which point it will trace the path of the walk $\hat X_\cdot^{(x,k),j}$. Conversely, if $\nu_k < x$ then the walk $\hat X_\cdot^{(x,k),j}$ will evolve independently until reaching $x$, at which point it will trace the path of the walk $\bar X_\cdot^{(x,k),j}$. 
Note that it is clear from this coupling procedure that 
\begin{equation}\label{hXbXcouple}
\begin{split}
 \sup_n \left|\hat X_n^{(x,k),j} - \bar X_n^{(x,k),j} \right|
&\leq
\begin{cases}
 \inf\left\{i\geq 0: \bar X_i^{(x,k),j} = \nu_k \right\} & \text{if } x \leq \nu_k \\
 \inf\left\{i\geq 0: \hat X_i^{(x,k),j} = x  \right\} & \text{if }  x > \nu_k
\end{cases}\\
&\overset{\text{Law}}{=}
\begin{cases}
 T_{\nu_k}^x &  \text{if } x \leq \nu_k \\
 T_x^{\nu_k} &  \text{if } x > \nu_k, 
\end{cases}
\end{split}
\end{equation}
where in the last line we use the notation $T^x_y$ for the first hitting time of a site $y \in \Z$ by a RWRE started at $x \in \Z$.

Having constructed the coupling of the systems of RWRE, we have that 
\[
 \sum_{x \in \Z} (\hat\chi_{tn^{1/\k}}(x) - \bar\chi_{tn^{1/\k}}(x) ) \phi(t,\tfrac{x}{n}) = 
\sum_{|k|\leq Ln} \sum_{x<\nu_{k+1}} \sum_{j=1}^{B_{x,k}^n} \left( \phi\left(t,\frac{\hat X_{tn^{1/\k}}^{(x,k),j}}{n} \right) - \phi\left(t,\frac{\bar X_{tn^{1/\k}}^{(x,k),j}}{n} \right) \right),
\]
where on the right side we can restrict the first sum to $|k|\leq Ln$ since $\supp u \subset [-L,L]$ implies that $B_{x,k}^n = 0$ if $|k|>Ln$.
Next, we claim that in the hydrodynamic limit scaling, we can ignore all the coupled pairs $(x,k)$ with $k$ such that $M_k \leq n^{1/\k}/\log n$. To see this, note that it was shown in the proof of Proposition 4 in \cite{estzWQL} that 
\begin{equation} \label{betasmallsum}
 \lim_{n\ra\infty} \frac{1}{n^{1/\k}} \sum_{|k|\leq Ln} \b_k \ind{M_k \leq \frac{n^{1/\k}}{\log n} } = 0, \quad \text{in $Q$-probability}. 
\end{equation}
Since $B_{x,k}^n$ is Poisson with mean at most $\|u\|_\infty b_{x,k}$, it follows easily from \eqref{bxksums} and \eqref{betasmallsum} that
\begin{equation}\label{Bsmallsum}
 E_Q\left[ P_\w^{\hat\mu_u^n(\w),\pi_u^n(\w)} \left( \sum_{|k|\leq Ln, \, M_k \leq \frac{n^{1/\k}}{\log n}} \sum_{x < \nu_{k+1}} B_{x,k}^n \geq \d n^{1/\k} \right) \right] = 0, \quad \forall \d>0. 
\end{equation}
Therefore, to finish the proof of the lemma, it will be enough to show that 
\begin{equation}\label{bigMcoupling}
 \lim_{n\ra\infty} E_Q\left[ P_\w^{\hat\mu_u^n,\pi_u^n} \left( \sum_{\substack{|k|\leq Ln\\M_k > \frac{n^{1/\k}}{\log n}}} \sum_{x < \nu_{k+1}} \sum_{j=1}^{B_{x,k}^n} \sup_{t\leq T} \left|   \phi\left(t,\frac{\hat X_{tn^{1/\k}}^{(x,k),j}}{n} \right) - \phi\left(t,\frac{\bar X_{tn^{1/\k}}^{(x,k),j}}{n} \right)  \right| \geq \d n^{1/\k} \right) \right] = 0, 
\end{equation}
for all $\d>0$. 
We will prove \eqref{bigMcoupling} by showing the following. 
\begin{itemize}
 \item If $M_k \geq \frac{n^{1/\k}}{\log n}$ and $x$ is close enough to $\nu_k$, then we will be able to couple the two walks so that the difference is not very large. 
 \item There are relatively few coupled particles started at pairs $x<\nu_{k+1}$ that are not close enough to have a good coupling. 
\end{itemize}
In order to make this precise, we will need to identify an interval around each $\nu_k$ where the coupling works well. In particular, for all $k\in\Z$ with $M_k\geq \frac{n^{1/\k}}{\log n}$ we will identify intervals $[a_k^n, c_k^n]$ around the respective ladder locations $\nu_k$ with the following properties. 
\begin{equation}\label{acprop1}
 \lim_{n\ra\infty} \frac{1}{n^{1/\k}} \sum_{|k|\leq Ln} \sum_{x < a_k^n} b_{x,k} \ind{M_k > \frac{n^{1/\k}}{\log n}} = 0 , \quad \text{in $Q$-probability}, 
\end{equation}
\begin{equation}\label{acprop2}
 \lim_{n\ra\infty} \frac{1}{n^{1/\k}} \sum_{|k|\leq Ln} \sum_{c_k^n < x < \nu_{k+1}} b_{x,k} \ind{M_k > \frac{n^{1/\k}}{\log n}} = 0 , \quad \text{in $Q$-probability}, 
\end{equation}
and for some $\e>0$, 
\begin{equation}\label{acprop3}
 \lim_{n\ra\infty} Q\left( \exists |k|\leq Ln: M_k \geq \frac{n^{1/\k}}{\log n}, \,  E_\w^{a_k^n}[ T_{c_k^n} ] > n^{1-2\e} \right) = 0. 
\end{equation}
Before defining these intervals properly, we first show how properties \eqref{acprop1}-\eqref{acprop3} allow us to prove \eqref{bigMcoupling}. 
First of all, in a similar manner as \eqref{betasmallsum} was used to prove \eqref{Bsmallsum} we can use \eqref{acprop1} and \eqref{acprop2} to show 
\[
 \lim_{n\ra\infty} E_Q\left[ P_\w^{\hat\mu_u^n,\pi_u^n} \left( \sum_{\substack{|k|\leq Ln \\ M_k > \frac{n^{1/\k}}{\log n}}} \sum_{\substack{x < \nu_{k+1} \\ x \notin [a_k^n,c_k^n]}} B_{x,k}^n \geq \d n^{1/\k}   \right) \right] = 0, \quad \forall \d > 0. 
\]
That is, the total number of pairs of particles corresponding to $(x,k)$ with $x \notin [a_k^n,c_k^n]$ is negligible in the hydrodynamic limit. Thus, to prove \eqref{bigMcoupling} we need only to control the coupling of the walks when $x \in [a_k^n,c_k^n]$ and this will be accomplished using \eqref{acprop3}. 
To this end, let $\{\s_{x,k}^j \}_{x,k,j}$ be a family of independent random variables for $k$ such that $M_k > n^{1/\k}/\log n$, $x \in [a_k^n,c_k^n]$ and $j\leq B_{x,k}^n$ with $\s_{x,k}^j$ having the same distribution as $T^{a_k^n}_{c_k^n}$ for every $j\leq B_{x,k}^n$. Due to \eqref{hXbXcouple} we can couple the $\s_{x,k}^j$ so that $\sup_n |\hat X_n^{(x,k),j} - \bar X_n^{(x,k),j}| \leq \s_{x,k}^j$. 
Therefore, 
\[
\sum_{\substack{|k|\leq Ln\\M_k > \frac{n^{1/\k}}{\log n}}} \sum_{x \in [a_k^n,c_k^n] } \sum_{j=1}^{B_{x,k}^n}
\sup_{t\leq T} \left|   \phi\left(t,\frac{\hat X_{tn^{1/\k}}^{(x,k),j}}{n} \right) - \phi\left(t,\frac{\bar X_{tn^{1/\k}}^{(x,k),j}}{n} \right)  \right|
\leq 
\sum_{\substack{|k|\leq Ln\\M_k > \frac{n^{1/\k}}{\log n}}} \sum_{x \in [a_k^n,c_k^n] } \sum_{j=1}^{B_{x,k}^n}
\Delta\left( \phi; \frac{\s_{x,k}^j}{n}  \right),
\]
from which we can conclude that 
\begin{align}
 & P_\w^{\hat\mu_u^n,\pi_u^n} \left( \sum_{\substack{|k|\leq Ln\\M_k > \frac{n^{1/\k}}{\log n}}} \sum_{x \in [a_k^n,c_k^n] } \sum_{j=1}^{B_{x,k}^n} \sup_{t\leq T} \left|   \phi\left(t,\frac{\hat X_{tn^{1/\k}}^{(x,k),j}}{n} \right) - \phi\left(t,\frac{\bar X_{tn^{1/\k}}^{(x,k),j}}{n} \right)  \right| \geq \d n^{1/\k} \right) \nonumber \\
&\leq P_\w^{\hat\mu_u^n,\pi_u^n} \left( \sum_{\substack{|k|\leq Ln\\M_k > \frac{n^{1/\k}}{\log n}}} \sum_{x \in [a_k^n,c_k^n] } B_{x,k}^n \Delta(\phi;n^{-\e})  \geq  \frac{\d n^{1/\k}}{2} \right) \nonumber \\
&\qquad + P_\w^{\hat\mu_u^n,\pi_u^n} \left( \sum_{\substack{|k|\leq Ln \\ M_k > \frac{n^{1/\k}}{\log n} }} \sum_{x \in [a_k^n,c_k^n]} \sum_{j=1}^{B_{x,k}^n} \ind{\s_{x,k}^j > n^{1-\e} } \geq \frac{\d n^{1/\k}}{4\|\phi\|_\infty} \right) \nonumber \\
&\leq \frac{2 \Delta(\phi;n^{-\e}) \|u\|_\infty}{\d n^{1/\k}} \sum_{|k|\leq Ln} \b_k   + \frac{4\|\phi\|_\infty \|u\|_\infty }{\d n^{1/\k}}\sum_{\substack{|k|\leq Ln \\ M_k > \frac{n^{1/\k}}{\log n} }} \sum_{x \in [a_k^n,c_k^n]} b_{x,k} P_\w\left( T^{a_k^n}_{c_k^n} \geq n^{1-\e} \right). \label{hXbXcouplebound}
\end{align}
For $k$ with $M_k > n^{1/\k}/\log n$, we have that Chebychev's inequality and \eqref{bxksums} imply that
\[
 \sum_{x \in [a_k^n,c_k^n]} b_{x,k} P_\w\left(T^{a_k^n}_{c_k^n} \geq n^{1-\e} \right)
\leq \frac{1}{n^{1-\e}} \sum_{x \in [a_k^n,c_k^n]} b_{x,k} E_\w^{a_k^n}[ T_{c_k^n} ] 
\leq \frac{E_\w^{a_k^n}[ T_{c_k^n} ]}{n^{1-\e}} \b_k. 
\]
Therefore, we can conclude that \eqref{hXbXcouplebound} is bounded above by 
\begin{align*}
\left( \frac{2}{\d n^{1/\k}} \sum_{|k|\leq Ln} \b_k \right) \left\{ \Delta(\phi;n^{-\e}) \|u\|_\infty + \frac{2\|\phi\|_\infty \|u\|_\infty }{n^{1-\e}} \max_{\substack{|k|\leq Ln\\ M_k > \frac{n^{1/\k}}{\log n}}} E_\w^{a_k^n}[T_{c_k^n}] \right\}.
\end{align*}
If we choose $\e>0$ as in \eqref{acprop3}, then Corollary \ref{cor:Wnweakconv} together with \eqref{acprop3} implies that this converges to 0 in $Q$-probability, which in turn implies that \eqref{bigMcoupling} holds. 

It remains now to show that we can choose the intervals $[a_k^n,c_k^n]$ containing $\nu_k$ when $M_k > n^{1/\k}/\log n$ so that \eqref{acprop1}-\eqref{acprop3} hold. 
First, fix a constant $K > 1/(\k \bar\nu E_P[-\log \rho_0] )$, and let $a_k^n = \nu_{k - \lceil K \log n \rceil}$. With this definition of $a_k^n$, property \eqref{acprop1} follows from \cite[Proposition 3]{estzWQL}. 
To define the right endpoint $c_k^n$, fix $\gamma < 1$ and for $k \in \Z$ with $M_k \geq \frac{n^{1/\k}}{\log n} \geq n^{\gamma}$ (for $n$ large enough) define
\[
 c_k^n = \inf\{ j> \nu_k: \Pi_{\nu_k,j} \geq n^\gamma \}. 
\]
To verify \eqref{acprop2} for this choice of $c_k^n$, first note that 
\begin{align}
& Q\left( \sum_{|k|\leq Ln} \sum_{c_k^n < x < \nu_{k+1}} b_{x,k} \ind{M_k > \frac{n^{1/\k}}{\log n}} \geq \d n^{1/\k} \right) \nonumber \\
&\quad \leq Q\left( \sum_{|k|\leq Ln} \ind{M_k >  \frac{n^{1/\k}}{\log n}} \geq \d \log n \right) + Q\left( \exists |k| \leq Ln: \!\!\! \sum_{c_k^n < x < \nu_{k+1}} \!\!\! b_{x,k} \ind{M_k > \frac{n^{1/\k}}{\log n}} \geq \frac{n^{1/\k}}{\log n} \right) \nonumber \\
&\quad \leq \frac{2Ln+1}{\d \log n} Q\left( M_0 > \frac{n^{1/\k}}{\log n} \right) + (2Ln + 1) Q\left(  \sum_{c_0^n < x < \nu_{1}} \!\!\! b_{x,0} \geq \frac{n^{1/\k}}{\log n}, \, M_0 > \frac{n^{1/\k}}{\log n} \right). \label{acprop2ub1} 
\end{align}
The tail asymptotics of $M_0$ in \eqref{Mtail} imply that the first term in \eqref{acprop2ub1} is $\bigo((\log n)^{\k-1}) = o(1)$.
To control the probability in the second term of \eqref{acprop2ub1}, 
let $T_x^+ = \inf\{ n\geq 1: X_n = x\}$ denote the first return time of the RWRE to $x\in \Z$ and note that for $0\leq x < \nu_1$
\begin{align*}
 b_{x,0} = E_\w^x\left[ \sum_{n=0}^{T_{\nu_1}-1} \ind{X_n = x} \right] = \frac{1}{P_\w^x(T_{\nu_1} < T_x^+ )} 
&= \frac{1+R_{x+1,\nu_1-1}}{\w_x} \\
&= 1 + R_{x,\nu_1-1} + R_{x+1,\nu_1-1} \\
&\leq 1 + 2 \nu_1\left( \max_{x\leq j< \nu_1} \Pi_{x,j} \right).
\end{align*}
(The third equality is a standard calculation for hitting probabilities for reversible Markov chains, and can be deduced for instance from \cite[equation (2.1.4)]{zRWRE}.)
Using this upper bound for $b_{x,k}$ we can conclude that (for $n$ large enough)
\begin{align*}
& Q\left(  \sum_{c_0^n < x < \nu_{1}} \!\!\! b_{x,0} \geq \frac{n^{1/\k}}{\log n}, \, M_0 > \frac{n^{1/\k}}{\log n} \right) \\
&\qquad \leq Q(\nu_1 \geq (\log n)^2) + Q\left( \max_{c_0^n < i \leq j < \nu_1} \Pi_{i,j} \geq \frac{n^{1/\k}}{4(\log n)^5}, \, M_0 \geq \frac{n^{1/\k}}{\log n} \right).
\end{align*}
Since $\nu_1$ has exponential tails, in order to show that $\eqref{acprop2ub1}$ vanishes as $n\ra\infty$ we need only to show that the second probability on the right above is $o(n^{-1})$. 
This will be accomplished using some estimates from \cite{p1LSL2}. Let $J = \max\{ j \in [1,\nu_1]: \Pi_{0,j-1} = M_0 \}$ be the (last) location in $(0,\nu_1]$ where the potential achieves its maximum in that interval. Then, define $M^-$ and $M^+$ by 
\[
 M^- = \min\{ \Pi_{i,j}: \, 0 < i \leq j < J \} \wedge 1, \quad \text{and}\quad 
 M^+ = \max\{ \Pi_{i,j}: \, J < i \leq j < \nu \} \vee 1. 
\]
That is $M^-$ controls the amount the potential can decrease before $J$ and $M^+$ controls the amount the potential can increase after $J$. 
With this notation it is easy to see that 
\[
 \Pi_{i,j} \leq 
\begin{cases}
 \frac{M_0}{n^{\gamma} M^-} & \text{if } c_0^n < i \leq J, \, j < \nu_1\\
 M^+ & \text{if } J \leq i \leq j < \nu_1,
\end{cases}
\]
and therefore
\begin{equation}\label{Mpmbounds}
\begin{split}
 Q\left( \max_{c_0^n < i \leq j < \nu_1} \Pi_{i,j} \geq \frac{n^{1/\k}}{4(\log n)^5}, \, M_0 \geq \frac{n^{1/\k}}{\log n} \right)
&\leq Q\left( M^+ \geq \frac{n^{1/\k}}{4(\log n)^5}, \, M_0 \geq \frac{n^{1/\k}}{\log n} \right)\\
&\qquad + Q\left( M^- \leq n^{-\gamma/2}, \, M_0 \geq \frac{n^{1/\k}}{\log n} \right)\\
&\qquad  + Q\left( M_0 \geq \frac{n^{1/\k+\gamma/2}}{4(\log n)^5} \right).
\end{split}
\end{equation}
It follows from \cite[Lemma 4.1]{p1LSL2} that the first two probabilities on the right are $o(n^{-1})$, and the tail decay of $M_0$ in \eqref{Mtail} implies that the third probability is also $o(n^{-1})$. 

We have shown that our choice of $a_k^n$ and $c_k^n$ satisfy \eqref{acprop1} and \eqref{acprop2}, and it only remains to show that \eqref{acprop3} also holds. To this end, it is clearly enough to show 
\begin{equation}\label{acprop3sc}
 Q\left( M_0 \geq \frac{n^{1/\k}}{\log n}, E_\w^{a_0^n}[T_0] \geq \frac{n^{1-2\e}}{2} \right) + Q\left( M_0 \geq \frac{n^{1/\k}}{\log n}, E_\w^{0}[T_{c_0^n}] \geq \frac{n^{1-2\e}}{2} \right) = o(n^{-1}), 
\end{equation}
for some $\e>0$. 
For the first probability in \eqref{acprop3sc}, note that 
\[
 Q\left( M_0 \geq \frac{n^{1/\k}}{\log n}, E_\w^{a_0^n}[T_0] \geq \frac{n^{1-2\e}}{2} \right)
= Q\left( M_0 \geq \frac{n^{1/\k}}{\log n}\right) Q\left(E_\w^{a_0^n}[T_0] \geq \frac{n^{1-2\e}}{2} \right),
\]
since $M_0$ is independent of the environment to the left of $0$ under the measure $Q$. Then, recalling that $a_0^n = \nu_{-\lceil K\log n \rceil}$ we have that 
\[
 Q\left(E_\w^{a_0^n}[T_0] \geq \frac{n^{1-2\e}}{2} \right) 
= Q\left( \sum_{k=-\lceil K\log n\rceil}^{-1} \!\!\! \b_k \geq \frac{n^{1-2\e}}{2} \right)
\leq \lceil K \log n \rceil Q\left( \b_0 \geq \frac{n^{1-2\e}}{2 \lceil K \log n \rceil } \right).
\]
Therefore, the tail decay of $\b_0$ and $M_0$ imply that the first probability in \eqref{acprop3sc} is 
$o(n^{-1})$ as long as $\e < 1/2$. 
To control the second term in \eqref{acprop3sc}, note that \eqref{EwTnform} implies that 
\[
 E_\w[T_{c_0^n}] = c_0^n + 2 \sum_{j=0}^{c_0^n-1} \sum_{i=0}^j \Pi_{i,j} + 2 W_{-1} \sum_{j=0}^{c_0^n-1} \Pi_{0,j} 
\leq \nu_1 + 2 \nu_1^2 n^\gamma + 2 W_{-1} \nu_1 n^\gamma,
\]
where the last inequality follows from the fact that $\Pi_{i,j} \leq \Pi_{0,j} \leq n^\gamma$ for any $0 \leq i \leq j < c_0^n < \nu_1$. 
Thus, since $\nu_1$ and $W_{-1}$ have exponential tails under the measure $Q$ we have that the second probability in \eqref{acprop3sc} is bounded above by 
\[
 Q\left( \nu_1 + 2 \nu_1^2 n^\gamma + 2 W_{-1} \nu_1 n^\gamma \geq \frac{n^{1-2\e}}{2} \right) = o(n^{-1}), \quad \text{if } 0 < \e < \frac{1-\gamma}{2}. 
\]
This completes the proof of \eqref{acprop3sc}, which in turn completes the proof of the lemma. 
\end{proof}

\begin{proof}[Proof of Lemma \ref{lem:rwcouple2}]
The key to the proof of Lemma \ref{lem:rwcouple2} is to show that the initial configurations $\chi_0$ and $\bar\chi_0$ can be coupled under the measure $E_Q[P_\w^{\mu_u^n(\w),\pi_u^n(\w)}(\cdot)]$ so that there are typically much less than $n^{1/\k}$ particles that are not coupled with a particle in the other system. 
That is, since we can perfectly couple for all time the first $\min\{ \chi_0(x), \bar\chi_0(x)\}$ particles at each site $x \in \Z$, we need only to show that 
\begin{equation}\label{bchiuncoupled}
 \lim_{n\ra\infty} E_Q\left[ P_\w^{\mu_u^n(\w),\pi_u^n(\w)} \left( \sum_{x} | \chi_0(x) - \bar\chi_0(x) |  \geq \d n^{1/\k} \right) \right] = 0, \quad \forall \d>0. 
\end{equation}
For any $\th_1,\th_2>0$, it is a standard property of Poisson random variables that one can construct a coupling of random variables $Y_1 \sim \text{Poisson}(\th_1)$ and $Y_2 \sim \text{Poisson}(\th_2)$ so that $|Y_1-Y_2| \sim \text{Poisson}(|\th_1-\th_2|)$. 
Indeed, if $\th_1 < \th_2$ then we can let $Z_1$ and $Z_2$ be independent Poisson random variables with parameters $\th_1$ and $\th_2-\th_1$, respectively, and then let $Y_1 = Z_1$ and $Y_2 = Z_1+Z_2$. 
Therefore,  since $g_\w(x) = \sum_{k: x < \nu_{k+1}} b_{x,k}$
we can construct a coupling of the initial configurations so that $\{|\chi_0(x) - \bar\chi_0(x)|\}_{x\in \Z}$ are independent with 
\[
 | \chi_0(x) - \bar\chi_0(x) | \sim 
\text{Poisson}\left( \left| \sum_{k: x < \nu_{k+1}} b_{x,k} \left( u(\tfrac{x}{n}) - u(\tfrac{\nu_k}{n}) \right) \right| \right). 
\]
From this it follows easily that 
\begin{align}
& P_\w^{\mu_u^n(\w),\pi_u^n(\w)} \left( \sum_{x} | \chi_0(x) - \bar\chi_0(x) |  \geq \d n^{1/\k} \right) \nonumber \\
&\qquad \leq \frac{1}{\d n^{1/\k}} \sum_{k \in \Z} \sum_{x<\nu_{k+1}} b_{x,k} \left| u( \tfrac{x}{n}) - u(\tfrac{\nu_k}{n}) \right| \nonumber \\
&\qquad \leq \frac{1}{\d n^{1/\k}} \sum_{|k|\leq Ln} \sum_{x<\nu_{k+1}} b_{x,k} \Delta\left(u; \tfrac{|x-\nu_k|}{n} \right) + \frac{\|u\|_\infty}{\d n^{1/\k}} \sum_{k > L n} \sum_{x \leq Ln} b_{x,k}, \label{uncoupledub}
\end{align}
where in the second inequality we use $\supp u \subset [-L,L]$. 
To control the second sum in \eqref{uncoupledub}, note that the definition of $b_{x,k}$ implies that 
\begin{equation}\label{bxksumsep}
\begin{split}
 \sum_{k > L n} \sum_{x \leq Ln} b_{x,k} &= E_\w^{\nu_{\fl{Ln}+1}}\left[ \sum_{m=0}^\infty \ind{X_m \leq Ln} \right] \\
& \leq E_\w^{\nu_{\fl{Ln}+1}}\left[ \sum_{m=0}^\infty \ind{X_m \leq \nu_{\fl{Ln}} } \right] 
\overset{\text{Law}}{=} E_\w\left[ \sum_{m=0}^\infty \ind{X_m \leq \nu_{-1}} \right],
\end{split}
\end{equation}
where the last equality indicated equality in law under the measure $Q$. 
Since this last quenched expectation is $Q$-a.s.\ finite, it follows that the second term in \eqref{uncoupledub} converges to $0$ in $Q$-probability. 
We wish to show that the first term in \eqref{uncoupledub} also converges to 0 in $Q$-probability. 
To this end, note that for any $\e \in (0,1)$ the first term in \eqref{uncoupledub} is bounded above by 
\begin{align*}
& 
\frac{\Delta\left(u; n^{-\e} \right) }{\d n^{1/\k}} \sum_{|k|\leq Ln} \sum_{\substack{x<\nu_{k+1}\\|x-\nu_k| \leq n^{1-\e}}} \!\!\! b_{x,k} 
+ \frac{2\|u\|_\infty }{\d n^{1/\k}} \sum_{|k|\leq Ln} \sum_{\substack{x<\nu_{k+1}\\|x-\nu_k| > n^{1-\e}}} \!\!\! b_{x,k} \\
&\qquad \leq \frac{\Delta\left(u; n^{-\e} \right) }{\d n^{1/\k}} \sum_{|k|\leq Ln} \b_k + \frac{2\|u\|_\infty }{\d n^{1/\k}} \sum_{|k|\leq Ln} \sum_{\substack{x<\nu_{k+1}\\|x-\nu_k| > n^{1-\e}}} \!\!\! b_{x,k}.
\end{align*}
Since $\Delta(u;n^{-\e}) \ra 0$, Corollary \ref{cor:Wnweakconv} implies that the first term on the right converges to 0 in $Q$-probability. 
To show that the second term also converges to $0$ in $Q$-probability, we further decompose it as
\begin{align}
 & \frac{2\|u\|_\infty }{\d n^{1/\k}} \sum_{\substack{|k|\leq Ln \\ M_k \leq \frac{n^{1/\k}}{\log n}}} \sum_{\substack{x<\nu_{k+1}\\|x-\nu_k| > n^{1-\e}}} \!\!\! b_{x,k}
+ \frac{2\|u\|_\infty }{\d n^{1/\k}}  \sum_{\substack{|k|\leq Ln \\ M_k > \frac{n^{1/\k}}{\log n}}} \sum_{\substack{x<\nu_{k+1}\\|x-\nu_k| > n^{1-\e}}} \!\!\! b_{x,k} \nonumber \\
&\qquad \leq \frac{2\|u\|_\infty }{\d n^{1/\k}} \sum_{|k|\leq Ln} \b_k \ind{M_k \leq \frac{n^{1/\k}}{\log n}}
+ \frac{2\|u\|_\infty }{\d n^{1/\k}}  \sum_{\substack{|k|\leq Ln \\ M_k > \frac{n^{1/\k}}{\log n}}} \sum_{\substack{x<\nu_{k+1}\\|x-\nu_k| > n^{1-\e}}} \!\!\! b_{x,k} . \label{sumsfar}
\end{align}
The first term in \eqref{sumsfar} converges to 0 in $Q$-probability by \eqref{betasmallsum}. 
For the second term in \eqref{sumsfar}, recall the definition of the intervals $[a_k^n,c_k^n]$ containing $\nu_k$ when $M_k > n^{1/\k}/\log n$ that were given in the proof of Lemma \ref{lem:rwcouple1} above. Since $a_k^n = \nu_{k - \lceil K \log n \rceil}$ and $c_k^n \in (\nu_k,\nu_{k+1})$ we have that 
$|x-\nu_k| > n^{1-\e}$ implies that $x \notin [a_k^n,c_k^n]$ unless
$\nu_{k+1} - \nu_{k-\lceil K \log n \rceil} > n^{1-\e}$. 
For $n$ large enough we have 
\[
 Q(\exists |k|\leq Ln : \, \nu_{k+1} - \nu_{k-\lceil K \log n \rceil} > n^{1-\e} )
\leq 3 L n \, Q\left( \nu_1 > \frac{n^{1-\e}}{2 K \log n } \right),
\]
and since $\nu_1$ has exponential tails the right side vanishes as $n\ra\infty$. 
It follows from this, together with \eqref{acprop1} and \eqref{acprop2}, that the second term in \eqref{sumsfar} converges to 0 in $Q$-probability. 	
Combined with our above estimates, this completes the proof that \eqref{uncoupledub} converges to 0 in $Q$-probability, and this is enough to prove that \eqref{bchiuncoupled} holds. 
\end{proof}

\subsection{Changing the distribution on the environment}

The results proved so far will be enough to prove a hydrodynamic limit of the form in Theorem \ref{th:RWREhdl} for a system of independent RWRE with locally stationary initial configurations, but where the environment $\w$ has distribution $Q$ instead of $P$ as in the statement of Theorem \ref{th:RWREhdl}. In this subsection, we complete the final comparison that will be needed for the proof of Theorem \ref{th:RWREhdl} by giving a coupling of two systems of RWRE in different environments $\w$ and $\tilde{\w}$ where the environments $\w$ and $\tilde{\w}$ have distribution $P$ and $Q$ respectively. 

We begin by giving a coupling of the environments $\w$ and $\tw$. 
Recall the definition of the blocks $B_k$ of the environment between ladder locations from \eqref{blockdef} and that the blocks $\{B_k\}_{k\neq 0}$ are i.i.d.\ under the measure $P$, the blocks $\{B_k\}_{k\in\Z}$ are i.i.d.\ under $Q$, and $B_1$ has the same distribution under both $P$ and $Q$. Therefore, given an environment $\w$ with distribution $P$ we can construct an environment $\tilde\w$ with distribution $Q$ by simply removing the block $B_0$ from the environment $\w$. 
That is, 
\begin{equation}\label{PQcouple}
 \tilde\w_x = 
\begin{cases}
 \w_{\nu_1+x} &  \text{if } x\geq 0 \\
 \w_{\nu_0+x} &  \text{if } x< 0. 
\end{cases}
\end{equation}

Having given this coupling of environments with distribution $P$ and $Q$, respectively, 
we are now ready to state the following Proposition which shows that changing the distribution on environments from $Q$ to $P$ does not affect the hydrodynamic limit if we use the sequence of locally stationary initial configurations $\mu_u^n(\w)$. 

\begin{prop}\label{pr:PQsystemcouple}
 Let Assumptions \ref{asmiid}--\ref{asmnl} hold with $\k \in (0,1)$ fixed, and let $u \in \mathcal{C}_0$.
For any two environments $\w,\tilde\w \in \Omega$ there exists a coupling $P_{\w,\tilde\w}^{\mu_u^n(\w),\mu_u^n(\tilde\w)}$ of two systems of random walks $\{\chi_n\}_{n\geq 0}$ and $\{\tilde\chi_n\}_{n\geq 0}$ with marginal distributions $P_\w^{\mu_u^n(\w)}$ and $P_{\tilde\w}^{\mu_u^n(\tilde\w)}$, respectively, and such that
if $\mathcal{P}$ is the coupling of pairs of environments $(\w,\tilde\w) \in \Omega^2$ as given in \eqref{PQcouple} then 
\[
 \lim_{n\ra\infty} E_{\mathcal{P}} \left[ P_{\w,\tilde\w}^{\mu_u^n(\w),\mu_u^n(\tilde\w)} \left( \sup_{t\leq T} \left| \sum_{k\in \Z} ( \chi_{tn^{1/\k}}(k) - \tilde\chi_{tn^{1/\k}}(k) ) \phi(t,k/n) \right| \geq \d n^{1/\k} \right) \right] = 0, \qquad \forall \d>0,
\]
for any $\phi\in \mathcal{C}_0(\R_+\times \R)$ and $T<\infty$. 
\end{prop}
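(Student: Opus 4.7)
The plan is to reduce to the comparison of two systems living in the same environment $\w$ via an explicit shift, and then to control the discrepancy of their initial configurations using a Poisson coupling identical to the one employed in the proof of Lemma \ref{lem:rwcouple2}. More precisely, I would start from the key observation that the coupling in \eqref{PQcouple} makes $\tw$ into $\w$ with the block $B_0 = (\w_{\nu_0},\ldots,\w_{\nu_1-1})$ removed, so the transition probabilities satisfy $\tw_x = \w_{x+\nu_1}$ for $x\geq 0$ and $\tw_x = \w_{x+\nu_0}$ for $x<0$. Consequently, if a random walker in $\tw$ started at $y\geq 0$ is shifted by $+\nu_1$, its law under $P_{\tw}^y$ agrees with $P_{\w}^{y+\nu_1}$, and analogously with $+\nu_0$ on the negative side. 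This lets me introduce an auxiliary system $\chi'_n$ on the common environment $\w$ by setting $\chi'_n(x) = \tilde\chi_n(x-\nu_1)$ for $x \geq \nu_1$, $\chi'_n(x) = \tilde\chi_n(x-\nu_0)$ for $x < \nu_0$, and $\chi'_n(x) = 0$ for $x\in[\nu_0,\nu_1)$, whose joint evolution (for the shifted labels) has exactly the distribution of $\tilde\chi$.

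Next I would replace the target functional $\sum_x \tilde\chi_{tn^{1/\k}}(x)\phi(t,x/n)$ by $\sum_x \chi'_{tn^{1/\k}}(x)\phi(t,(x-\nu_i)/n)$ and absorb the difference in $\phi$ arguments into a $\Delta(\phi;\,|\nu_0|\vee|\nu_1|/n)$ error times the total mass. Since $\nu_0,\nu_1$ have exponential tails under $\mathcal{P}$, and the total number of particles in $[-Ln,Ln]$ is $O_\Pv(n^{1/\k})$ by Corollary \ref{cor:Wnweakconv}, this replacement produces an error that is $o(n^{1/\k})$. Hence the problem reduces to comparing two systems $\chi$ and $\chi'$ on the same environment $\w$, which I couple perfectly walker-by-walker at every site where both systems have at least one particle; the only contribution comes from the uncoupled particles, whose count at site $x$ is Poisson with mean $|\mathrm{mean}_\chi(x)-\mathrm{mean}_{\chi'}(x)|$ by the standard thinning/superposition argument used in Lemma \ref{lem:rwcouple2}.

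The core estimate is therefore to show
\[
\frac{1}{n^{1/\k}} \sum_{x\in \Z} \bigl|u(\tfrac{x}{n}) g_\w(x) - \tilde u_n(x) g_\w(x)\bigr| \,\|\phi\|_\infty \longrightarrow 0
\]
in $\mathcal{P}$-probability, where $\tilde u_n(x) = u(\tfrac{x-\nu_1}{n}) \one_{x\geq \nu_1} + u(\tfrac{x-\nu_0}{n})\one_{x<\nu_0}$ and the terms for $x\in[\nu_0,\nu_1)$ must be bounded separately. For $x$ outside $[\nu_0,\nu_1)$ the difference in Poisson parameters is at most $\Delta(u;(|\nu_0|\vee|\nu_1|)/n)\,g_\w(x)$, and summing over $x\in\supp u \cdot n$ gives a bound of order $\Delta(u;O(1)/n) \cdot \sum_{|x|\leq Ln} g_\w(x)$, which is $o(1)\cdot O(n^{1/\k}) = o(n^{1/\k})$ by the uniform continuity of $u$ and Corollary \ref{cor:Wnweakconv}. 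For the finitely many sites $x\in[\nu_0,\nu_1)$, the extra mass is $\|u\|_\infty\sum_{x=\nu_0}^{\nu_1-1}g_\w(x)$; since $\nu_1-\nu_0$ has exponential tails and each $g_\w(x)$ has $\k$-stable tails (so $\mathcal{P}(g_\w(x)>\varepsilon n^{1/\k}) = O(n^{-1})$ for fixed $\varepsilon>0$), a union bound over $O(1)$ sites yields that this extra mass is $o(n^{1/\k})$ in probability.

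The main obstacle I anticipate is the block $B_0$ step: under $\mathcal{P}$ (which puts $\w$ under $P$) the block $B_0$ has the size-biased distribution from the inspection paradox, so one cannot simply invoke the $Q$-estimates. However, since $B_0$ contributes only $O(1)$ sites and the tail asymptotics $\mathcal{P}(g_\w(0)>t) \sim Ct^{-\k}$ (which follow from \eqref{bnutails} together with \eqref{gform}) hold regardless of whether the origin is in the zero-th block or the first, a direct Markov-inequality union bound suffices. Once this estimate is in place, the standard Poisson coupling plus Markov inequality (as displayed in \eqref{uncoupledub}) gives the conclusion, and combining with the $\phi$-continuity error completes the proof.
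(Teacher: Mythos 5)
Your opening step contains a fatal error that the rest of the argument depends on: you assert that shifting a walker in $\tw$ started at $y\geq 0$ by $+\nu_1$ gives a walker with law $P_\w^{y+\nu_1}$. This is false. The coupling \eqref{PQcouple} identifies $\tw$ with $\w$ \emph{minus the block} $[\nu_0,\nu_1)$, so there is no single spatial shift turning a $\tw$-walk into a $\w$-walk: a $\tw$-walker that steps from $0$ to $-1$ corresponds, under your relabeling, to jumping from $\nu_1$ directly to $\nu_0-1$, which is not a nearest-neighbour move in $\w$. Consequently your auxiliary configuration $\chi'$ does \emph{not} evolve as a system of RWRE in $\w$, and the claimed ``perfect walker-by-walker coupling'' between $\chi$ and $\chi'$ cannot be performed. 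The paper instead constructs a genuine two-environment path coupling (via shared uniforms at corresponding sites) under which paired walks agree step for step \emph{except} during excursions into the block region, and then proves that the resulting displacement is dominated by the a.s.\ finite random variable $\mathfrak{D}$ of \eqref{Vdef} (the time the $\w$-walk spends left of $\nu_1$ plus the time the $\tw$-walk spends left of $0$ plus $\nu_1-\nu_0$); the finiteness of $\mathfrak{D}$, which hinges on transience, is exactly what makes the $\phi$-continuity argument work. This entire mechanism is absent from your proposal.

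There is a second, smaller gap in the initial-configuration comparison. For $x<\nu_0$ you compare $u(x/n)\,g_\w(x)$ with $u((x-\nu_0)/n)\,g_\w(x)$, so the discrepancy is attributed entirely to the argument of $u$. But the relabeled particle at $x<\nu_0$ has Poisson mean $u((x-\nu_0)/n)\,g_{\tw}(x-\nu_0)$, and $g_{\tw}(x-\nu_0)\neq g_\w(x)$ because $g$ depends on the environment to the right and $\tw$ is missing the block $[\nu_0,\nu_1)$. Controlling $\sum_{x<0}\lvert g_\w(\nu_0+x)-g_{\tw}(x)\rvert$ requires a separate argument (this is Lemma~\ref{PQcouplediff} in the paper, which shows the sum is a.s.\ finite via an explicit computation). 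As written, your estimate silently drops this term. The remaining elements of your sketch (the Poisson coupling of initial configurations, the $\Delta(u;\cdot)$ modulus bound, the tightness of $n^{-1/\k}\sum_{|x|\le Ln}g_\w(x)$, and the tail control for the block sites) are all in the right spirit and match what the paper does, but they do not compensate for the missing path-coupling analysis.
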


In preparation for the proof of Proposition \ref{pr:PQsystemcouple}, we will first prove the following Lemma which will be used to show that the distributions $\mu_u^n(\w)$ and $\mu_u^n(\tw)$ on initial configurations are very similar when $\w$ and $\tw$ are coupled as in \eqref{PQcouple}.

\begin{lem}\label{PQcouplediff}
 Under the coupling $(\w,\tilde\w)$ of the measures $P$ and $Q$ given in \eqref{PQcouple}, with probability one, 
\[
 \sum_{x \leq -1} | g_\w(\nu_0+x) - g_{\tilde\w}(x) | < \infty. 
\]
\end{lem}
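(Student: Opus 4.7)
\medskip

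\noindent\textbf{Proof plan.} My plan is to get an explicit, closed-form expression for $g_\w(\nu_0+x) - g_{\tilde\w}(x)$ by writing everything in terms of the quantities $R_i$ and $\Pi_{i,j}$ from \eqref{PiWRdef}, and then sum.

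The starting point is the identity $g_\w(y) = 1 + R_y(\w) + R_{y+1}(\w)$ from \eqref{gform}, which reduces matters to controlling $\sum_{x \leq -1} |R_{\nu_0+x}(\w) - R_x(\tilde\w)|$ and the analogous sum with $x$ replaced by $x+1$. The key computation is to evaluate $R_x(\tilde\w)$ for $x \leq -1$ by splitting the summation according to whether the index $j$ in $R_x(\tilde\w) = \sum_{j\geq x} \Pi_{x,j}(\tilde\w)$ satisfies $j \leq -1$ or $j \geq 0$. The coupling \eqref{PQcouple} gives $\tilde\rho_y = \rho_{\nu_0+y}$ for $y\leq -1$ and $\tilde\rho_y = \rho_{\nu_1+y}$ for $y\geq 0$, so
\[
R_x(\tilde\w) = \sum_{k=\nu_0+x}^{\nu_0-1} \Pi_{\nu_0+x,k}(\w) + \Pi_{\nu_0+x,\nu_0-1}(\w) \cdot R_{\nu_1}(\w).
\]
On the other hand, splitting $R_{\nu_0+x}(\w)$ at $j=\nu_0$ and factoring $\Pi_{\nu_0+x,j}(\w) = \Pi_{\nu_0+x,\nu_0-1}(\w)\,\Pi_{\nu_0,j}(\w)$ for $j \geq \nu_0$ gives
\[
R_{\nu_0+x}(\w) = \sum_{k=\nu_0+x}^{\nu_0-1} \Pi_{\nu_0+x,k}(\w) + \Pi_{\nu_0+x,\nu_0-1}(\w) \cdot R_{\nu_0}(\w).
\]
Subtracting, the finite sums cancel and I obtain the clean identity
\[
R_{\nu_0+x}(\w) - R_x(\tilde\w) = \Pi_{\nu_0+x,\nu_0-1}(\w) \bigl[R_{\nu_0}(\w) - R_{\nu_1}(\w)\bigr].
\]

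Summing the absolute values over $x \leq -1$ and reindexing $y = \nu_0+x$ gives
\[
\sum_{x\leq -1} |R_{\nu_0+x}(\w) - R_x(\tilde\w)| = |R_{\nu_0}(\w) - R_{\nu_1}(\w)| \cdot \sum_{y\leq \nu_0-1} \Pi_{y,\nu_0-1}(\w) = |R_{\nu_0}(\w) - R_{\nu_1}(\w)| \cdot W_{\nu_0-1}(\w),
\]
where $W_{\nu_0-1}(\w)$ is the quantity defined in \eqref{PiWRdef} and is $P$-a.s.\ finite since $E_P[\log \rho_0] < 0$. The second required sum, $\sum_{x\leq -1} |R_{\nu_0+x+1}(\w) - R_{x+1}(\tilde\w)|$, is handled identically, with the only wrinkle being the boundary case $x=-1$: there $x+1 = 0$, and using the coupling one checks directly that $R_0(\tilde\w) = R_{\nu_1}(\w)$, so this boundary term contributes simply $|R_{\nu_0}(\w) - R_{\nu_1}(\w)|$. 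The remaining $x\leq -2$ terms give another factor of $W_{\nu_0-1}(\w)$. Adding the two sums then yields
\[
\sum_{x\leq -1} |g_\w(\nu_0+x) - g_{\tilde\w}(x)| \leq \bigl(1 + 2 W_{\nu_0-1}(\w)\bigr) \cdot |R_{\nu_0}(\w) - R_{\nu_1}(\w)|,
\]
which is $P$-a.s.\ finite because $R_{\nu_0}(\w)$, $R_{\nu_1}(\w)$, and $W_{\nu_0-1}(\w)$ are all a.s.\ finite under Assumption \ref{asmt}.

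The only delicate point is verifying the algebraic telescoping cleanly, since a naive bound (using that $\Pi_{\nu_0+x,\nu_0-1}(\w)$ is typically large for $x \to -\infty$) would fail badly; it is essential that the large factor $\Pi_{\nu_0+x,\nu_0-1}(\w)$ multiplies the \emph{fixed} (in $x$) bounded quantity $R_{\nu_0}(\w) - R_{\nu_1}(\w)$, and that the sum over $x$ of these prefactors is exactly the convergent series $W_{\nu_0-1}(\w)$. There is no probabilistic obstacle — the bookkeeping of the two series splits at $\nu_0$ vs.\ $\nu_1$ is the entire content of the lemma.
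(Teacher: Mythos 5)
Your proof is correct and follows essentially the same route as the paper's: write $g$ in terms of $R$, split each $R$ at the ladder location via $R_{\nu_0+x} = R_{\nu_0+x,\nu_0-1} + \Pi_{\nu_0+x,\nu_0-1}R_{\nu_0}$ (and the analogue for $\tilde\w$), observe the finite sums cancel, and then sum the resulting $\Pi_{\nu_0+x,\nu_0-1}|R_{\nu_0}-R_{\nu_1}|$ to get the factor $1+2W_{\nu_0-1}$. The only cosmetic difference is that you bound by the triangle inequality whereas the paper observes that the two terms have the same sign and records an exact equality; this does not affect the conclusion.
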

\begin{proof}
Recall the definitions of the random variables $\Pi_{i,j}$, $W_i$, $R_i$, and $R_{i,\ell}$ as given in \eqref{PiWRdef} and \eqref{WRpartialdef}. 
We will use the notation $\tilde{\Pi}_{i,j}$, $\tilde{W}_i$, $\tilde{R}_i$ and $\tilde{R}_{i,\ell}$ for the random variables as defined in \eqref{PiWRdef} and \eqref{WRpartialdef} but corresponding to the environment $\tw$ instead of $\w$. 
Then, recalling \eqref{gform}, this notation gives that 
\[
g_{\tilde\w}(x) - g_\w(\nu_0+x) = \tilde{R}_x + \tilde{R}_{x+1} - R_{\nu_0 + x} - R_{\nu_0+x+1}. 
\]
Next, note that 
\[
 R_{\nu_0 + x} = R_{\nu_0 + x,\nu_0-1} + \Pi_{\nu_0+x,\nu_0-1} R_{\nu_0}, \quad\text{for } x<0. 
\]
A similar decomposition is true for $\tilde{R}_x$, but using the coupling of $\w$ and $\tw$ given by \eqref{PQcouple} we get that 
\[
 \tilde{R}_x = \tilde{R}_{x,-1} + \tilde{\Pi}_{x,-1}\tilde{R}_0
= R_{\nu_0 + x,\nu_0-1} + \Pi_{\nu_0+x,\nu_0-1} R_{\nu_1}, \quad \text{for } x < 0. 
\]
Therefore, we obtain that 
\[
 g_{\tilde\w}(x) - g_\w(\nu_0+x) = \left( \Pi_{\nu_0+x,\nu_0-1} + \Pi_{\nu_0+x+1,\nu_0-1} \right)\left( R_{\nu_1} - R_{\nu_0} \right).
\]
Note that this shows that the sign of $g_{\tilde\w}(x) - g_\w(\nu_0+x)$ does not depend on $x$ and is the same as the sign of $R_{\nu_1} - R_{\nu_0}$. Therefore, we can conclude that 
\[
 \sum_{x<0} \left| g_{\tilde\w}(x) - g_\w(\nu_0+x) \right| = ( 1 + 2 W_{\nu_0-1} ) \left|R_{\nu_1}-R_{\nu_0}\right|. 
\]
\end{proof}

We now return to the proof of Proposition \ref{pr:PQsystemcouple}. 
\begin{proof}[Proof of Proposition \ref{pr:PQsystemcouple}]

We begin by giving a brief overview of the outline of the proof. 
Recall that in coupling $(\w,\tw)$ defined in \eqref{PQcouple}, the site $x$ in the environment $\tw$ correspond to either $\nu_0+x$ if $x<0$ or $\nu_1+x$ if $x\geq 0$ in the environment $\w$; we will refer to these as \emph{corresponding sites} in the two environments. 
The coupling of the systems of RWRE that we will construct will have two parts to it. First of all, we will create a coupling of the initial configurations 
so that the number of particles started at corresponding sites in the two environments are approximately the same. 
Secondly, we will couple as many of the particles as possible in the system $\tilde\chi$ to particles started at corresponding sites in the system $\chi$, and we will couple the paths of these random walks so that the difference between them is bounded for all time (for each fixed pair of walks the bound will be a finite random variable).


Using the standard coupling of Poisson random variables with different parameters we can construct a coupling of the initial configurations $\chi_0$ and $\tilde\chi_0$ with the required distributions so that 
\begin{align}
  | \tilde\chi_0(x) - \chi_0(\nu_1+x) | &\sim \text{Poisson}\left( g_{\tilde\w}(x)| u_0(\tfrac{x}{n}) - u_0(\tfrac{\nu_1+x}{n}) | \right), &\quad \forall x\geq 0 \label{ictildechicoupleR} \\
 | \tilde\chi_0(x) - \chi_0(\nu_0+x) | &\sim \text{Poisson}\left( | g_{\tilde\w}(x) u_0(\tfrac{x}{n}) - g_\w(\nu_0+x) u_0(\tfrac{\nu_0+x}{n}) | \right), &\quad \forall x < 0, \label{ictildechicoupleL}
\end{align}
and such that all the above Poisson random variables are independent. 
(Note that in \eqref{ictildechicoupleR} we used that $g_{\tilde\w}(x) = g_\w(\nu_1+x)$ for all $x\geq 0$.)
Given these initial configurations, let 
\[
 \gamma(x) = 
\begin{cases}
 \min\{ \chi_0(\nu_0 + x), \, \tilde\chi_0(x) \} & \text{ if } x < 0 \\
 \min\{ \chi_0(\nu_1 + x), \, \tilde\chi_0(x) \} & \text{ if } x \geq 0. 
\end{cases}
\]
That is, $\gamma(x)$ gives the number of particles started at $x$ in the environment $\tw$ that are matched with a particle started at the corresponding site in the environment $\w$ (either $\nu_0 + x$ if $x<0$ or $\nu_1+x$ if $x\geq 0$). 
The Poisson random variables in \eqref{ictildechicoupleR} and \eqref{ictildechicoupleL} then give the unmatched particles at one of the corresponding sites in either $\w$ or $\tw$. 
In addition, since the sites $x \in [\nu_0,\nu_1)$ in the environment $\w$ have no corresponding site in $\tw$ all of the particles in $[\nu_0,\nu_1)$ are also unmatched. 
Therefore, the total number of unmatched particles in either system is 
\[
 \mathfrak{U}_n = \sum_{x<0} | \tilde\chi_0(x) - \chi_0(\nu_0+x) | + \sum_{x\geq 0} | \tilde\chi_0(x) - \chi_0(\nu_1+x) |  + \sum_{x \in [\nu_0,\nu_1)} \chi_0(x),
\]
which is a Poisson random variable with mean that is bounded above by
\begin{align*}
&\sum_{x<0} \left|g_{\tw}(x)u\left(\tfrac{x}{n}\right) - g_\w(\nu_0+x)u\left(\tfrac{\nu_0+x}{n}\right) \right|  + \sum_{x\geq 0} g_{\tw}(x)\left|u\left(\tfrac{x}{n}\right) - u\left(\tfrac{\nu_1+x}{n}\right) \right| + \sum_{x=\nu_0}^{\nu_1-1} g_\w(x) u\left(\tfrac{x}{n}\right) \\
&\quad \leq \sum_{x<0} g_{\tw}(x) \left|u\left(\tfrac{x}{n}\right) - u\left(\tfrac{\nu_0+x}{n}\right) \right| + \sum_{x\geq 0} g_{\tw}(x)\left|u\left(\tfrac{x}{n}\right) - u\left(\tfrac{\nu_1+x}{n}\right) \right| \\
&\quad\qquad + \sum_{x<0} \left|g_{\tw}(x) - g_\w(\nu_0+x)\right|u\left(\tfrac{\nu_0+x}{n}\right) + \sum_{x=\nu_0}^{\nu_1-1} g_\w(x) u\left(\tfrac{x}{n}\right) \\
&\quad \leq \Delta\left(u; \tfrac{\nu_1 - \nu_0}{n} \right) \sum_{|x|\leq Ln} g_{\tilde\w}(x) 
 + \|u\|_\infty \left\{ \sum_{x<0} |g_{\tilde\w}(x) - g_\w(\nu_0+x)| +  \sum_{x =\nu_0}^{\nu_1-1}  g_\w(x) \right\},
\end{align*}
We claim that the number of unmatched particles is negligible on the scale of the hydrodynamic limit. That is, we will show that $n^{-1/\k} \mathfrak{U}_n$ converges to 0 in probability (under the averaged measure). Indeed, for any $\e>0$ and $A<\infty$, 
\begin{equation}\label{unmatchedub}
\begin{split}
 E_{\mathcal{P}}\left[ P_{(\w,\tilde\w)}^{\mu_u^n(\w),\mu_u^n(\tilde\w)} \left( \mathfrak{U}_N \geq \e n^{1/\k} \right) \right] 
&\leq \mathcal{P}\left( \nu_1- \nu_0 \geq \sqrt{n} \right) + \mathcal{P}\left( \sum_{|x| \leq Ln} g_{\tilde\w}(x) \geq A n^{1/\k} \right) \\
&\quad + \mathcal{P}\left( \sum_{x<0} |g_{\tilde\w}(x) - g_\w(\nu_0+x)| + \sum_{x=\nu_0}^{\nu_1-1} g_\w(x)  \geq  n \right) \\
&\quad + \Pv\left(\text{Poisson}\left( \Delta\left(u;\tfrac{1}{\sqrt{n}} \right) A n^{1/\k} + \|u_0\|_\infty n \right) \geq \e n^{1/\k} \right), 
\end{split}
\end{equation}
where in the last line we use the notation Poisson($\mu$) to denote a Poisson random variable with mean $\mu$. 
Since the random variables in the first and third probabilities on the right do not depend on $n$ (and are finite by Lemma \ref{PQcouplediff}), clearly these probabilities vanish as $n\ra\infty$. 
Also, since $\Delta(u;1/\sqrt{n}) \ra 0$ as $n\ra\infty$, the Poisson random variable in the fourth probability has a mean that is $o(n^{1/\k})$. Thus, the fourth probability also vanishes as $n\ra\infty$ for any choice of $A<\infty$ and $\e>0$.
For the second probability on the right above, we claim that 
\begin{equation}\label{gsum}
 \lim_{A\ra\infty} \limsup_{n\ra\infty} Q\left( \sum_{|x| \leq Ln} g_{\w}(x) \geq A n^{1/\k} \right)  = 0.
\end{equation}
(Note that since the coupling measure $\mathcal{P}$ for $(\w,\tw)$ has marginal $Q$ on $\tw$, the probability in \eqref{gsum} is the same as the second probability on the right in \eqref{unmatchedub}.)
This can be proved using results from \cite{dgWQL}, but we will give a proof using the techniques of the current paper instead. 
To this end, first note that 
\begin{align*}
 \frac{1}{n^{1/\k}} \sum_{|x|\leq Ln} g_{\w}(x) &= \frac{1}{n^{1/\k}} \sum_{|x|\leq Ln} \sum_{k: \, x < \nu_{k+1}} b_{x,k} \\
&\leq \frac{1}{n^{1/\k}} \sum_{|k|\leq Ln+1} \sum_{x < \nu_{k+1}} b_{x,k} + \frac{1}{n^{1/\k}} \sum_{k>Ln+1} \sum_{x\leq Ln} b_{x,k} \\
&\leq \frac{1}{n^{1/\k}} \sum_{|k|\leq Ln+1} \b_k + \frac{1}{n^{1/\k}} \sum_{k>Ln+1} \sum_{x\leq Ln} b_{x,k}.
\end{align*}
Under the measure $Q$, the first sum in the last line converges in distribution to a $\k$-stable random variable by Corollary \ref{cor:Wnweakconv} and  \eqref{bxksumsep} implies that the last sum converges to 0 in $Q$-probability, and from this \eqref{gsum} follows easily. 

We have shown that the initial configurations can be coupled in such a way so that the number of unmatched particles in the two systems is negligible on the hydrodynamic scale. It remains to show that for the matched particles, we can couple the random walks in the different environments so that the difference between the particles remains small enough (in fact we will show that the difference remains finite for each pair of coupled walks). 
We begin by introducing some notation. We will use $X_\cdot^{x,j}$ to denote the path of the $j$-th random walk started at location $x$ in the environment $\w$ and similarly $\tilde{X}_\cdot^{x,j}$ will denote the path of the $j$-th random walk started at $x$ in the environment $\tw$. 
For any $x \in \Z$ and $j \leq \gamma(x)$ we will attempt to couple $\tilde{X}_n^{x,j}$ with $X_n^{\nu_0+x,j}$ or $X_n^{\nu_1+x,j}$ depending on whether $x<0$ or $x\geq 0$, respectively. 
For convenience of notation, we will let
\[
 \bar{X}_n^{x,j} = \begin{cases} X_n^{\nu_0+x,j} &  \text{if } x< 0 \\ X_n^{\nu_1+x,j} &  \text{if } x\geq 0, \end{cases}
\]
so that our coupling will be for $\tilde{X}_n^{x,j}$ and $\bar{X}_n^{x,j}$. 
Given this notation, our goal will be to construct a coupling of the walks so that 
\begin{equation}\label{bXtXsystems}
 \lim_{n\ra\infty} E_{\mathcal{P}}\left[ P_{\w,\tw}^{\mu_u^n(\w),\mu_u^n(\tw)}\left( \sup_{t\leq T} \left| \sum_x \sum_{j=1}^{\gamma(x)} \phi\left(t, \frac{\bar{X}_{tn^{1/\k}}^{x,j}}{n} \right) - \phi\left(t, \frac{\tilde{X}_{tn^{1/\k}}^{x,j}}{n} \right) \right| \geq \e n^{1/\k} \right) \right] = 0, 
\end{equation}
for any $\e>0$. 

We now describe the coupling of $\tilde{X}_n^{x,j}$ and $\bar{X}_n^{x,j}$. We will couple the random walks so that on the $m$-th visit to corresponding sites, they both move in the same way. 
To make this precise, let $\mathbf{U} = \{U_{y,m}^{x,j}\}_{x,y\in\Z,m,j\geq 1}$ be an i.i.d.\ family of $U(0,1)$ random variables. Then, given the environment $\w$ and the family $\mathbf{U}$ of uniform random variables we construct the random walks in environment $\w$ as follows. 
\[
X_{n+1}^{x,j} = X_n^{x,j} + 2 \ind{U_{y,m}^{x,j} \leq \w_y} - 1 \qquad \text{ if } X_n^{x,j} = y \text{ and } \#\{i \leq n: X_i^{x,j} = y \} = m.
\]
To construct the coupled random walks in the environment $\tilde\w$ we will give a coupled family of uniform random variables $\tilde{U} = \{ \tilde{U}_{y,m}^{x,j} \}_{x,y \in \Z, m,j \geq 1}$ by letting 
\[
 \tilde{U}_{y,m}^{x,j} = U_{\tilde y, m}^{\tilde x,j}, \quad \text{where} \quad 
\tilde x 
=\begin{cases}
  \nu_0(\w) + x &  \text{if } x < 0 \\
  \nu_1(\w) + x &  \text{if } x \geq 0,
\end{cases}
\quad\text{and}\quad
\tilde y 
=\begin{cases}
  \nu_0(\w) + y &  \text{if } y < 0 \\
  \nu_1(\w) + y &  \text{if } y \geq 0
 \end{cases}.
\]
Then, given the environment $\tilde\w$ and the family $\mathbf{\tilde{U}}$ of uniform random variables, the random walks in environment $\tilde\w$ are constructed by letting
\[
\tilde X_{n+1}^{x,j} = \tilde X_n^{x,j} + 2 \ind{\tilde U_{y,m}^{x,j} \leq \tilde\w_y} - 1 \qquad \text{ if } \tilde X_n^{x,j} = y \text{ and } \#\{i\leq n: \tilde X_i^{x,j} = y \} = m.
\]
Given the above coupling of the random walks, it is easy to see that the steps of the random walks are identical up until the first time the random walk in environment $\w$ reaches the interval $[\nu_0,\nu_1)$ (or equivalently when the random walk in $\tilde\w$ crosses the edge between $-1$ and $0$ for the first time) and that upon each exit of this interval to the right the steps of the random walk in $\w$ match the steps of the random walk in $\tilde\w$ during excursions to the right of $0$. 
Thus, the difference in position of the two walks is bounded by
their initial difference $|\bar{X}_0^{x,j} - \tilde{X}_0^{x,j}| \leq \nu_1-\nu_0$ plus
 the amount of time the walk in $\w$ spends to the left of $\nu_1$ after first reaching $[\nu_0,\nu_1)$ and the amount of time the random walk in $\tilde\w$ spends to the left of the origin after first reaching the origin. 
Note that since the random walks are transient to the right, for any $x \in \Z$ and $j\geq 1$ these quantities are stochastically dominated by the random variable $\mathfrak{D}$ defined by 
\begin{equation}\label{Vdef}
 \mathfrak D = \nu_1 - \nu_0 + \sum_{n=0}^\infty \ind{ X_n^{\nu_0} < \nu_1 } + \sum_{n=0}^\infty \ind{\tilde{X}_n^0 < 0}. 
\end{equation}
Therefore, we can expand the measure $P_{\w,\tw}^{\mu_u^n(\w),\mu_u^n(\tw)}$ to include an i.i.d.\ family of random variables $\{\mathfrak{D}_{x,j}\}_{x\in \Z,\, j\geq 1}$ all with the same distribution a $\mathfrak{D}$ in \eqref{Vdef} and such that $\sup_n |\bar{X}_n^{x,j} - \tilde{X}_n^{x,j}| \leq \mathfrak{D}_{x,j}$ for every $x \in \Z$ and $j\geq 1$. 
Thus, we can conclude that 
\begin{align}
& P_{\w,\tw}^{\mu_u^n(\w),\mu_u^n(\tw)}\left( \sup_{t\leq T} \left| \sum_x \sum_{j=1}^{\gamma(x)} \phi\left(t, \frac{\bar{X}_{tn^{1/\k}}^{x,j}}{n} \right) - \phi\left(t, \frac{\tilde{X}_{tn^{1/\k}}^{x,j}}{n} \right) \right| \geq \e n^{1/\k} \right) \nonumber \\
&\leq P_{\w,\tw}^{\mu_u^n(\w),\mu_u^n(\tw)}\left( \sum_x \sum_{j=1}^{\gamma(x)} \Delta\left(\phi; \frac{\mathfrak{D}_{x,j}}{n} \right) \geq \e n^{1/\k} \right) \nonumber \\
&\leq \frac{\|u\|_\infty E_{\w,\w'}\left[ \Delta\left(\phi; \frac{\mathfrak{D}}{n} \right) \right]}{\e n^{1/\k}} \sum_{x=-Ln}^{Ln} g_{\tw}(x). \label{Dgsum}
\end{align}
Since $\Delta\left(\phi; \frac{\mathfrak{D}}{n} \right)$ is bounded and converges to 0 almost surely under the averaged measure $E_{\mathcal{P}}[P_{\w,\tw}(\cdot)]$, it is easy to see that 
$E_{\w,\w'}\left[ \Delta\left(\phi; \frac{\mathfrak{D}}{n} \right) \right]$ converges to 0 in $\mathcal{P}$-probability. 
Together with \eqref{gsum} this implies that \eqref{Dgsum} converges to 0 in $\mathcal{P}$-probability, and this in turn implies \eqref{bXtXsystems}. 
\end{proof}

\appendix

\section{A PDE characterization \texorpdfstring{of $u_W$}{}}\label{app:uWunique}

As mentioned in Remark \ref{rem:uWPDE}, hydrodynamic limits are often described via a solution of some partial differential equation. 
However, in our proof of the hydrodynamic limits in Theorem \ref{th:RWREhdl} and \ref{th:dthdl} we defined the function $u_W(t,x)$ probabilistically instead of as a solution to some PDE. 
We showed certain differentiability properties of this function $u_W(t,x)$ in Proposition \ref{prop:duWdt}, but it is not clear that these differentiability properties uniquely characterize the function $u_W(t,x)$. 
Indeed, if we only require that \eqref{duWdteq1} holds (differentiable in $t$ with time derivative equal to the negative of the \emph{left} spatial derivative with respect to $\s_W$), then it is easily seen that 
the function $v(t,x) \equiv u(x)$ for all $t\geq 0$ satisfies \eqref{duWdteq1}. However, the function $v(t,x)$ is different from $u_W(t,x)$ since $v$ does not satisfy the second differentiability property \eqref{duWdteq2} (time derivative equal to the negative of the \emph{right} spatial derivative with respect to $\s_W$). 
We suspect that the differentiability properties \eqref{duWdteq1} and \eqref{duWdteq2} together do uniquely characterize the function $u_W(t,x)$, but we are currently not able to prove this. 
Instead, we will give a slightly different characterization of the function $u_W$ in the case when $u$ is a function of bounded variation.

\begin{defn}
 A function $f$ is of bounded variation on $[a,b]$ if 
 \[
  V_{[a,b]}(f) = \sup_{a=x_0<x_1<\ldots<x_n=b} \sum_{i=1}^n |f(x_i) - f(x_{i-1})| < \infty. 
 \]
(The supremum in the above definition is taken over all finite partitions of $[a,b]$.)
The total variation of a function on all of $\R$ is 
\[
 V(f) = \lim_{\substack{a\ra -\infty\\b\ra\infty}} V_{[a,b]}(f). 
\]
The collection of all functions of bounded variation on $\R$ will be denoted by 
\[
 \mathrm{BV}(\R) = \{f: V(f) < \infty\}. 
\]
\end{defn}

\begin{lem}\label{lem:uWprops}
 If $W \in \mathcal{T}'$ and $u \in \mathcal{C}_0^+ \cap \mathrm{BV}(\R)$, then $u_W(t,x)$ is the unique function with the following properties. 
 \begin{itemize}
  \item $u_W(0,x) = u(x)$ for all $x\in \R$. 
  \item $\sup_{t\geq 0, \, x \in \R} |u_W(s,x)| < \infty.$
  \item For any fixed $t>0$, the function $x \mapsto u_W(t,x)$ is right-continuous with left limits and also of bounded variation on $\R$. 
Moreover, the measure $u_W(t,dx)$ is absolutely continuous with respect to $\s_W(dx)$. 
  \item For any fixed $x_k \in \mathcal{J}_W$, $u_W(t,x_k)$ is differentiable with respect to $t$, 
with 
\begin{equation}\label{duWdsW}
 - \frac{\del}{\del t} u_W(t,x_k) =
\begin{cases}
 0 & \text{if } t = 0 \\
\frac{du_W(t,\cdot)}{d\s_W}(x_k) & \text{if } t > 0,
\end{cases}
\quad 
\end{equation}
and such that 
\begin{equation} \label{intdudt}
\sup_{t\geq 0} \int_\R \left| \frac{\del}{\del t} u_W(t,x) \right| \, \s_W(dx) < \infty. 
\end{equation}
\end{itemize}
\end{lem}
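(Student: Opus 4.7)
The plan is to treat existence and uniqueness separately. Existence reduces to verifying that $u_W$ as defined in \eqref{uWdef} satisfies the four listed properties. For uniqueness I will establish a Kolmogorov-backward representation $v(t,x) = \Ev[v(0, Z_W^*(t;x))]$ for any candidate $v$ via an It\^o/martingale argument, which immediately gives $v\equiv u_W$ because $v(0,\cdot)\equiv u$.

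For existence, properties (i) and (ii) are immediate since $W\in\mathcal T'$ implies $Z_W^*(0;x)=x$ and $|u_W(t,x)|\le\|u\|_\infty$, and right-continuity with left limits of $u_W(t,\cdot)$ is Lemma~\ref{lem:uWcont}. For the bounded-variation statement I would couple the family $\{Z_W^*(t;x)\}_{x \in \R}$ on a common probability space using the same exponential clocks $\{\zeta_k\}$; monotonicity of $x \mapsto Z_W^*(t;x)$ together with $|u(a)-u(b)|\le V_{[a,b]}(u)$ for $a\le b$ then yields
\[
\sum_{i=1}^n |u_W(t,x_i)-u_W(t,x_{i-1})| \le \Ev\bigl[V_{[Z_W^*(t;x_0),\, Z_W^*(t;x_n)]}(u)\bigr] \le V(u)
\]
for any partition $x_0<\cdots<x_n$, so $V(u_W(t,\cdot))\le V(u)$. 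Absolute continuity of $u_W(t,dx)$ with respect to $\s_W$ reduces to showing $u_W(t,\cdot)$ is a pure jump BV function with jumps only on $\mathcal{J}_W$. Lemma~\ref{lem:uWc} and Remark~\ref{rem:uWuWc} already give continuity of $u_W(t,\cdot)$ at every $x\notin\mathcal{J}_W$, and to rule out a continuous-singular component I would approximate $W$ by its truncations $W^{(\e)}$, whose atoms are isolated so that $u_{W^{(\e)}}(t,\cdot)$ is literally a step function; the coupling estimates from Section~\ref{sec:dthdl} combined with lower semicontinuity of the total variation will then force $V(u_W(t,\cdot))=\sum_k|u_W(t,x_k)-u_W(t,x_k-)|$ in the $\e\to 0$ limit. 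Property (iv) is then a packaging step: Proposition~\ref{prop:duWdt} together with $u_W^\circ(t,x_k)=u_W(t,x_k-)$ from Lemma~\ref{lem:uWc} gives $\partial_t u_W(t,x_k)=-(u_W(t,x_k)-u_W(t,x_k-))/y_k$, which equals $-\tfrac{du_W(t,\cdot)}{d\s_W}(x_k)$ because the purely atomic measure $u_W(t,dx)$ has precisely this Radon--Nikodym derivative (at $t=0$ the right-hand side vanishes since $u$ is continuous), and the integral bound \eqref{intdudt} reduces to $\sum_k |u_W(t,x_k)-u_W(t,x_k-)|=V(u_W(t,\cdot))\le V(u)<\infty$.

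For uniqueness, fix $T>0$ and $x\in\R$ and consider $M_s := v(T-s,\,Z_W^*(s;x))$ for $s\in[0,T]$. With $(\mathcal{L}f)(x_k) := (f(x_k-)-f(x_k))/y_k$ denoting the infinitesimal generator of the trap process at $x_k\in\mathcal{J}_W$, properties (iii)--(iv) of $v$ combine to give the ODE $\partial_t v(t,x_k)=(\mathcal{L}v(t,\cdot))(x_k)$, which is exactly the condition for the finite-variation part of an It\^o-type decomposition of $M_s$ to vanish; boundedness of $v$ then upgrades the resulting local martingale to a true martingale, yielding $v(T,x)=\Ev[M_0]=\Ev[M_T]=\Ev[u(Z_W^*(T;x))]=u_W(T,x)$. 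The main obstacle will be making this It\^o calculation rigorous for $W\in\mathcal T'$, where $\mathcal{J}_W$ is dense and $\mathcal L$ is defined through the left-limit $f(x_k-)$ rather than any well-defined ``next atom to the left.'' I plan to handle this by first running the martingale argument for the truncated environments $W^{(\e)}$, where $Z_{W^{(\e)}}^*$ is a standard continuous-time Markov chain on isolated atoms and the It\^o formula applies directly, and then passing to the $\e\to 0$ limit using the couplings of Section~\ref{sec:dthdl} together with the uniform integral bound \eqref{intdudt}, which dominates the compensators in a $W^{(\e)}$-uniform way.
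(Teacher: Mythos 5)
Your existence argument follows essentially the same track as the paper's for properties (i), (ii), and for the bound $V(u_W(t,\cdot))\le V(u)$: coupling the family $\{Z_W^*(t;x)\}_x$ through shared exponential clocks and using monotonicity in $x$ is exactly the mechanism behind the paper's estimate \eqref{uWbv}. The two real departures are in the absolute-continuity step and in the uniqueness argument.

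For absolute continuity there is a gap. Lower semicontinuity of total variation gives $V(u_W(t,\cdot))\le\liminf_\e V(u_{W^{(\e)}}(t,\cdot))$, but this is an inequality in the unhelpful direction: the right side is the truncated variation, and the jump of $u_{W^{(\e)}}(t,\cdot)$ at an $\e$-large atom $x_k$ is $u_{W^{(\e)}}(t,x_k)-u_{W^{(\e)}}(t,x_{k'})$ for the previous $\e$-large atom $x_{k'}$, which aggregates all the intermediate small jumps of $u_W$ with possible cancellations. Nothing in the coupling estimates of Section~\ref{sec:dthdl} forces $\liminf_\e V(u_{W^{(\e)}}(t,\cdot))$ to agree with the jump sum $\sum_k |u_W(t,x_k)-u_W(t,x_k-)|$, so you do not obtain the desired identity $V(u_W(t,\cdot))=\sum_k|u_W(t,x_k)-u_W(t,x_k-)|$ this way. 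The paper closes this step directly with the telescoping identity
\[
 u_W(t,b)-u_W(t,a) \;=\; \sum_{x_k\in(a,b]}\bigl(u_W(t,x_k)-u_W(t,x_k-)\bigr),
\]
obtained by writing $u(Z_W^*(t;b))-u(Z_W^*(t;a))$ as a sum over the disjoint intervals $(Z_W^\circ(t;x_k),Z_W^*(t;x_k)]$, using that $u\in\mathcal C_0^+\cap\mathrm{BV}$ so the measure $du$ is atomless and charges no mass on the countably many uncovered points, and then interchanging the sum and expectation via \eqref{uWbv}. From that identity absolute continuity and the pure-jump structure both follow immediately; this is the ingredient your outline is missing.

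For uniqueness your martingale ansatz $M_s=v(T-s,Z_W^*(s;x))$ is the pathwise counterpart of the duality the paper uses, and you correctly flag the real obstacle: for $W\in\mathcal T'$ the process $Z_W^*$ is not a jump chain, so there is no off-the-shelf It\^o formula or compensator identification, and the truncation-and-limit route doubles the amount of limit-passing. The paper instead fixes $g\in\mathcal C_0^+\cap\mathrm{BV}$ and $t>0$, sets $f(s,x)=\Ev[g(Z_W(t-s;x))]$ using the \emph{forward} trap process, differentiates $s\mapsto\int u(s,x)f(s,x)\,\s_W(dx)$ in $s$, applies Fubini (justified by \eqref{intdudt} and its analogue for $f$), and observes that for each fixed $s$ the spatial integral
\[
 \int_\R f(s,x)\,u(s,dx) + \int_\R u(s,x)\,f(s,dx)
\]
vanishes by Stieltjes integration by parts because $u(s,\cdot)$ is right-continuous and $f(s,\cdot)$ is left-continuous, both BV. This yields $\int u(t,x)g(x)\,\s_W(dx)=\int u(x)f(0,x)\,\s_W(dx)$ for every admissible $g$, which pins down $u=u_W$ on the dense set $\mathcal J_W$. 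Your martingale route would eventually work but buys probabilistic intuition at the cost of a delicate It\^o formula for a nonstandard process, whereas the paper's weak formulation needs only the product rule and integration by parts, both already justified by the regularity established in the existence half of the lemma.
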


\begin{rem}
 Note that \eqref{duWdsW} is equivalent to the statement that 
 \[
  u_W(t,b) - u_W(t,a) = \int_{(a,b]}  -\frac{\del}{\del t}u_W(t,x) \, \s_W(dx), \quad \forall t>0 \text{ and } a < b. 
 \]
\end{rem}

\begin{proof}
 We begin by proving that the function $u_W(t,x)$ has the properties claimed in the statement of the lemma. 
The first two properties are immediate consequences of the definition \eqref{uWdef}. 
 We've already shown in Section \ref{sec:uW} that $u_W(t,x)$ is right-continuous with left limits in $x$ and is differentiable in $t$ whenever $x \in \mathcal{J}_W$ with 
 \begin{equation}\label{pointdiff}
  \frac{\del}{\del t} u_W(t,x_k) = \frac{u_W(t,x_k-) - u_W(t,x_k)}{y_k}, \quad \forall x_k \in \mathcal{J}_W. 
 \end{equation}
Note that if we can show that $u_W(t,dx)$ is absolutely continuous with respect to $\s_W$, then \eqref{duWdsW} clearly follows from \eqref{pointdiff}. 
Thus, it remains to show \eqref{intdudt} and that $u_W(t,dx)$ is absolutely continuous with respect to $\s_W$. 
To prove \eqref{intdudt} we will show that 
 \begin{equation}\label{uWbv}
\int_\R \left| \frac{\del}{\del t} u_W(t,x) \right| \s_W(dx) = 
 \sum_k |u_W(t,x_k) - u_W(t,x_k-)| \leq V(u) < \infty, \quad \forall t\geq 0. 
 \end{equation}
The first equality follows from \eqref{pointdiff}. To prove the inequality in \eqref{uWbv}
recall from Lemma \ref{lem:uWc} that $u_W(t,x_k-) = u_W^\circ(t,x_k) = \Ev[ u(Z_W^\circ(t;x_k)) ]$ and thus
\begin{align*}
\sum_k |u_W(t,x_k) - u_W(t,x_k-)|
 &\leq \sum_k \Ev\left| u(Z_W^*(t;x_k)) - u(Z_W^\circ(t;x_k)) \right| \\
 &= \Ev\left[ \sum_k \left| u(Z_W^*(t;x_k)) - u(Z_W^\circ(t;x_k)) \right| \right],
\end{align*}
and since 
$Z_W^*(t;x_\ell) \leq Z_W^\circ(t;x_k) \leq Z_W^*(t;x_k)$ for all $x_\ell < x_k$, 
the sum inside the expectation in the last line is always bounded by $V(u)$.  
We claim that a further consequence of \eqref{uWbv} is that
\begin{equation}\label{rndsum}
  u_W(t,b) - u_W(t,a) 
  = \sum_{x_k \in (a,b]} \left( u_W(t,x_k) - u_W(t,x_k-) \right)
  \quad \forall t>0, \text{ and } a < b. 
\end{equation}
To see this, we first note that 
\begin{align*}
 u_W(t,b) - u_W(t,a) 
 &= \Ev\left[ u(Z_W^*(t;b)) - u(Z_W^*(t;a)) \right] \\
 &= \Ev\left[ \sum_{x_k \in (a,b]} u(Z_W^*(t;x_k)) - u(Z_W^\circ(t;x_k)) \right],
\end{align*}
where the last equality above is justified by the fact that $u$ is a function of bounded variation  and
that the half-open intervals $\left( Z_W^\circ(t;x_k),Z_W^*(t;x_k) \right] $ with $x_k \in (a,b]$ are disjoint and cover all but countably many points in $\left( Z_W^*(t;a),Z_W^*(t;b) \right]$. 
Due to \eqref{uWbv}, we can interchange the expectation and summation in the last line above to obtain \eqref{rndsum}. 
Combining \eqref{uWbv} and \eqref{rndsum}, we see that $u_W(t,\cdot)$ is of bounded variation with $V(u_W(t,\cdot)) \leq V(u)$ for all $t > 0$. 
Since $u_W(t,\cdot)$ is a function of bounded variation, the finite Borel measure $u_W(t,dx)$ is well defined. \eqref{rndsum} shows that this measure agrees with a measure concentrated on $\mathcal{J}_W$ for all subsets of the form $(a,b]$. Since the half-open intervals $(a,b]$ uniquely determine the Borel measures we can conclude that $u_W(t,dx)$ is absolutely continuous with respect to $\s_W(dx)$. 

Having shown that $u_W(t,x)$ satisfies all of the claimed properties in the statement of the Lemma, we now turn to the proof of the uniqueness. 
To this end, we first note that for any $g \in \mathcal{C}_0^+ \cap \text{BV}(\R)$ and any $t>0$ we can define a function $f$ on $[0,t]\times \R$ by 
\begin{equation}\label{fdef}
 f(s,x) = \Ev\left[ g( Z_W(t-s; x) ) \right], \qquad s \in [0,t], \, x \in \R. 
\end{equation}
Similarly to the above argument that $u_W$ satisfies the properties stated in the lemma, it can be shown that the function $f$ has the following properties. 
\begin{itemize}
 \item $f(t,x) = g(x)$ for all $x\in \R$. 
 \item $f$ is uniformly bounded. That is, $\sup_{x\in \R, \, s \in [0,t]} |f(s,x)| < \infty$. 
 \item For any $s\in [0,t)$ the function $x\mapsto f(s,x)$ is left-continuous with right limits and of bounded variation on $\R$. Moreover, the measure $f(s,dx)$ is absolutely continuous with respect to $\s_W(dx)$.  
 \item For any fixed $x_k \in \mathcal{J}_W$, $f(s,x_k)$ is differentiable with respect to $s \in [0,t]$, with 
\begin{equation}\label{dfds}
 -\frac{\del}{\del s} f(s,x_k) =
\begin{cases}
 \frac{df(s,\cdot)}{d\s_W}(x_k) & \text{if } s \in [0,t) \\
 0 & \text{if } s = t, 
\end{cases}
\end{equation}
and such that 
\begin{equation}\label{intdfdt}
 \sup_{s \in [0,t]} \int_\R \left| \frac{\del}{\del s} f(s,x) \right| \s_W(dx) < \infty. 
\end{equation}
\end{itemize}

Now, suppose that $u(t,x)$ is another function satisfying all of the properties in the statement of Lemma \ref{lem:uWprops}. 
Let $g \in \mathcal{C}_0^+ \cap \text{BV}(\R)$ and $t>0$ be fixed and let $f$ be defined as in \eqref{fdef}. Then,
\begin{align*}
& \int_\R u(t,x) g(x) \s_W(dx) = \int_\R u(t,x) f(t,x) \s_W(dx) \\
&= \int_\R u(0,x) f(0,x) \s_W(dx) + \int_\R \int_0^t \frac{\del}{\del s} \left( u(s,x)f(s,x) \right) ds \, \s_W(dx) \\
&= \int_\R u(x) f(0,x) \s_W(dx) + \int_\R \int_0^t \left( f(s,x) \frac{\del}{\del s} u(s,x) + u(s,x) \frac{\del}{\del s} f(s,x) \right) ds \, \s_W(dx) \\
&= \int_\R u(x) f(0,x) \s_W(dx) +  \int_0^t \int_\R \left( f(s,x) \frac{\del}{\del s} u(s,x) + u(s,x) \frac{\del}{\del s} f(s,x) \right) \s_W(dx) \, ds,  
\end{align*}
where the application of Fubini's Theorem in the last equality is justified by \eqref{intdudt}, \eqref{intdfdt} and the boundedness of $u$ and $f$. 
To simplify the double integral in the last line above, note that for any $s \in (0,t)$ it follows from \eqref{duWdsW} and \eqref{dfds} that 
\begin{align*}
& \int_\R \left( f(s,x) \frac{\del}{\del s} u(s,x) + u(s,x) \frac{\del}{\del s} f(s,x) \right) \s_W(dx) \\
&\qquad = \int_\R f(s,x) u(s,dx) + \int_\R u(s,x) f(s,dx) = 0,
\end{align*}
where the last equality follows from integration by parts since $u$ and $f$ are both of bounded variation, $u(s,\cdot)$ is right continuous and $f(s,\cdot)$ is left continuous. We have shown that 
\[
 \int_\R u(t,x) g(x) \s_W(dx) = \int_\R u(x) f(0,x) \s_W(dx), \qquad \forall g \in \mathcal{C}_0^+ \cap \text{BV}(\R), \, t > 0. 
\]
Since this is also true for the function $u_W(t,x)$, we can conclude that $u(t,x_k) = u_W(t,x_k)$ for all $t>0$ and $x_k \in \mathcal{J}_W$. However, since $W \in \mathcal{T}'$ then $\mathcal{J}_W$ is dense in $\R$ and since $u$ and $u_W$ are both right continuous in $x$ it follows that $u(t,x) = u_W(t,x)$ for all $t\geq 0$ and all $x\in \R$. 
\end{proof}

\section{Weak convergence of the random trap environments}\label{app:Wnweakconv}

In this appendix, we will give the proof of the Poissonian limit of the trap structure as stated in Corollary \ref{cor:Wnweakconv} as well as the proofs of Corollaries \ref{cor:bstable} and \ref{cor:tBstable}. 
All of these Corollaries are a consequence of Lemma \ref{lem:barWnconv}. 


\subsection{Poissonian limit for the trap structure}

\begin{proof}[Proof of Corollary \ref{cor:Wnweakconv}]
We begin by showing that $W_n$ converges in distribution to $W$.
Since Lemma \ref{lem:barWnconv} implies that $\overline{W}_n$ converges in distribution to $W$, 
we claim that it will be enough to show that 
\begin{equation}\label{WnbWn}
  \lim_{n\ra\infty} Q\left( \left| \langle \phi, W_n \rangle - \langle \phi, \overline{W}_n \rangle \right| \geq \d \right) = 0, \quad \forall \d>0, \, \phi \in \mathcal{C}_0^+(\R \times (0,\infty] ). 
\end{equation}
To see that \eqref{WnbWn} is sufficient, note that $\mathcal{M}_p$ is a Polish space under a metric $d_{\text{vague}}$ which is given by 
\[
 d_{\text{vague}}(M,M') = \sum_{k=1}^\infty 2^{-k} \left( 1- e^{-|\langle \phi_k, M\rangle - \langle \phi_k, M' \rangle|} \right), \quad \forall M,M' \in \mathcal{M}_p,
\]
where $\{\phi_k\}_{k\geq 1}$ is a certain fixed sequence of continuous functions with compact support. 
Clearly \eqref{WnbWn} then implies that 
\[
 \lim_{n\ra\infty} Q\left( d_{\text{vague}}( W_n, \overline{W}_n ) \geq \d \right) = 0, \quad \forall \d>0, 
\]
from which it follows that $W_n$ has the same limiting distribution as $\overline{W}_n$. 

To prove \eqref{WnbWn}, first note that 
\[
   \left| \langle \phi, W_n \rangle - \langle \phi, \overline{W}_n \rangle \right| \leq  \sum_k \left| \phi\left( \frac{\nu_k}{n},\frac{\b_k}{n^{1/\k}} \right) - \phi\left( \frac{k \bar\nu}{n},\frac{\b_k}{n^{1/\k}} \right) \right|.
\]
Now, suppose that $\phi \in \mathcal{C}_0^+(\R \times (0,\infty] )$ has support contained in $[-L,L]\times [\e,\infty]$. 
Since $\phi$ is uniformly continuous, for any $\e'>0$ there exists a $\d'>0$ such that $|\phi(x,y)-\phi(x',y)| < \e'$ if 
$|x-x'|< \d'$. 
Therefore, we can conclude that 
\begin{align*}
 Q\left( \left| \langle \phi, W_n \rangle - \langle \phi, \overline{W}_n \rangle \right| \geq \d \right) 
&\leq Q\left( \max_{|k|\leq Ln} |\nu_k - k \bar\nu| \geq \d' n \right) + Q\left( \sum_{k=-Ln}^{Ln} \ind{\b_k \geq \e n^{1/\k}} \geq \frac{\d}{\e'} \right) \\
&\leq Q\left( \max_{|k|\leq Ln} |\nu_k - k \bar\nu| \geq \d' n \right) + \frac{(2Ln+1)\e'}{\d}Q( \b_1 \geq \e n^{1/\k} )
\end{align*}
As noted in \eqref{unifnuk} above, the first probability on the right vanishes as $n\ra\infty$ for any $\d'>0$. 
The tail decay of $\b_1$ implies that the second term on the right
vanishes as first $n\ra\infty$ and then $\e'\ra 0$. 
This completes the proof of \eqref{WnbWn}, and thus we can conclude that $W_n$ converges in distribution to $W$.

Next, recalling the notation 
$M^{(\e)} = M(\cdot \cap \R \times [\e,\infty])$
for any point process $M \in \mathcal{M}_p(\R \times \R_+)$, 
we claim that the mapping $M\mapsto \s_{M^{(\e)}}$ from $\mathcal{M}_p (\R \times \R_+) \ra D_\R^J$ is continuous 
on the set 
\[
 \mathcal{A}_\e = \{ M \in \mathcal{M}_p :\, M(\R \times \{\e,\infty\}) = 0 \}. 
\]
Indeed, let $M_n \ra M \in \mathcal{A}_\e$ and fix $L<\infty$ with the property that $M(\{-L,L\}\times (0,\infty]) = 0$ (since $M$ has only countably many atoms, there are only countably many $L$ for which this does not hold). Then for $n$ sufficiently large we can enumerate the point processes $M_n$ and $M$ so that
\[
 M_n(\cdot \cap [-L,L]\times [\e,\infty] ) = \sum_{k=1}^K \d_{(x_k^n,y_k^n)}, \quad\text{and}\quad M(\cdot \cap [-L,L]\times [\e,\infty] ) = \sum_{k=1}^K \d_{(x_k,y_k)},
\]
and 
\[
 \lim_{n\ra\infty} \max_{k\leq K} |x_k^n-x_k| \vee |y_k^n-y_k| = 0. 
\]
(Of course, in the above notation $K$ must be given by $K = M([-L,L]\times [\e,\infty]) < \infty$.) 
It follows that for $n$ sufficiently large we have that 
\[
 d_{[-L,L]}^{J_1}( \s_{M_n^{(\e)}}, \s_{M^{(\e)}} ) 
\leq \max \left\{ \max_{k\leq K} |y_k^n - y_k|,\, \sum_{k=1}^K |x_k^n - y_k^n|  \right\}.
\]
From this we can conclude that $d_{[-L,L]}^{J_1}( \s_{M_n^{(\e)}}, \s_{M^{(\e)}} ) \ra 0$ as $n\ra\infty$ for almost every $L<\infty$. This is enough to conclude that $\s_{M_n^{(\e)}}$ converges to $\s_{M^{(\e)}}$ in $D_\R^J$. 

Since the Poisson point process $W$ satisfies $\Pv( W \in \mathcal{A}_\e) = 1$, the continuous mapping theorem and the fact that $W_n$ converges in distribution to $W$ then imply that $(W_n,\s_{W_n^{(\e)}})$ converges in distribution to $(W,\s_{W^{(\e)}})$. 
Since
\[
 \sup_{|x|\leq L} |\s_{W^{(\e)}}(x) - \s_W(x)| \leq \int_{-L}^L \int_0^\e y \, W(dx \, dy) \underset{\e\ra 0}{\longrightarrow} 0,
\]
it follows that $\s_{W^{(\e)}} \ra \s_W$ in $D_\R^J$, and thus to conclude that $(W_n,\s_{W_n})$ converges in distribution to $(W,\s_W)$ we need only to show that 
\[
 \lim_{\e\ra 0} \limsup_{n\ra\infty} Q\left( \sup_{|x|\leq L} \left| \s_{W_n^{(\e)}}(x) - \s_{W_n}(x) \right| \geq \d \right) = 0, \quad \forall L< \infty, \, \d > 0.  
\]
Since  
\[
 \sup_{|x|\leq L} \left|\s_{W_n^{(\e)}}(x) - \s_{W_n}(x) \right| \leq \int_{-L}^L \int_0^\e y \, W_n(dx \, dy)
\leq \frac{1}{n^{1/\k}} \sum_{k=-Ln}^{Ln} \b_k \ind{\b_k \leq \e n^{1/\k}}, 
\]
this will follow from 
\begin{equation}\label{betatsum}
 \lim_{\e \ra 0} \limsup_{n\ra\infty} Q\left( \sum_{|k|\leq L n} \b_k \ind{\b_k < \e n^{1/\k}} \geq \d n^{1/\k} \right) = 0, \quad \forall L<\infty, \d>0. 
\end{equation}
However, 
\begin{align*}
 \limsup_{n\ra\infty} Q\left( \sum_{|k|\leq L n} \b_k \ind{\b_k < \e n^{1/\k}} \geq \d n^{1/\k} \right)
&\leq \limsup_{n\ra\infty} \frac{2Ln+1}{\d n^{1/\k}} E_Q\left[ \b_1 \ind{\b_1 < \e n^{1/\k}} \right]\\
&= \frac{2 L C_1 \k}{\d(1-\k)} \e^{1-\k},
\end{align*}
where the last equality follows from the tail decay of $\b_1$ in \eqref{bnutails}.
Since $\k \in (0,1)$, this is enough to prove \eqref{betatsum}. 
\end{proof}

\subsection{Stable limits in Corollaries \ref{cor:bstable} and \ref{cor:tBstable}}

\begin{proof}[Proof of Corollary \ref{cor:bstable}]
 It is known that if $W = \sum_k \d_{(x_k,y_k)}$ is a Poisson point process with intensity measure $\l y^{-\k-1} \, dx \, dy$, then $\langle y \ind{|x|\leq a}, W \rangle = \sum_k y_k \ind{|x_k|\leq a}$ is a $\k$-stable random variable for any $a>0$. 
Since 
\[
 \frac{1}{n^{1/\k}} \sum_{|k|\leq n} \b_k = \langle y \ind{|x|\leq \bar\nu}, \overline{W}_n \rangle,
\]
we would like to use Lemma \ref{lem:barWnconv} to conclude that $n^{-1/\k}\sum_{|k|\leq n} \b_k$ also converges in distribution to  $\langle y \ind{|x|\leq \bar\nu}, W \rangle $. 
Unfortunately, the mapping $M \mapsto \langle y \ind{|x|\leq \bar\nu}, M \rangle$ from $\mathcal{M}_p \ra \R$ is not a continuous mapping. 
However, the mapping 
\[
 M \mapsto \langle y \ind{|x|\leq \bar\nu, \, y \geq \e}, M \rangle
\]
is continuous on the set $\{M \in \mathcal{M}_p: \, M(\R \times \{\e\}) = M(\{-\bar\nu, \bar\nu \}\times (0,\infty]) = 0 \} $. 
Since for any $\e>0$ the Poisson point process $W$ belongs to this set with probability 1, we can conclude from Lemma \ref{lem:barWnconv} that 
\[
 \frac{1}{n^{1/\k}} \sum_{|k|\leq n} \b_k \ind{\b_k \geq \e n^{1/\k}} 
\Longrightarrow
\langle y \ind{|x|\leq \bar\nu, \, y \geq \e}, W \rangle, \quad \text{as } n\ra\infty,
\]
for any $\e>0$. 
The proof of the corollary then follows from \eqref{betatsum} and the fact that $$\langle y \ind{|x|\leq \bar\nu, \, y \geq \e}, W \rangle \underset{\e\ra 0}{\longrightarrow} \langle y \ind{|x|\leq \bar\nu}, W \rangle,$$ with probability one.
\end{proof}

\begin{proof}[Proof of Corollary \ref{cor:tBstable}]
Let $S_n(t) = n^{-1/\k} \tau_{\mathfrak{B}}([0,nt))$ be the hitting time process for the directed traps. 
We claim that both limiting distributions in the statement of the corollary will follow if we can show that the hitting time process
$\{S_n(t)\}_{t\geq 0}$ converges in distribution to a $\k$-stable subordinator $\{S(t)\}_{t\geq 0}$ on the space $D_{\R_+}^{J}$. 
First, since $\nu_n/n \ra \bar\nu$, $Q$-a.s., it follows that $n^{-1/\k} \tau_\mathfrak{B}([0,\nu_n))$ converges in distribution to $S(\bar\nu)$, which is a $\k$-stable random variable. 
Next, let $D_{u,\uparrow}^+ \subset D_{\R_+}$ be the set of non-decreasing c\`adl\`ag functions $x(t)$ with $x(0)\geq 0$ and $x(t) \ra \infty$ as $t\ra\infty$ and let $\mathfrak{I}:D_{u,\uparrow}^+ \mapsto D_{u,\uparrow}^+$ be the space-time inversion operator given by 
\[
 \mathfrak{I}x(t) = \sup\{ s\geq 0: \, x(s) \leq t \}, \quad t \geq 0, x \in D_{u,\uparrow}^+. 
\]
Note that the construction of the directed trap process $Z_{\mathfrak{B}}$ is such that $n^{-1} Z_\mathfrak{B}(t n^{1/\k}) = \mathfrak{I}S_n(t)$.
Since it is known that the operator $\mathfrak{I}$ is continuous on the subset $D_{u,\uparrow\uparrow}^+ \subset D_{u,\uparrow}^+$ of functions that are \emph{strictly} increasing \cite[Corollary 13.6.4]{wSPL}, 
the continuous mapping theorem then implies that
$\{ t \mapsto n^{-1} Z_\mathfrak{B}(t n^{1/\k}) \}$ converges in distribution to $\{\mathfrak{I} S(t) \}_{t\geq 0}$. 
Since the inverse of a $\k$-stable subordinator $\mathfrak{I}S(t)$ is almost surely a continuous process, we can conclude that this convergence in distribution is with respect to the uniform topology on $D_{\R_+}$.

It remains to prove the claimed convergence of the hitting time process $S_n$. 
Recall that the crossing time $\tau_{\mathcal{B}}([0,nt))$ for the directed trap process in the trap environment $\mathfrak{B}$ is given by 
\[
 \tau_{\mathfrak{B}}([0,nt)) = \sum_{k} \b_k \zeta_k \ind{\nu_k \in [0,nt)}, 
\]
where $\{\zeta_k\}_k$ is an i.i.d.\ sequence of Exp(1) random variables that is independent of $\mathfrak{B}$. 
We then construct the point process
\[
 \Gamma_n = \sum_{k \in \Z} \d_{(\frac{\nu_k}{n}, \frac{\b_k \zeta_k}{n^{1/\k}} )}
\]
which is obtained by multiplying the $y$-coordinate of the atoms of $W_n$ by the independent random variables $\zeta_k$. It then follows from Corollary \ref{cor:Wnweakconv} that $\Gamma_n$ converges in distribution to a Poisson point process $\Gamma$ with intensity measure $\l' y^{-\k-1} \, dx \, dy$, where $\l' = \l \Gamma(\k+1)$ with $\l$ as in Corollary \ref{cor:Wnweakconv}.
Now, note that $t\mapsto S(t) =  \langle y\ind{x \in [0,t)}, \Gamma \rangle$ is a $\k$-stable subordinator and
\[
 S_n(t) = \frac{1}{n^{1/\k}} \tau_{\mathfrak{B}}([0,nt)) = \frac{1}{n^{1/\k}} \sum_{k} \b_k \zeta_k \ind{\nu_k \in [0,nt)} = \langle y\ind{x \in [0,t)}, \Gamma_n \rangle.
\]
The mapping $M\mapsto \{ \langle y \ind{x \in [0,t)}, M \rangle \}_{t\geq 0}$ is not a continuous mapping from $\mathcal{M}_p$ to $D_{\R_+}^J$, 
but for any $\e>0$ the mapping $M\mapsto \{ \langle y \ind{x \in [0,t), \, y\geq \e}, M \rangle \}_{t\geq 0}$
is continuous on the set $\{M \in \mathcal{M}_p: M(\R\times \{\e\}) = 0\}$. 
Then, similarly to the proof of Corollary \ref{cor:bstable}, we will be able to deduce the convergence of $S_n$ to $S$ if we can show that 
\begin{equation}\label{smallbz}
 \lim_{\e \ra 0} \limsup_{n\ra\infty} E_Q\left[ P_\w \left( \sum_{k=0}^{nT} \b_k \zeta_k \ind{\b_k \zeta_k < \e n^{1/\k} }\geq \d n^{1/\k} \right) \right] = 0, \quad \forall T<\infty, \, \d>0. 
\end{equation}
To see this, first note that the tail decay of $\b_1$ in \eqref{bnutails} and the fact that $\zeta_1$ is independent of $\b_1$ can be used to show that 
\[
 E_Q\left[ P_\w(\b_1 \zeta_1 \geq x) \right] \sim C_1 \Gamma(\k+1) x^{-\k}, \qquad \text{as } x\ra\infty.
\]
From this tail decay asymptotics, the proof of \eqref{smallbz} follows in the same way as the proof of \eqref{betatsum} above. 
This completes the proof of the convergence of the hitting times process $S_n(t)$, and thus also the proof of the Corollary. 
\end{proof}

\bibliographystyle{alpha}
\bibliography{RWRE}


\end{document}